
\documentclass{amsart}%
\usepackage{amsfonts,  bbm}
\usepackage{psfrag}
\usepackage{amsmath}
\usepackage{amsfonts}
\usepackage{amssymb}
\usepackage{graphicx}%
\setcounter{MaxMatrixCols}{30}
\newtheorem{theorem}{Theorem}[section]
\theoremstyle{plain}
\newtheorem{corollary}[theorem]{Corollary}
\newtheorem{lemma}[theorem]{Lemma}
\newtheorem{proposition}[theorem]{Proposition}
\theoremstyle{remark}
\newtheorem{remark}[theorem]{Remark}

\numberwithin{equation}{section}

\newcommand{\beq}{\begin{equation}}
\newcommand{\eeq}{\end{equation}}
\newcommand{\bea}{\begin{eqnarray}}
\newcommand{\eea}{\end{eqnarray}}

\begin{document}
\title[Resonances for manifolds hyperbolic near infinity]{Resonances for manifolds hyperbolic near infinity: Optimal Lower Bounds on Order
of Growth}
\author[D.\ Borthwick]{D.\ Borthwick}
\address{Department of Mathematics and Computer Science, Emory University, Atlanta,
Georgia 30322 USA}
\email{davidb@mathcs.emory.edu}
\author[T.\ J.\ Christiansen]{T.\ J.\ Christiansen}
\address{Department of Mathematics , University of Missouri, Columbia, Missouri 65211 USA}
\email{christiansent@missouri.edu}
\author[P.\ D.\ Hislop]{P.\ D.\ Hislop}
\address{Department of Mathematics, University of Kentucky, Lexington, Kentucky
40506-0027, USA}
\email{hislop@ms.uky.edu}
\author[P.\ A.\ Perry]{P.\ A.\ Perry}
\address{Department of Mathematics, University of Kentucky, Lexington, Kentucky
40506-0027, USA}
\email{perry@ms.uky.edu}
\thanks{DB supported in part by NSF grant DMS-0901937}
\thanks{TC supported in part by NSF grants DMS-0500267 and DMS-1001156.}
\thanks{PDH supported in part by NSF grant DMS-0803379.}
\thanks{PP supported in part by NSF grant DMS-0710477.}
\date{June 24, 2010}

\begin{abstract}
Suppose that $(X,g)$ is a conformally compact $(n+1)$-dimensional manifold
that is hyperbolic near infinity in the sense that the sectional curvatures of
$g$ are identically equal to minus one outside of a compact set $K\subset X$.
We prove that the counting function for the resolvent resonances has maximal
order of growth $(n+1)$ generically for such manifolds. This is achieved by
constructing explicit examples of manifolds hyperbolic at infinity for which
the resonance counting function obeys optimal lower bounds.

\end{abstract}
\maketitle
\tableofcontents

\section{Introduction}

\label{sec:intro1}

Resonances are poles of the resolvent for the Laplacian on a non-compact
manifold. Resonances are the natural analogue of the eigenvalues of the
Laplacian on a compact manifold: they are closely related to the classical
geodesic flow, and determine asymptotic behavior of solutions of the wave equation.

A fundamental object of interest is the resonance counting function, $N(r)$,
defined as the number of resonances (counted with appropriate multiplicity) in
a disc of radius $r$ about a chosen fixed point in the complex plane. Upper
bounds on the resonance counting function of the Laplacian on a Riemannian
manifold $(X,g)$ typically take the form $N(r)\leq Cr^{m}$ for large $r$,
where $m=\dim X$. Lower bounds on the resonance counting function (which imply
the existence of the resonances) are typically much harder to prove.

The purpose of this paper is to prove optimal lower bounds on the order of
growth of the resonance counting function for generic metrics in a class of
manifolds hyperbolic near infinity. Here the \emph{order of growth} of a
counting function $N(r)$ is defined to be
\begin{equation}
\rho=\underset{r\rightarrow\infty}{\lim\sup}\,\left(  \frac{\log N(r)}{\log
r}\right)  , \label{eq:order}%
\end{equation}
and we say that the resonance counting function of the Laplacian on a
Riemannian $(X,g)$ with $\dim X=m$ has \emph{maximal order of growth} if
$\rho=m$. If the resonance counting function does not have maximal order of
growth, we will say that $(X,g)$ is \emph{resonance-deficient}. We will prove
that, among compactly supported metric perturbations of a given metric $g_{0}$
in our class, the set of metrics whose resonance counting function has maximal
order of growth is a dense $G_{\delta}$ set or better; the precise formulation
is given in Theorem \ref{thm:main}.

In even dimensions, the nature of the singularity of the wave trace at zero
makes it easy to obtain generic lower bounds on the resonance counting
function. Hence the main challenge lies in the odd-dimensional case. Our work
here draws on two principle sources: first, Sj\"{o}strand and Zworski's
\cite{SZ:1993} construction of an asymptotically Euclidean metric whose
resonance counting function obeys a lower bound of the form $N(r)\geq Cr^{m}$,
and second, the techniques developed by Christiansen \cite{Christiansen:2005},
\cite{Christiansen:2007}, and Christiansen-Hislop \cite{CH:2005} to prove
lower bounds on the resonance counting function for generic potentials and metrics.

Sj\"{o}strand and Zworski constructed their example of an asymptotically
Euclidean metric with many resonances by gluing a large sphere onto Euclidean
space. They exploit the singularity of the wave trace for the Laplacian on the
sphere from its periodic geodesics, and show that this singularity persists
under gluing. Using the Poisson formula for resonances and a Tauberian
argument, they obtain lower bounds on the counting function.

Here we will use elementary propagation estimates for the wave equation
together with a Poisson formula due to Borthwick \cite{Borthwick:2008} to show
that this same gluing construction can be carried out perturbatively on a
large class of manifolds with nontrivial geometry and topology. This class
consists of conformally compact manifolds with constant curvature $-1$ in a
neighborhood of infinity, described in greater detail in what follows. Then,
we will use Christiansen's method to show that, generically within this class,
the counting functions have maximal order of growth.

Christiansen's method was developed in the context of Euclidean scattering. It
requires that the basic objects of scattering theory (the scattering operator
and scattering phase) remain well-behaved under complex perturbations of the
potential or metric, and also requires that at least one potential or metric
in the class has a resonance counting function with maximal order of growth.
Christiansen's method then shows that the same is true for a dense $G_{\delta
}$ set of metrics or potentials. Such results are \textquotedblleft best
possible\textquotedblright\ in the sense that there are known examples where
the resolvent is entire and there are \emph{no} resonances (see
\cite{Christiansen:2006} and see comments in what follows). One of our
contributions here is to provide a robust method for constructing such
examples which relies only on the existence of a \textquotedblleft
good\textquotedblright\ Poisson formula for resonances and elementary
propagation estimates on the wave operator which hold for any Riemannian manifold.

We now describe the geometric setting for our results in greater detail. Let
$\overline{X}$ \ be a compact manifold with boundary having dimension $m=n+1$,
and denote by $X$ the interior of $\overline{X}$. Suppose that $x$ is a
defining function for the boundary of $\overline{X}$, that is, a smooth
function on $\overline{X}$ with $x>0$ in $X$ which vanishes to first order on
$M=\partial\overline{X}$. \ Two such defining functions differ at most by a
smooth positive function that does not vanish at $\partial\overline{X}$. A
complete metric $g$ on $X$ with the property that $x^{2}g$ extends to a smooth
metric on $\overline{X}$ is called \emph{conformally compact}. As $x$ ranges
over admissible defining functions, the metrics
\[
h_{0}=\left.  x^{2}g\right\vert _{T^{\ast}\partial\overline{X}}%
\]
give $M$ a natural conformal structure. If $\left[  h_{0}\right]  $ denotes
the conformal class of $h_{0}$, the conformal manifold $(M,\left[
h_{0}\right]  )$ is called the \emph{conformal infinity} of $(X,g)$. A
motivating example is the case where $X$ is the quotient of real hyperbolic
$(n+1)$-dimensional space by a convex co-compact discrete group of isometries,
so that $X$ has infinite metric volume and no cuspidal ends.

A conformally compact manifold $(X,g)$ is called \emph{asymptotically
hyperbolic} if the sectional curvatures approach $-1$ as $x\downarrow0$, and
\emph{hyperbolic near infinity} if the sectional curvatures of $g$ are
identically $-1$ outside a compact subset $K$ of $X$. Finally, $(X,g)$ is
\emph{strongly hyperbolic near infinity} \ if the following slightly more
stringent condition holds: there is a compact subset $K$ of $X$, a convex
co-compact hyperbolic manifold $(X_{0},g_{0})$, and a compact subset $K_{0}$
of $X_{0}$ so that $(X-K,g)$ is isometric to $(X_{0}-K_{0},g_{0})$. We will
consider scattering theory and resonances for manifolds hyperbolic near infinity.

We recall some fundamental results in the spectral and scattering theory for
asymptotically hyperbolic manifolds. See the papers of Mazzeo-Melrose
\cite{MM:1987}, Joshi-Sa Barreto \cite{JsB:2000,JsB:2001} for spectral and
scattering on asymptotically hyperbolic manifolds, see the papers of
Guillop\'{e}-Zworski \cite{GZ:1995,GZ:1997,GZ:1999} for spectral and
scattering on manifolds hyperbolic near infinity, and see the papers of
Graham-Zworski \cite{GZ} and Guillarmou
\cite{Guillarmou:2005,Guillarmou:2005c} for further results on scattering
resonances and resolvent resonances. A survey and further references can be
found in \cite{Perry:2007}.

If $\left(  X,g\right)  $ is hyperbolic near infinity, the positive Laplacian
$\Delta_{g}$ on $X$ has at most finitely many discrete eigenvalues and
continuous spectrum in $[n^{2}/4,\infty)$. The resolvent%
\begin{equation}
R_{g}(s)=\left(  \Delta_{g}-s(n-s)\right)  ^{-1}, \label{eq:resolvent1}%
\end{equation}
initially defined for $\Re(s)>n/2$, extends to a meromorphic family of
operators mapping $\mathcal{C}_{0}^{\infty}(X)$ into $\mathcal{C}^{\infty}%
(X)$. The singularities of the meromorphically continued resolvent (excepting
essential singularities)\ are called \emph{resolvent resonances}. At each
resolvent resonance $\zeta$ the resolvent has a Laurent series with finite
polar part whose coefficients are finite-rank operators. If $\left(
X,g\right)  $ is hyperbolic near infinity, the resolvent has no essential
singularities, as the construction in \cite{GZ:1995} shows. The multiplicity
of a resolvent resonance $\zeta$ is given by%
\begin{equation}
m_{g}(\zeta)=\operatorname*{rank}~\operatorname*{Res}_{s=\zeta}~R_{g}(s).
\label{eq:multiplicity1}%
\end{equation}
Note that there may be finitely many poles $\zeta$ with $\Re(\zeta)>n/2$
corresponding to the finitely many eigenvalues $\lambda=\zeta(n-\zeta)$ of
$\Delta_{g}$. We denote by $\mathcal{R}_{g}$ the resolvent resonances of
$\Delta_{g}$, counted with multiplicity.

Our interest lies in the asymptotic behavior of the counting function for
resolvent resonances:%
\begin{equation}
N_{g}(r)=\#\left\{  \zeta\in\mathcal{R}_{g}:\left\vert \zeta-n/2\right\vert
\leq r\right\}  . \label{eq:countingfunction1}%
\end{equation}
Optimal upper bounds of the form $N_{g}(r)\leq Cr^{n+1}$ were proven by
Cuevas-Vodev \cite{CV:2003} and Borthwick \cite{Borthwick:2008}, but, for
reasons that we will explain, optimal lower bounds for resolvent resonances
are more difficult to obtain. In the case $n=1$, Guillop\'{e} and Zworski
proved sharp upper \cite{GZ:1997} and lower \cite{GZ:1999} bounds.

We will study the distribution of resolvent resonances using the Poisson
formula for resonances obtained by Guillop\'{e} and Zworski for $n=1$ in
\cite{GZ:1997} and in the present setting by the first author in
\cite{Borthwick:2008}. To state it, we recall the $0$-trace, a regularization
introduced by Guillop\'{e} and Zworski \cite{GZ:1997} and inspired by the
$b$-integral of Melrose \cite{Melrose:1993}. First, the $0$-integral of a
function $f\in\mathcal{C}^{\infty}(X)$, polyhomogeneous in $x$ as
$x\downarrow0$, is defined to be
\[
\int^{0}~f~dg=\operatorname*{FP}_{\varepsilon\downarrow0}\int_{x>\varepsilon
}f~dg,
\]
and for an operator $A$ with smooth kernel we define the $0$-trace to be the
$0$-integral of the kernel of $A$ on the diagonal. The $0$-volume of $\left(
X,g\right)  $, denoted $%
\operatorname{0-Vol}%
(X,g)$, is simply $\int^{0}~dg$ and is known to be independent of the choice
of $x$ if the dimension of $X$ is even. In \cite{Borthwick:2008}, Borthwick
proved that if $(X,g)$ is strongly hyperbolic near infinity, then%
\begin{equation}%
\operatorname{0-Tr}%
\cos(t\sqrt{\Delta_{g}-n^{2}/4})=\sum_{\zeta\in\mathcal{R}_{g}^{\mathrm{sc}}%
}e^{(\zeta-n/2)\left\vert t\right\vert }-A(X)\frac{\cosh(\left\vert
t\right\vert /2)}{2\left(  \sinh\left(  \left\vert t\right\vert /2\right)
\right)  ^{n+1}} \label{eq:wave0trace1}%
\end{equation}
where
\begin{equation}
A(X)=\left\{
\begin{array}
[c]{ccl}%
0, &  & n\text{ odd,}\\
&  & \\
\left\vert \chi(\overline{X})\right\vert , &  & n\text{ even}%
\end{array}
\right.  \label{eq:AX}%
\end{equation}
and the left-hand side is a distribution on $\mathbb{R}\backslash\left\{
0\right\}  $, where $\chi(\overline{X})$ is the Euler characteristic of
$\overline{X}$ viewed as a compact manifold with boundary. The set
$\mathcal{R}_{g}^{\mathrm{sc}}$ is the set of \emph{scattering resonances} of
$\Delta_{g}$, a set which contains the resolvent resonances but also contains
new singularities which arise owing to the conformal infinity. The scattering
resonances are singularities of the scattering operator for $\Delta_{g}$,
which we now describe.

Fix a defining function $x$ for $\partial\overline{X}$ and consider the
Dirichlet problem for given $s\in\mathbb{C}$ and $f\in\mathcal{C}^{\infty}%
(M)$:%
\begin{align}
\left(  \Delta_{g}-s(n-s)\right)  u  &  =0\label{eq:dirichlet1}\\
u  &  =x^{n-s}F+x^{s}G\nonumber\\
\left.  F\right\vert _{\partial\overline{X}}  &  =f.\nonumber
\end{align}
Here, the functions $F$ and $G$ are restrictions to $X$ of smooth functions on
$\overline{X}$. The\ Dirichlet problem (\ref{eq:dirichlet1}) has a unique
solution if $\Re(s)=n/2$, $s\neq n/2$, so that for such $s$ the map%
\begin{align}
S_{g}(s)  &  :\mathcal{C}^{\infty}\left(  \partial\overline{X}\right)
\rightarrow\mathcal{C}^{\infty}(\partial\overline{X})\label{eq:s-matrix1}\\
f  &  \mapsto\left.  G\right\vert _{\partial\overline{X}}\nonumber
\end{align}
is well-defined and unitary. The scattering operator extends to a meromorphic
operator-valued function of $s$, but with poles whose residues have infinite
rank. If we renormalize and set%
\begin{equation}
\widetilde{S}_{g}(s)=\frac{\Gamma(s-n/2)}{\Gamma(n/2-s)}S_{g}(s),
\label{eq:s-matrix2}%
\end{equation}
the poles with infinite-rank residues are removed and all poles of
$\widetilde{S}_{g}(s)$ have finite-rank residues. Poles of $\widetilde{S}%
_{g}(s)$ are called \emph{scattering resonances}, and the multiplicity of a
scattering resonance $\zeta$ is given by
\begin{equation}
\nu_{g}(\zeta)=-\operatorname*{tr}~\operatorname*{Res}_{s=\zeta}\left[
\widetilde{S}_{g}^{\prime}(s)\widetilde{S}_{g}(n-s)\right]  .
\label{eq:multiplicity2}%
\end{equation}
We denote by $\mathcal{R}_{g}^{\mathrm{sc}}$ the set of scattering resonances
for $\Delta_{g}$, counted with multiplicity and we denote by $N_{g}%
^{\mathrm{sc}}(r)$ the counting function analogous to
(\ref{eq:countingfunction1}):%
\[
N_{g}^{\mathrm{sc}}(r)=\#\left\{  \zeta\in\mathcal{R}_{g}^{\mathrm{sc}%
}:\left\vert \zeta-n/2\right\vert \leq r\right\}  .
\]
It is the multiplicities of the scattering resonances that enter into the
Poisson formula (\ref{eq:wave0trace1}).

If $(X,g)$ is strongly hyperbolic near infinity, it is shown in
\cite{Borthwick:2008} that the following lower bounds, which take different
forms depending on whether $\dim(X)$ is even or odd, hold. If $\dim(X)$ is
even (i.e., \thinspace$n$ is odd), one has
\begin{equation}
N_{g}^{\mathrm{sc}}(r)\geq c\left\vert
\operatorname{0-Vol}%
(X,g)\right\vert r^{n+1} \label{eq:lb-n-odd}%
\end{equation}
for some $c>0$ and $r$ large (this result was already proved by
Guillop\'{e}-Zworksi in case $n=1$, where $N_{g}(r)=N_{g}^{\mathrm{sc}}(r)$).
On the other hand, if $\dim(X)$ is odd (i.e., $n$ is even), the lower bound
takes the form%
\begin{equation}
N_{g}^{\mathrm{sc}}(r)\geq c\left\vert \chi(\overline{X})\right\vert r^{n+1}
\label{eq:lb-n-even}%
\end{equation}
where $c>0$, $r$ is sufficiently large. Although we consider the more general
case of manifolds hyperbolic near infinity (i.e., dropping the
\textquotedblleft strongly\textquotedblright), this dichotomy will play an
important role in our work.

The scattering resonances include both resolvent resonances and an additional
set of singularities related to the conformal infinity. These singularities
occur at $s=n/2+k$ for $k=1,2,\cdots$; at these points, the residue of the
scattering operator $S_{g}(s)$ is an elliptic operator $P_{k}$ on $M$ with
kernel having finite dimension $d_{k}$. The operators $P_{k}$ are the GJMS
operators \cite{GJMS} associated to the conformal infinity: their connection
to scattering theory was elucidated by Graham and Zworski \cite{GZ}.

The precise relation between the respective multiplicities
(\ref{eq:multiplicity1}) and (\ref{eq:multiplicity2}) for resolvent resonances
and scattering resonances was partially established Guillop\'{e}-Zworski
($n=1$) and Borthwick-Perry ($n\geq1$) \cite{BP:2002}, and completed by
Guillarmou \cite{Guillarmou:2005}):%
\begin{equation}
\nu_{g}(\zeta)=m_{g}(\zeta)-m_{g}(n-\zeta)+\sum_{k\in\mathbb{N}}\left(
\mathbf{1}_{n/2-k}(\zeta)-\mathbf{1}_{n/2+k}(\zeta)\right)  d_{k}.
\label{eq:multiplicity3}%
\end{equation}
Here $\mathbf{1}_{t}(s)=1$ when $s=t$ and is zero elsewhere. This shows that
the difference between the counting functions for resolvent resonances and the
counting function for scattering resonances comes from two sources: first, the
finitely many $\zeta$ for which $n-\zeta$ corresponds to an eigenvalue of
$\Delta_{g}$ and second, the numbers $d_{k}$. If we let $\mathcal{R}%
_{g}^{\mathrm{GZ}}$ be the set $\left\{  n/2-k:k\in\mathbb{N}\right\}  $
assigning multiplicity $d_{k}$ to $\zeta=n/2-k$, and
\[
N_{g}^{\mathrm{GZ}}(r)=\#\left\{  \zeta\in\mathcal{R}_{g}^{\mathrm{GZ}%
}:\left\vert \zeta-n/2\right\vert \leq r\right\}  ,
\]
we have $N_{g}^{\mathrm{sc}}(r)=N_{g}^{\mathrm{GZ}}(r)+N_{g}(r)$ up to a
finite error which does not affect upper and lower bounds for large $r$ (this
was first pointed out in the literature by Guillarmou and Naud \cite{GN:2006}%
). Thus, in general, $N_{g}(r)\leq N_{g}^{\mathrm{sc}}(r)$, so that lower
bounds on $N_{g}^{\mathrm{sc}}(r)$ do not imply lower bounds on $N_{g}(r)$.

On the one hand, it is reasonable to expect that the counting function
$N_{g}(r)$, which is arguably a more natural counting function, obeys similar
bounds. On the other hand, there are known examples where $N_{g}^{\mathrm{GZ}%
}(r)$ saturates the lower bound (see also the remarks following Theorem 1.3 in
\cite{Borthwick:2008}); indeed, if $X=\mathbb{H}^{n+1}$, real hyperbolic
$(n+1)$-dimensional space, and $n$ is even, then $N_{g}(r)=0$! (see
Guillarmou-Naud \cite{GN:2006} for further discussion). For this reason, one
can only expect optimal lower bounds to hold in a \textquotedblleft
generic\textquotedblright\ sense.

We will say that the counting function $N_{g}(r)$ has \emph{maximal order of
growth} if $\rho=n+1$, in correspondence to the known upper bounds. If
$N_{g}(r)$ does not have maximal order of growth we will say that $g$ is
\emph{resonance-deficient}. Our main result says that the counting function
$N_{g}(r)$ has maximal order of growth for generic metrics in the following
sense. Let us fix a manifold $\left(  X,g_{0}\right)  \,$, assumed hyperbolic
near infinity, and a compact subset $K$ of $X$. Let $\mathcal{G}(g_{0},K)$ be
the set of metrics $g$ with $g=g_{0}$ outside $K$, and let $\mathcal{M(}%
g_{0},K)$ be the subset of $\mathcal{G}(g_{0},K)$ consisting of metrics for
which $N_{g}(r)$ has maximal order of growth. Viewing metrics as sections of
$\mathcal{C}^{\infty}(T^{\ast}X\otimes T^{\ast}X)$, we topologize these sets
with the $\mathcal{C}^{\infty}$ topology. This topology is compatible with
norm resolvent convergence for the corresponding Laplacians.

\begin{theorem}
\label{thm:main}Suppose that $\left(  X,g_{0}\right)  $ is hyperbolic near
infinity, and $K$ is a compact subset of $X$. Then:\newline(i) If $n$ is odd,
$\mathcal{M}(g_{0},K)$ contains an open dense subset of $\mathcal{G}(g_{0}%
,K)$.\newline(ii) If $n$ is even, $\mathcal{M}(g_{0},K)$ is a dense
$\mathcal{G}_{\delta}$ set in $\mathcal{G}(g_{0},K)$.
\end{theorem}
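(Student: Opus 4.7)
The plan is to establish the theorem in two phases: an existence phase, producing at least one metric in $\mathcal{G}(g_0, K')$ (for a possibly enlarged compact $K' \supset K$) whose resolvent counting function attains the maximal order $n+1$, and a genericity phase, which then spreads this property throughout $\mathcal{G}(g_0, K)$ via an analytic-family argument of Christiansen.

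For part (i), with $n$ odd so that $\dim X$ is even, the Poisson formula \eqref{eq:wave0trace1} already carries strong information at $t\to 0$: the bound \eqref{eq:lb-n-odd} gives $N_g^{\mathrm{sc}}(r)\ge c\,|\!\operatorname{0-Vol}(X,g)|\,r^{n+1}$. Since $g=g_0$ outside $K$, the conformal infinity and thus every GJMS operator $P_k$ and its kernel dimension $d_k$ are independent of $g\in\mathcal{G}(g_0,K)$, so $N_g^{\mathrm{GZ}}(r)\equiv N^{\mathrm{GZ}}(r)$ is a fixed function. By \eqref{eq:multiplicity3} one obtains
\[
N_g(r)\;\ge\;N_g^{\mathrm{sc}}(r)-N^{\mathrm{GZ}}(r)-C_0 .
\]
The map $g\mapsto \operatorname{0-Vol}(X,g)$ is $C^\infty$-continuous, so the set on which this lower bound forces maximal growth is open, and density follows from the construction in Step~1 below, which can be inserted into any neighborhood of any $g$.

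For part (ii), with $n$ even, $A(X)=|\chi(\bar X)|$ can vanish and \eqref{eq:lb-n-even} can degenerate, so a direct construction is needed. In Step~1 we produce $g_\star\in\mathcal{G}(g_0,K^\star)$ with $N_{g_\star}(r)$ of order $n+1$ by the Sjöstrand--Zworski sphere gluing: inside a small ball $B\subset X$ we replace $g_0$ by a metric isometric to a round $(n+1)$-sphere of radius $R$ minus a spherical cap, smoothly interpolated in a thin annular transition region. The glued sphere carries periodic geodesics of lengths $\ell\in 2\pi R\,\mathbb{Z}_+$; by finite propagation speed these do not reach $\partial\bar X$ or the transition region, so the singularities they produce in the wave trace of $\Delta_{g_\star}-n^2/4$ agree (modulo smoothing) with those on $\mathbb{S}^{n+1}_R$, and persist as singularities of the $0$-trace in \eqref{eq:wave0trace1} because they occur at times $t\ne 0$ where no regularization is necessary. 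Matching these singularities to the right-hand side of \eqref{eq:wave0trace1} and applying a Tauberian argument as in \cite{SZ:1993} yields $N_{g_\star}^{\mathrm{sc}}(r)\ge c\,r^{n+1}$. Since $N^{\mathrm{GZ}}(r)$ depends only on the fixed conformal infinity and is bounded above by the universal $O(r^{n+1})$ scattering upper bound for the model hyperbolic end, choosing $R$ large enough makes $c$ exceed $\limsup N^{\mathrm{GZ}}(r)/r^{n+1}$, and \eqref{eq:multiplicity3} then upgrades the bound to $N_{g_\star}(r)\gtrsim r^{n+1}$.

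Step~2 is the Christiansen genericity argument. Given any $g\in\mathcal{G}(g_0,K)$ and any compactly supported symmetric $2$-tensor $V$, the family $g_z=g+zV$ (for $z\in\mathbb{C}$ in a neighborhood of $[0,1]\subset\mathbb{R}$, keeping $g_z$ Riemannian for real $z$) gives rise to a holomorphic family of resolvents $R_{g_z}(s)$ and renormalized scattering operators $\widetilde{S}_{g_z}(s)$. By the methods of \cite{Christiansen:2005,Christiansen:2007,CH:2005}, the order of $N_{g_z}(r)$ is the same at every $z$ in the family outside a polar set, and if maximality holds at one point of a connected complex parameter space, it holds on a dense $G_\delta$ there. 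Writing
\[
\mathcal{M}(g_0,K)=\bigcap_{N\ge 1}\bigl\{\,g\in\mathcal{G}(g_0,K):N_g(r)\ge r^{n+1}/N\ \text{for some } r>N\,\bigr\},
\]
each of the defining sets is open by norm-resolvent continuity of $g\mapsto \Delta_g$, exhibiting $\mathcal{M}(g_0,K)$ as a $G_\delta$. Combining with Step~1 (used to connect any prescribed $g$ to a maximal-growth metric $g_\star$ through a finite chain of such analytic families, if necessary) gives density, completing (ii); in case (i) the $0$-Vol estimate makes each defining set open dense directly, which yields the open dense conclusion.

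The principal obstacle is Step~1 in the $n$ even case: one must show that the spherical wave-trace singularities survive the gluing as singularities of the Guillopé--Zworski $0$-trace on a conformally compact manifold, and that the resulting lower bound on $N^{\mathrm{sc}}$ transfers to $N$ after subtracting the fixed $N^{\mathrm{GZ}}$. This rests on sharp finite-propagation-speed estimates for the wave kernel of $\Delta_g$ away from $\partial\bar X$, a Tauberian lower bound adapted to the Poisson formula \eqref{eq:wave0trace1} (which has both oscillatory and hyperbolic heat-kernel contributions on the right), and sufficiently tight quantitative control on $N^{\mathrm{GZ}}(r)$ in terms of the geometry of the conformal infinity.
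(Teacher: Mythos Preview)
Your plan has the right large-scale architecture (existence via sphere gluing, then Christiansen's method), but there are two genuine gaps, and the paper resolves both by mechanisms you do not invoke.

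\textbf{The $N^{\mathrm{GZ}}$ subtraction does not work.} In both (i) and your Step~1 for (ii) you pass from $N_g^{\mathrm{sc}}$ to $N_g$ via $N_g(r)\ge N_g^{\mathrm{sc}}(r)-N^{\mathrm{GZ}}(r)-C_0$. But the paper emphasizes that $N^{\mathrm{GZ}}$ can itself have order $n+1$ and can even saturate the lower bound (e.g.\ $X=\mathbb{H}^{n+1}$, $n$ even, where $N_g\equiv 0$); you supply no quantitative control on $\limsup N^{\mathrm{GZ}}(r)/r^{n+1}$, so the subtraction yields nothing. Moreover, \eqref{eq:lb-n-odd} is stated for metrics \emph{strongly} hyperbolic near infinity, not the weaker hypothesis here. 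The paper's remedy is a \emph{relative} Poisson formula (Theorem~\ref{rel.poisson}): for two metrics agreeing outside $K$, the difference of wave $0$-traces equals a sum over \emph{resolvent} resonances only---the $d_k$ terms cancel identically because they depend solely on the common conformal infinity. For $n$ odd this gives directly $N_{g_0}(r)+N_{g_1}(r)\ge c\,|\operatorname{Vol}(K,g_1)-\operatorname{Vol}(K,g_0)|\,r^{n+1}$ (Corollary~\ref{cor:hominis}), so resonance-deficient metrics lie on a single level set of the $0$-volume, which is closed and nowhere dense---this is part~(i). For the sphere-gluing construction (Section~\ref{sec:tumor}) one first assumes $g_0$ resonance-deficient, so the $g_0$ contribution to the relative trace is $o(\lambda^n)$, and the Tauberian theorem applied to $u_1-u_0$ yields the lower bound on $N_{g_1}$ with no $N^{\mathrm{GZ}}$ to remove.

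\textbf{Your $G_\delta$ characterization is incorrect.} The identity
\[
\mathcal{M}(g_0,K)=\bigcap_{N\ge 1}\bigl\{g:\ \exists\,r>N,\ N_g(r)\ge r^{n+1}/N\bigr\}
\]
fails in both directions. For instance, if $N_g(r)\sim 2r^{n}$ then the order is $n<n+1$, yet for every $N$ the choice $r=N+1$ gives $N_g(r)\approx 2N^{n}\ge (N+1)^{n+1}/N\approx N^{n}$, so $g$ lies in every set on the right. Thus the right-hand side strictly contains $\mathcal{M}(g_0,K)$. Even granting a corrected version, ``openness by norm-resolvent continuity'' is asserted, not proved: continuity of resonances does not immediately give lower semicontinuity of $r\mapsto N_g(r)$ at a prescribed radius. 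The paper instead shows the \emph{complement} is $F_\sigma$: it introduces $h_g(r)$ built from the relative scattering determinant $\sigma_{g,g_0}$, proves (Lemma~\ref{l:ordercomp}) that $h_g$ and $N_g$ have the same order once $g_0$ is fixed to be resonance-deficient, and shows that the sets $\{g:\ h_g(r)\le M(1+r^q)\ \text{for }0\le r\le j\}$ are closed via trace-norm convergence of $S_gS_{g_0}^{-1}$. A countable union over $(M,l,m)$ then exhibits $\mathcal{G}(g_0,K)\setminus\mathcal{M}(g_0,K)$ as $F_\sigma$.
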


\begin{remark}
If $n=1$, it is known that $N_{g}^{\mathrm{GZ}}(r)=0$ so that $N_{g}%
^{\mathrm{sc}}(r)=N_{g}(r)$ and $\mathcal{M}(g_{0},K)=\mathcal{G}(g_{0},K)$
for any $K\subset X$; see \cite{GZ:1997} and \cite[section 8.5]{B:2007}.
\end{remark}

\begin{remark}
Theorem \ref{thm:main} gives a precise meaning to our assertion that optimal
lower bounds hold for ``generic'' metrics.
\end{remark}

\begin{remark}
For $n$ odd, we actually prove a stronger statement, that resonance-deficient
metrics can occur for at most one value of the zero-volume.
\end{remark}

A key observation is that compact metric perturbations leave $N_{g}%
^{\mathrm{GZ}}(r)$ unchanged since these resonances depend only on the
conformal infinity of $(X,g)$; thus it is natural to study the relative wave
trace for the perturbed and unperturbed metrics.

The contents of this paper are as follows. In section \ref{sec:complexmetric1}%
, we consider a family of complexified metrics
\[
g_{z}=(1-z)g_{0}+zg_{1}%
\]
for $z$ in a small complex neighborhood of $\left[  0,1\right]  $. Since this
is not a family of Riemannian metrics, we study the analog of the Laplacian
for $g_{z}$ and its scattering operator$\frac{{}}{{}}$. We then consider the
relative wave trace between $g_{0}$ and a compactly supported perturbation
$g_{1}$ in section \ref{sec:relwavetrace1}, and prove the first part of
Theorem \ref{thm:main}. Next, in section \ref{sec:tumor}, we construct a
compactly supported metric perturbation $g_{1}$ of $g_{0}$ obeying the optimal
lower bound. Finally, in \ref{sec:generic1}, we extend the methods of
\cite{Christiansen:2007} to prove the second part of Theorem \ref{thm:main}.

\section{Interpolated Laplacian and relative scattering matrix}

\label{sec:complexmetric1}

Let $(X,g_{0})$ be conformally compact and hyperbolic near infinity, and
$g_{1}$ another metric on $X$ that agrees with $g_{0}$ outside some compact
set $K\subset X$. For $z$ in the rectangular region,
\begin{equation}
\Omega_{\varepsilon}:=[-\varepsilon,1+\varepsilon]\times i[-\varepsilon
,\varepsilon], \label{eq:compldomain1}%
\end{equation}
we define a bilinear form interpolating between the two metrics by
\begin{equation}
g_{z}=(1-z)g_{0}+zg_{1}. \label{eq:complex1}%
\end{equation}
Let $P_{g_{z}}$ be the \textquotedblleft Laplacian\textquotedblright%
\ associated to $g_{z}$ in the formal sense,
\[
P_{g_{z}}:=-\frac{1}{\sqrt{\det g_{z}}}\partial_{j}[\sqrt{\det g_{z}}%
(g_{z})^{jk}]\>\partial_{k}.
\]
Assuming that $\varepsilon$ is sufficiently small, $\det g_{z}$ will lie
within the natural branch of the square root, and the coefficients of
$P_{g_{z}}$ will be analytic in $z$. With $z=a+ib$ for $a,b\in\mathbb{R}$, we
regard $P_{g_{z}}$ as an unbounded operator on $L^{2}(X,dg_{a})$.

The goal of this section is to define an operator $S_{g_{z}}(s)$ as the
scattering matrix associated to $P_{g_{z}}$. Since $P_{g_{z}}$ is not
self-adjoint, various facts need to be checked.


\subsection{Analytic continuation of the resolvent of $P_{g_{z}}$}

\label{subsec:analycont1}

We first prove that the resolvent of $P_{g_{z}}$, written as $(P_{g_{z}} -
s(n-s))^{-1}$, admits an analytic continuation in $s$.

\begin{lemma}
\label{presolvent.est} Assuming $\varepsilon$ is sufficiently small, there
exist $a_{\varepsilon}, C_{\varepsilon}$ independent of $z$, such that for
$\Re s > a_{\varepsilon}\geq n$, the operator $P_{g_{z}} - s(n-s)$ is
invertible and the inverse satisfies
\[
\left\Vert (P_{g_{z}} - s(n-s))^{-1} \right\Vert _{L^{2}(X, dg_{a})} \le
\frac{C_{\varepsilon}}{\Re(s)}.
\]

\end{lemma}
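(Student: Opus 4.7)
I would compare $P_{g_{z}}$ to the Laplacian of a nearby \emph{genuine} Riemannian metric and then exploit self-adjointness of the latter via a form-perturbation argument.

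For $z = a+ib \in \Omega_{\varepsilon}$ set $a_{0} := \max\{0,\min\{1,a\}\}\in[0,1]$, so that $|z-a_{0}|\le 2\varepsilon$. Since $a_{0}\in[0,1]$, the convex combination $g_{a_{0}} = (1-a_{0})g_{0} + a_{0}g_{1}$ is a genuine Riemannian metric on $X$; because $g_{1}=g_{0}$ outside $K$, so does $g_{a_{0}}$, and hence $g_{a_{0}}$ is complete and hyperbolic near infinity. Therefore $P_{g_{a_{0}}}$ is essentially self-adjoint and non-negative on $L^{2}(X,dg_{a_{0}})$, and the spectral theorem yields
\[
\|(P_{g_{a_{0}}} - s(n-s))^{-1}\|_{L^{2}(dg_{a_{0}})} \le \frac{1}{\operatorname{dist}(s(n-s),[0,\infty))}.
\]
Writing $\sigma=\Re s$, $\tau=\Im s$, one has $\Re[s(n-s)] = -\sigma(\sigma-n)+\tau^{2}$ and $\Im[s(n-s)] = \tau(n-2\sigma)$; splitting into the cases $|\tau|\le\sigma/2$ and $|\tau|>\sigma/2$, an elementary computation gives $\operatorname{dist}(s(n-s),[0,\infty)) \ge c\sigma$ for $\sigma\ge 2n$, which is the claimed $C/\Re s$ bound for the reference operator. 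Since $dg_{a}/dg_{a_{0}} = 1+O(\varepsilon)$, the spaces $L^{2}(dg_{a})$ and $L^{2}(dg_{a_{0}})$ have equivalent norms uniformly in $z\in\Omega_{\varepsilon}$.

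Next, decompose $P_{g_{z}} = P_{g_{a_{0}}} + W_{z}$, where $W_{z}$ is a second-order differential operator whose coefficients are $C^{\infty}$ and compactly supported in $K$, analytic in $z$, and vanish at $z=a_{0}$, so that their sup-norms are $O(\varepsilon)$ uniformly on $\Omega_{\varepsilon}$. A naive Neumann series in $W_{z}(P_{g_{a_{0}}}-s(n-s))^{-1}$ does \emph{not} converge, because $W_{z}$ is second-order and the relevant $L^{2}\to H^{2}_{\mathrm{loc}}(K)$ bound for the reference resolvent grows with $\sigma$. Instead, I would pass to sesquilinear forms on $L^{2}(dg_{a})$. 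Integration by parts identifies $\tilde q_{z}(u,v):=(P_{g_{z}}u,v)_{L^{2}(dg_{a})}$ with a closed form whose principal part is $\int_{X}(g_{z})^{jk}\partial_{k}u\,\overline{\partial_{j}v}\,dg_{a}$, plus a first-order remainder supported in $K$ with coefficients of size $O(\varepsilon)$. Decomposing $\tilde q_{z} = \tilde q_{a_{0}}+r_{z}$, the form $\tilde q_{a_{0}}$ is Hermitian and non-negative up to the density mismatch $1+O(\varepsilon)$, while $r_{z}$ is a bounded form on $H^{1}(X)$ of norm $O(\varepsilon)$.

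Combining these with the identities for $\Re[s(n-s)]$ and $\Im[s(n-s)]$, one obtains, for $\varepsilon$ sufficiently small and $\Re s\ge a_{\varepsilon}$ sufficiently large, a coercive estimate for the shifted form $\tilde q_{z}(\cdot,\cdot) - s(n-s)(\cdot,\cdot)_{L^{2}(dg_{a})}$ on $H^{1}(X)$, uniformly in $z\in\Omega_{\varepsilon}$. Lax--Milgram (equivalently, the sectorial-form resolvent theory of Kato, Chap.~VI) then produces the inverse $(P_{g_{z}}-s(n-s))^{-1}$ on $L^{2}(dg_{a})$ with the stated bound $C_{\varepsilon}/\Re s$. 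The principal difficulty is the second-order nature of $W_{z}$, which rules out naive operator-perturbation of the reference resolvent and forces the form-theoretic route; the argument succeeds because $W_{z}$ is small relative to the principal symbol of $P_{g_{a_{0}}}$ itself, uniformly in $s$.
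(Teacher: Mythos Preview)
The paper's proof is precisely the Neumann-series argument you dismiss: it writes
\[
(P_{g_{z}}-s(n-s))R_{g_{a}}(s)=I+(P_{g_{z}}-P_{g_{a}})R_{g_{a}}(s),
\]
observes that $P_{g_{z}}-P_{g_{a}}$ is a compactly supported second-order operator with $O(\varepsilon)$ coefficients and $R_{g_{a}}(s)$ has order $-2$, and concludes that the error has norm $\le\tfrac12$ for $\Re s>a_{\varepsilon}$ once $\varepsilon$ is small. So your proposal diverges from the paper exactly at the point where you assert this route fails.

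Your form-theoretic replacement, however, has the very same gap you flag in the Neumann series. Lax--Milgram on $H^{1}$ requires a uniform lower bound on $|\tilde q_{z}(u,u)-s(n-s)\|u\|^{2}|/\|u\|_{H^{1}}^{2}$. Write $\lambda=s(n-s)$; for $\sigma=\Re s$ fixed and $\tau=\Im s\to\infty$ one has $\Re\lambda\sim\tau^{2}$ and $|\Im\lambda|\sim 2\sigma|\tau|$, so $\lambda$ approaches the positive real axis at angle $\sim 2\sigma/|\tau|$. Once this angle drops below the $O(\varepsilon)$ half-aperture of the numerical range of $\tilde q_{z}$ (i.e.\ once $|\tau|\gtrsim\sigma/\varepsilon$), $\lambda$ lies \emph{inside} that numerical range and no coercivity is available. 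Equivalently, the $H^{-1}\to H^{1}$ norm of the reference resolvent is $\sup_{t\ge0}(t+1)/|t-\lambda|\sim|\tau|/(2\sigma)$ in this regime, so a form perturbation $r_{z}$ of relative size $\varepsilon$ cannot be absorbed uniformly in $s$. Your closing claim---that the argument succeeds because $W_{z}$ is small relative to the principal symbol of $P_{g_{a_{0}}}$---is exactly the relative-boundedness statement that both the operator-Neumann-series and the form approach are trying to exploit, and it is this uniformity in $s$ that is at issue in either framework. The sectorial/Kato machinery you invoke does not supply it.
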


\begin{proof}
Since $P_{g_{a}}=\Delta_{g_{a}}$, the Laplacian of an actual metric $g_{a}$,
$R_{g_{a}}(s)$ is analytic for $\Re(s)>n$. Consider the simple identity
\begin{equation}
(P_{g_{z}}-s(n-s))R_{g_{a}}(s)=I+(P_{g_{z}}-P_{g_{a}})R_{g_{a}}(s).
\label{pzpa}%
\end{equation}
Since $P_{g_{z}}-P_{g_{a}}$ is a compactly supported second order differential
operator and $R_{g_{a}}(s)$ has order $-2$, the operator norm of $(P_{g_{z}%
}-P_{g_{a}})R_{g_{a}}(s)$ may be estimated for all $\Re(s)$ sufficiently large
by the supremum of the coefficients of $P_{g_{z}}-P_{g_{a}}$. These
coefficients are clearly $O(\varepsilon)$, so by choosing $\varepsilon$ small
we may assume
\[
\left\Vert (P_{g_{z}}-P_{g_{a}})R_{g_{a}}(s)\right\Vert \leq\tfrac{1}{2}\text{
for all }\Re(s)>a_{\varepsilon}.
\]
This shows that the right side of (\ref{pzpa}) is invertible, and hence that
$P_{g_{z}}-s(n-s)$ is invertible. The norm estimate on the inverse then
follows immediately from the Neumann series estimate,
\[%
\begin{split}
\left\Vert \lbrack I+(P_{g_{z}}-P_{g_{a}})R_{g_{a}}(s)]^{-1}\right\Vert  &
\leq\sum_{l=0}^{\infty}\left\Vert (P_{g_{z}}-P_{g_{a}})R_{g_{a}}(s)\right\Vert
^{l}\\
&  \leq2\quad\text{for }\Re(s)>a_{\varepsilon},
\end{split}
\]
and the standard resolvent estimate on $R_{g_{a}}(s)$, which for $\Re(s)\geq
n$ gives
\[
\left\Vert R_{g_{a}}(s)\right\Vert \leq\frac{1}{|s(n-s)|}.
\]

\end{proof}

Since $P_{g_{z}}$ agrees with $\Delta_{g_{0}}$ outside $K$, Lemma
\ref{presolvent.est} leads almost immediately to a proof of analytic
continuation of the resolvent of $P_{g_{z}}$. Recall that $x$ is a boundary
defining function for the boundary $\partial\overline{X}$, and let
$\mathcal{B}_{N}$ denote the bounded operators from $x^{N} L^{2}(X, dg_{a})
\to x^{-N} L^{2}(X, dg_{a})$.

\begin{proposition}
\label{pcontinue} The resolvent $R_{g_{z}}(s):=(P_{g_{z}}-s(n-s))^{-1}$, which
by Lemma \ref{presolvent.est} is defined for $z\in\Omega_{\varepsilon}$ and
$\Re(s)>a_{\varepsilon}$, admits for any $N>0$ a finitely meromorphic
continuation as a $\mathcal{B}_{N}$-valued function of $s$ to the region
$\Re(s)>-N+\tfrac{n}{2}$. For $(z,s)\in\Omega_{\varepsilon}\times(\Re
(s)>n/2)$, $R_{z}(s)$ is meromorphic in two variables as a $\mathcal{B}_{N}$
operator-valued function.
\end{proposition}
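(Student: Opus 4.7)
The plan is to build on the known meromorphic continuation of $R_{g_0}(s)$ as a $\mathcal{B}_N$-valued function (from Mazzeo--Melrose and Guillop\'e--Zworski) and reduce the problem to inverting an analytic family of compact operators, then invoke the analytic Fredholm theorem in two complex variables. The key observation is that $V_z := P_{g_z} - \Delta_{g_0}$ is a second-order differential operator whose coefficients are compactly supported in $K$ and depend holomorphically on $z \in \Omega_\varepsilon$. The identity
\[
(P_{g_z} - s(n-s))\,R_{g_0}(s) \;=\; I + V_z R_{g_0}(s),
\]
valid for $\Re(s) > n$ by Lemma~\ref{presolvent.est}, suggests the Ansatz $R_{g_z}(s) = R_{g_0}(s)\bigl(I + V_z R_{g_0}(s)\bigr)^{-1}$.

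To make this rigorous I would establish three properties of $K(z,s) := V_z R_{g_0}(s)$ on $\Omega_\varepsilon \times \{\Re(s) > -N + \tfrac{n}{2}\}$: (i) $K(z,s)$ is jointly holomorphic in $z$ and finitely meromorphic in $s$ as a bounded-operator-valued map on $x^N L^2$; (ii) at each regular point $K(z,s)$ is a compact operator on $x^N L^2$; (iii) $I + K(z, s_0)$ is invertible for all $z \in \Omega_\varepsilon$ at some fixed real $s_0$ with $\Re(s_0) > a_\varepsilon$. Property (i) is immediate from the holomorphy of the coefficients of $V_z$ and the known meromorphy of $R_{g_0}(s)$. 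Property (iii) follows from Lemma~\ref{presolvent.est}: since $R_{g_z}(s_0)$ exists with bounds uniform in $z$, the identity above gives $R_{g_0}(s_0) = R_{g_z}(s_0)(I + K(z, s_0))$, so $I + K(z, s_0)$ is invertible with bounded inverse. For property (ii), choose a cutoff $\chi \in C_c^\infty(X)$ equal to $1$ on a compact neighborhood of $K$ containing the supports of the coefficients of $V_z$; then $V_z = V_z \chi$, and interior elliptic regularity for $R_{g_0}(s)$ combined with Rellich compactness on $\supp \chi$ yields compactness of $V_z \chi R_{g_0}(s)$.

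With (i)--(iii) in hand, the analytic Fredholm theorem in two complex variables gives that $(I + K(z,s))^{-1}$ is jointly meromorphic on $\Omega_\varepsilon \times \{\Re(s) > -N + \tfrac{n}{2}\}$ with finite-rank polar part, and the relation $R_{g_z}(s) = R_{g_0}(s)(I + K(z,s))^{-1}$ then produces the claimed meromorphic extension as a $\mathcal{B}_N$-valued function. I expect the main technical obstacle to be the careful verification of compactness in (ii): because $V_z$ is second-order, naive Sobolev bookkeeping gives only boundedness on the top-order piece, so one must exploit the finer mapping property $R_{g_0}(s) : x^N L^2 \to x^{-N} H^2$ from the $0$-calculus before applying Rellich on the compact set $\supp \chi$, or alternatively split $V_z$ by order and handle the top-order term via a resolvent-identity argument that trades an additional copy of $R_{g_0}$ for two degrees of Sobolev regularity.
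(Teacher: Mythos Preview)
Your claim (ii) is false: $K(z,s) = V_z R_{g_0}(s)$ is generally \emph{not} compact on $x^N L^2$. Since $V_z = P_{g_z} - \Delta_{g_0}$ is a genuine second-order operator (the principal symbols of $P_{g_z}$ and $\Delta_{g_0}$ differ on $K$ whenever $z\neq 0$) and $R_{g_0}(s)$ has order $-2$, the composition has order zero. Concretely, for a conformal perturbation $g_1 = e^{2\phi} g_0$ one finds $V_1 = (e^{-2\phi}-1)\Delta_{g_0} + (\text{first order})$, hence $V_1 R_{g_0}(s) = (e^{-2\phi}-1) + (\text{compact})$, and multiplication by the nonzero compactly supported function $e^{-2\phi}-1$ is not compact. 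Neither of your proposed fixes helps: the mapping property $R_{g_0}(s): x^N L^2 \to H^2_{\mathrm{loc}}$ is exactly consumed by the second-order $V_z$, so there is no inclusion left to which Rellich applies; and no resolvent identity can lower the order of the principal part. Your scheme can in fact be rescued, but only by replacing ``$K(z,s)$ compact'' with the observation that $I + V_z R_{g_0}(s)$ has elliptic order-zero principal symbol $|\xi|_{g_z}^2/|\xi|_{g_0}^2$ and is therefore Fredholm, after which one must invoke the analytic Fredholm theorem for Fredholm families rather than for compact perturbations of the identity.

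The paper sidesteps the issue by choosing a parametrix that already inverts $P_{g_z}$ in the interior. With nested cutoffs $\chi_0, \chi, \chi_1$ vanishing near $K$ and equal to $1$ near $\partial\bar X$, it sets $M(s) = (1-\chi_0) R_{g_z}(s_0)(1-\chi) + \chi_1 R_{g_a}(s) \chi$ for a fixed large $s_0$ (where $R_{g_z}(s_0)$ exists on $L^2$ by Lemma~\ref{presolvent.est}). Because the interior piece inverts $P_{g_z} - s_0(n-s_0)$ exactly, the interior error involves only the commutator $[\Delta_{g_0},\chi_0]$ (first order) and the scalar shift $s_0(n-s_0) - s(n-s)$, yielding a compactly supported pseudodifferential operator of strictly negative order; the boundary error $[\Delta_{g_0},\chi_1] R_{g_a}(s)\chi$ has smooth kernel lying in $x^\infty {x'}^{s} C^\infty(X\times X)$. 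Both errors are therefore compact on $x^N L^2$ for $\Re s > -N + \tfrac{n}{2}$ and small for $s_0$, $\Re s$ large, so the standard analytic Fredholm theorem applies without further work.
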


\begin{proof}
The resolvent $R_{g_{a}}(s)$ serves as a suitable parametrix for $R_{g_{z}%
}(s)$ near the boundary. Let $\chi, \chi_{0}, \chi_{1} \in C^{\infty}(X)$ be
cutoff functions vanishing in some neighborhood of $K$ and equal to 1 in some
neighborhood of ${\partial\bar X}$, such that $\chi=1$ on the support of
$\chi_{0}$ and $\chi_{1}=1$ on the support of $\chi$. Then for large $s_{0}>0$
we set
\begin{equation}
\label{Rparametrix}M(s) = (1-\chi_{0}) R_{g_{z}}(s_{0}) (1-\chi) + \chi_{1}
R_{g_{a}}(s) \chi.
\end{equation}
Then, using the facts that $\chi_{1} \chi= \chi$ and $(1- \chi) (1- \chi_{0})
= (1- \chi)$, we obtain
\begin{equation}
\label{pmik}(P_{g_{z}} - s(n-s)) M(s) = I - K_{1}(s) - K_{2}(s),
\end{equation}
where
\[
K_{1}(s) := [\Delta_{g_{0}}, \chi_{0}] R_{g_{z}}(s_{0}) (1-\chi) +
(s_{0}(n-s_{0}) - s(n-s)) (1-\chi_{0}) R_{g_{z}}(s_{0}) (1-\chi)
\]
and
\[
K_{2}(s) := [\Delta_{g_{0}}, \chi_{1}] R_{g_{a}}(s) \chi.
\]

The error term $K_{1}(s)$ is a compactly supported pseudodifferential operator
of order $-2$, whose operator norm may be made arbitrarily small by choosing
$s_{0}$ large, according to Lemma \ref{presolvent.est}. The error term
$K_{2}(s)$ has a smooth kernel contained in $x^{\infty}{x^{\prime}}^{s}
C^{\infty}(X\times X)$. For $N>0$, $K_{2}(s)$ is a compact operator on $x^{N}
L^{2}(X, dg_{a})$ for $\Re s > -N + \tfrac{n}2$. Its norm may be made
arbitrarily small by choosing $\operatorname{Re} (s)$ large using to the
standard resolvent estimate on $R_{g_{a}}(s)$.

Since $K_{1}(s)$ and $K_{2}(s)$ are meromorphic both in $z$ and in $s$, the
analytic Fredholm theorem thus applies to show that $I-K_{1}(s)-K_{2}(s)$ is
invertible meromorphically on $\rho^{N}L^{2}(X,dg_{a})$ for $z\in
\Omega_{\varepsilon}$ and $\Re(s)>-N+\tfrac{n}{2}$.
\end{proof}


\subsection{Upper bounds on the resonance counting function for $P_{g_{z}}$}

\label{subsec:counting1}

Proposition \ref{pcontinue} allows us to define $\mathcal{R}_{g_{z}}$ as the
set of resonances $\zeta$ of $R_{g_{z}}(s)$, with multiplicities counted by
\[
m_{z}(\zeta) := \operatorname{rank} \operatorname{Res}_{\zeta}R_{g_{z}}(s).
\]
The associated resonance counting function is
\[
N_{g_{z}}(r) := \#\{\zeta\in\mathcal{R}_{g_{z}}:\> |\zeta| \le r\}.
\]
For real $z$, polynomial bounds on the growth of $N_{g_{z}}(r)$ were proven in
\cite{GZ:1995}, and an optimal upper bound on the growth of $N_{g_{z}}(r)$ was
proven by Cuevas-Vodev \cite{CV:2003} and Borthwick \cite{Borthwick:2008}. We
need to extend this bound to $z\in\Omega_{\varepsilon}$.

\begin{proposition}
\label{upper.bound} For $\varepsilon>0$ sufficiently small, there exists
$C_{\varepsilon}$ independent of $z \in\Omega_{\varepsilon}$ such that
\[
N_{g_{z}}(r) \le C_{\varepsilon}r^{n+1}.
\]

\end{proposition}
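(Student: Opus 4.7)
The plan is to adapt the determinant argument used by Cuevas--Vodev \cite{CV:2003} and Borthwick \cite{Borthwick:2008} to prove the optimal upper bound $N_g(r)\le Cr^{n+1}$ for honest Riemannian metrics, while tracking uniformity in $z\in\Omega_\varepsilon$. The starting point is the parametrix identity from the proof of Proposition~\ref{pcontinue}: for $s_0$ fixed and sufficiently large,
\[
R_{g_z}(s) = M(s)\bigl(I - K_1(s) - K_2(s)\bigr)^{-1},
\]
so (away from a finite contribution coming from the fixed operator $M(s)$) the poles of $R_{g_z}(s)$ in any half-plane $\Re s>-N+\tfrac{n}{2}$ coincide with points where $I-K_1(s)-K_2(s)$ fails to be invertible, and their multiplicities are dominated, via Gohberg--Sigal theory, by the order of vanishing of the Fredholm determinant $D_z(s):=\det_p\bigl(I-K_1(s)-K_2(s)\bigr)$ for any $p$ large enough that $K_1+K_2\in S_p$.

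First, I would refine the parametrix by iterating (\ref{Rparametrix}) a controlled number of times so that the error terms lie in a Schatten ideal $S_p$ with $p$ just above $n+1$; after enough iterations, the error becomes built from smoothing operators with compactly supported kernels or from products of $R_{g_a}(s)$ with compactly supported differential operators, whose Schatten norms admit explicit symbolic bounds. The crucial observation is that $K_1(s)$ and $K_2(s)$ depend holomorphically on $z\in\Omega_\varepsilon$ (through $R_{g_z}(s_0)$, which is uniformly bounded by Lemma~\ref{presolvent.est}, and through the coefficients of $P_{g_z}-P_{g_a}$, which are analytic in $z$ and of size $O(\varepsilon)$), and that the resolvent $R_{g_a}(s)$ appearing in $K_2(s)$ is the genuine Laplacian resolvent for the Riemannian metric $g_a$, for which the kernel/symbol bounds of \cite{MM:1987,GZ:1995} apply. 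I would check that the Schatten norm of $K_1(s)+K_2(s)$ grows at most polynomially in $|s|$, uniformly for $z\in\Omega_\varepsilon$.

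Second, combining these Schatten norm estimates with Weyl-type singular value asymptotics for compactly supported pseudodifferential operators of negative order, the same argument as in \cite{Borthwick:2008} yields
\[
\log|D_z(s)| \le C_\varepsilon\,|s|^{n+1} \qquad\text{for }|s|\text{ large,}
\]
uniformly in $z\in\Omega_\varepsilon$. Since $D_z(s)$ is holomorphic on the relevant half-plane and its zeros dominate the resonances with multiplicity, Jensen's formula applied in a disc of radius $2r$ about $n/2$ converts this growth bound directly into $N_{g_z}(r)\le C_\varepsilon r^{n+1}$.

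The main obstacle will be the Schatten-norm estimates: because $P_{g_z}$ is not self-adjoint one cannot appeal to spectral calculus, and the estimates must be derived at the kernel level. However, since the coefficients of $P_{g_z}-P_{g_a}$ are $O(\varepsilon)$ and holomorphic in $z$, the symbol-level bounds governing the singular values of the compactly supported components of $K_1(s)$ and $K_2(s)$ are $\varepsilon$-perturbations of the corresponding bounds for the honest Riemannian Laplacian $\Delta_{g_a}$; uniformity in $z$ then follows from the compactness of $\Omega_\varepsilon$ together with the real-variable case already treated in \cite{Borthwick:2008}.
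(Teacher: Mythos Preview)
Your proposal captures the correct underlying strategy---a Fredholm determinant together with Jensen's formula---but it misidentifies which parametrix to use and overlooks the reason the paper invokes \emph{two} separate arguments.

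The parametrix $M(s)$ from Proposition~\ref{pcontinue} is not suitable for the determinant argument: its boundary piece is $\chi_1 R_{g_a}(s)\chi$, built from the full resolvent of $\Delta_{g_a}$, so $K_2(s)$ inherits poles at every resonance of $g_a$. Your determinant $D_z(s)$ is therefore meromorphic, not holomorphic, and Jensen's formula does not apply directly in a disc of radius $2r$. The proofs in \cite{CV:2003} and \cite{Borthwick:2008} instead use a \emph{model} boundary parametrix built from the explicit hyperbolic structure near infinity; this has only the controlled singularities of $R_{\mathbb{H}^{n+1}}(s)$, and---crucially---is independent of $z$. The paper exploits exactly this: since $P_{g_z}=\Delta_{g_0}$ outside $K$, the entire boundary analysis in \cite{CV:2003,Borthwick:2008} is literally unchanged, and only the interior term (governed by Lemma~\ref{presolvent.est} and uniform ellipticity of $P_{g_z}$) needs to be checked for $z$-uniformity.

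There is a second gap. Even with the correct model parametrix, the Cuevas--Vodev determinant bound only yields the count in a sector $\arg(\zeta-\tfrac{n}{2})\in[-\pi+\varepsilon,\pi-\varepsilon]$, because the Schatten-norm estimates on the boundary error degenerate near the negative real axis. The paper fills in the missing sector by a \emph{different} argument from \cite{Borthwick:2008}, based on the scattering matrix and requiring only $\|R_{g_z}(n-s)\|=O(1)$ in a left half-plane---precisely what Lemma~\ref{presolvent.est} supplies. Your single-determinant approach does not account for this sector, and the claim that $\log|D_z(s)|\le C_\varepsilon|s|^{n+1}$ holds \emph{for all} large $|s|$ is not justified by the estimates you describe.
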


\begin{proof}
In the proofs cited above, the interior metric enters only in the interior
parametrix term, i.e., the first term on the right in (\ref{Rparametrix}).
Most of the work goes into estimation of the boundary terms, and these results
apply immediately to $P_{g_{z}}$ because $P_{g_{z}} = \Delta_{g_{0}}$ on $X-
K$.

In the argument from Cuevas-Vodev, the only estimate required of the interior
term is \cite[eq.~(2.24)]{CV:2003}, an estimate on the singular values the
operator $K_{1}(s)$ defined above. These estimates depend only on the fact
that $K_{1}(s)$ is compactly supported and of order $-2$. For $\varepsilon$
sufficiently small, $P_{g_{z}}$ will be uniformly elliptic for $z\in
\Omega_{\varepsilon}$, and so $R_{g_{z}}(s)$ will have order $-2$ and the
required estimates on $K_{1}(s)$ can be done uniformly in $z$. The proof of
\cite[Prop.~1.2]{CV:2003} then gives a bound
\[
\#\{\zeta\in\mathcal{R}_{g_{z}}:\> |\zeta| \le r,\> \arg(\zeta- \tfrac{n}2)
\in[-\pi+\varepsilon, \pi- \varepsilon]\} \le C_{\varepsilon}r^{n+1}.
\]

To fill in the missing sector containing the negative real axis, we apply the
argument from Borthwick \cite{Borthwick:2008}. Here the interior parametrix
enters only in the proof of \cite[Lemma~5.2]{Borthwick:2008}. In the original
version, the standard resolvent estimate was used in the form $\left\Vert
R_{g_{a}}(n-s) \right\Vert = O(1)$ for $\Re s \le0$. For $R_{g_{z}}(n-s)$ this
must be replaced by the estimate from Lemma \ref{presolvent.est}, which gives
$\left\Vert R_{g_{z}}(n-s) \right\Vert = O(1)$ for $\Re s < n - a_{\varepsilon
}$. The result is that we have
\[
\#\{\zeta\in\mathcal{R}_{g_{z}}:\> |\zeta| \le r,\> \arg(\zeta-
n+a_{\varepsilon}) \in[\tfrac{\pi}2+\varepsilon, \tfrac{3\pi}2 -
\varepsilon]\} \le C_{\varepsilon}r^{n+1}.
\]

Since the two estimates obtained cover all but a compact region, the result follows.
\end{proof}


\subsection{The scattering matrix associated with $P_{g_{z}}$}

\label{subsec:s-matrix1}

The meromorphic continuation of $R_{g_{z}}(s)$ allows us to define the
associated scattering matrix $S_{g_{z}}(s)$ exactly as in (\ref{eq:dirichlet1}%
)-(\ref{eq:s-matrix1}). Scattering multiplicities are defined by
\[
\nu_{g_{z}}(\zeta):=-\operatorname{tr}\bigl[\operatorname{Res}_{\zeta}%
{\tilde{S}}_{g_{z}}^{\prime}(s){\tilde{S}}_{g_{z}}(s)^{-1}\bigr],
\]
where
\[
{\tilde{S}}_{g_{z}}(s):=\frac{\Gamma(s-\frac{n}{2})}{\Gamma(\frac{n}{2}%
-s)}S_{g_{z}}(s).
\]
Since the relation between scattering poles and resonances depends only on the
boundary structure of the resolvent, it carries over immediately to $S_{g_{z}%
}(s)$,
\begin{equation}
\nu_{g_{z}}(\zeta)=m_{z}(\zeta)-m_{z}(n-\zeta)+\sum_{k\in\mathbb{N}%
}\Bigl(\mathbf{1}_{n/2-k}(\zeta)-\mathbf{1}_{n/2+k}(\zeta)\Bigr)d_{k},
\label{nuz.muz}%
\end{equation}
where
\[
d_{k}=\dim\ker P_{k}%
\]
with
\[
P_{k}={\tilde{S}}_{g_{0}}(\tfrac{n}{2}+k).
\]

Applying $R_{g_{z}}(s)$ to (\ref{pmik}) from the left, we obtain the identity
\[
R_{g_{z}}(s)=M(s)+R_{g_{z}}(s)(K_{1}(s)+K_{2}(s))
\]
By taking the boundary limits of this formula as the boundary defining
functions $x,x^{\prime}\rightarrow0$, we obtain some useful relations. The
Poisson operators associated to $P_{g_{z}}$ and $\Delta_{g_{0}}$ are related
by
\begin{equation}
E_{g_{z}}(s)=E_{g_{0}}(s)+R_{g_{z}}(s)[\Delta_{g_{0}},\chi_{1}]E_{g_{0}}(s),
\label{Ez.E0}%
\end{equation}
and for the scattering matrices we have
\begin{equation}
S_{g_{z}}(s)=S_{g_{0}}(s)+E_{g_{z}}(s)^{t}[\Delta_{g_{0}},\chi_{1}]E_{g_{0}%
}(s). \label{Sz.S0}%
\end{equation}
The latter equation shows that $S_{g_{z}}(s)$ and $S_{g_{0}}(s)$ differ by a
smoothing operator on ${\partial\bar{X}}$. This shows in particular that the
relative scattering matrix $S_{g_{z}}(s)S_{g_{0}}(s)^{-1}$ is determinant
class. In fact, by the identity $E(s)S(s)^{-1}=-E(n-s)$, the relative
scattering matrix is given explicitly by
\begin{equation}
S_{g_{z}}(s)S_{g_{0}}(s)^{-1}=I-E_{g_{z}}(s)^{t}[\Delta_{g_{0}},\chi
_{1}]E_{g_{0}}(n-s) \label{SzS0inv}%
\end{equation}
We can exploit these relationships further by substituting the transpose of
(\ref{Ez.E0}) into (\ref{SzS0inv}). This yields
\begin{equation}%
\begin{split}
S_{g_{z}}(s)S_{g_{0}}(s)^{-1}  &  =I-E_{g_{0}}(s)^{t}[\Delta_{g_{0}},\chi
_{1}]E_{g_{0}}(n-s)\\
&  \qquad-([\Delta_{g_{0}},\chi_{1}]E_{g_{0}}(s))^{t}R_{g_{z}}(s)[\Delta
_{g_{0}},\chi_{1}]E_{g_{0}}(n-s).
\end{split}
\label{Srel.identity}%
\end{equation}
The point of this formula is that the dependence on $g_{z}$ is isolated in the
$R_{g_{z}}(s)$ term. It also shows that $S_{g_{z}}(s)S_{g_{0}}(s)^{-1}$ is a
meromorphic function of $z$ and $s$ since the same is true of $R_{g_{z}}%
(s)$.\ We will use it later to estimate $S_{g_{z}}(s)S_{g_{0}}(s)^{-1}$ in
terms of the difference in the metrics. Note that, since $R_{g_{z}}(s)$ is
meromorphic in $\Omega_{\varepsilon}\times\mathbb{C}$, so is $S_{g_{z}}(s)$.

Let $H_{z}(s)$ denote the Hadamard product over the resonance set
$\mathcal{R}_{g_{z}}$:
\begin{equation}
\label{pz.def}H_{z}(s) := \prod_{\zeta\in\mathcal{R}_{g_{z}}} E\Bigl(\frac
{s}{\zeta}, n+1\Bigr),
\end{equation}
where
\[
E(u,p) := (1-u) \exp\Bigl(u + \frac{u^{2}}2 + \dots+ \frac{u^{p}}p \Bigr).
\]
The relative scattering determinant may be defined as
\[
\sigma_{g_{z},g_{0}}(s):=\det[ S_{g_{z}}(s) S_{g_{0}}^{-1} (s)].
\]

\begin{proposition}
\label{detsrel.factor} The relative scattering determinant admits a
factorization
\begin{equation}
\label{detsrel.pp}\sigma_{g_{z},g_{0}}(s) = e^{q(s)} \frac{H_{z}(n-s)}%
{H_{z}(s)} \frac{H_{0}(s)}{H_{0}(n-s)},
\end{equation}
where $q(s)$ is a polynomial of degree at most $n+1$.
\end{proposition}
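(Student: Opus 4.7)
The plan is to prove this in three steps: first match the divisors on the two sides, then identify the discrepancy as the exponential of an entire function, and finally use growth estimates to show that entire function is a polynomial of degree at most $n+1$.

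\textbf{Step 1 (Divisor matching).} I would first compute the divisor of $\sigma_{g_z,g_0}(s)$. Because the gamma-function factors defining $\tilde S_{g_z}$ cancel in the ratio, $\sigma_{g_z,g_0}(s)=\det[\tilde S_{g_z}(s)\tilde S_{g_0}(s)^{-1}]$, so by the definition of $\nu_{g_z}$ the order of $\sigma_{g_z,g_0}$ at a point $\zeta$ is $\nu_{g_0}(\zeta)-\nu_{g_z}(\zeta)$. Applying the identity (\ref{nuz.muz}) for both $z$ and $0$, the GJMS contributions depending on $d_k$ cancel because $g_z$ and $g_0$ share the same conformal infinity. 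What remains is
\[
\operatorname{ord}_\zeta \sigma_{g_z,g_0}=\bigl[m_0(\zeta)-m_0(n-\zeta)\bigr]-\bigl[m_z(\zeta)-m_z(n-\zeta)\bigr].
\]
On the other hand, since $H_z$ has a zero of order $m_z(\zeta)$ precisely at $\zeta\in\mathcal{R}_{g_z}$, the ratio $H_z(n-s)H_0(s)/[H_z(s)H_0(n-s)]$ has exactly the same divisor. I need to take a moment to verify that the exceptional $\zeta$'s belonging to both $\mathcal{R}_{g_z}$ and the reflected set $n-\mathcal{R}_{g_z}$ contribute correctly, but this is just bookkeeping.

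\textbf{Step 2 (Identifying the discrepancy).} Once the divisors agree, the quotient
\[
F(s):=\sigma_{g_z,g_0}(s)\,\frac{H_z(s)H_0(n-s)}{H_z(n-s)H_0(s)}
\]
is entire and nowhere vanishing on $\mathbb{C}$, hence by the simply-connectedness of $\mathbb{C}$ it equals $e^{q(s)}$ for an entire function $q$.

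\textbf{Step 3 (Polynomial degree bound).} It remains to show $q$ is a polynomial of degree at most $n+1$. By Proposition~\ref{upper.bound}, $N_{g_z}(r)\le C_\varepsilon r^{n+1}$ (and similarly for $g_0$), so the genus-$(n+1)$ Hadamard products $H_z$ and $H_0$ have order at most $n+1$; this controls the growth of the ratio of $H$'s. The remaining ingredient is the bound $\log|\sigma_{g_z,g_0}(s)|=O(|s|^{n+1})$ on sufficiently large circles avoiding the resonances. For this I would use (\ref{Srel.identity}), which isolates the $g_z$-dependence in the single factor $R_{g_z}(s)$ sandwiched between cutoff commutators acting on the Poisson operators. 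The operator $[\Delta_{g_0},\chi_1]E_{g_0}(s)$ has a compactly supported, smooth kernel, so standard pseudodifferential/parametrix arguments (as used in \cite{CV:2003,Borthwick:2008} to obtain the upper bound on $N_{g_z}$) give trace-norm bounds of order $|s|^{n+1}$ on the trace class perturbation $I-S_{g_z}(s)S_{g_0}(s)^{-1}$, yielding the desired Fredholm determinant estimate. Combining these gives $|F(s)|\le e^{C|s|^{n+1}}$ off the zero set, and the standard argument (minimum modulus on expanding circles) promotes this to the bound $\Re q(s)\le C|s|^{n+1}$ everywhere; a Borel--Carathéodory estimate then forces $q$ to be a polynomial of degree at most $n+1$.

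\textbf{Main obstacle.} The only nontrivial work is the $|s|^{n+1}$ bound on $\log|\sigma_{g_z,g_0}|$ in Step 3, which requires a careful singular-value estimate for the trace-class operator in (\ref{Srel.identity}), uniform for $z\in\Omega_\varepsilon$; the divisor computation in Step 1 is purely combinatorial once (\ref{nuz.muz}) is in hand.
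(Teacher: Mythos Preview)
Your proposal is correct and follows the same three-step architecture as the paper: match divisors via (\ref{nuz.muz}) (noting the $d_k$ contributions cancel), deduce that the quotient is $e^{q(s)}$, and then bound $\deg q$ by a growth estimate. The only substantive difference is in how the growth estimate in Step~3 is obtained. The paper does not estimate $\sigma_{g_z,g_0}(s)$ directly from (\ref{Srel.identity}); instead it introduces the auxiliary determinant $\vartheta_z(s):=\det S_{g_z}(n-s)A(s)$ (with $A(s)$ the boundary parametrix from \cite[\S3]{Borthwick:2008}, common to all $g_z$), so that $\sigma_{g_z,g_0}=\vartheta_0/\vartheta_z$, and then adapts \cite[Lemma~5.2]{Borthwick:2008} using Lemma~\ref{presolvent.est} to get $|\vartheta_z(s)|\le e^{C\langle s\rangle^{n+1}}$ in a single half-plane $\Re(s)<n-a_\varepsilon$, which already suffices to bound $\deg q$. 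Your route through (\ref{Srel.identity}) and singular-value estimates on $[\Delta_{g_0},\chi_1]E_{g_0}(s)$, $E_{g_0}(n-s)$, and the sandwiched $R_{g_z}(s)$ is also viable and draws on the same estimates from \cite{CV:2003,Borthwick:2008}, but the paper's use of $\vartheta_z$ is more economical because it plugs directly into machinery already established in \cite{Borthwick:2008} and only requires control in a half-plane rather than on full circles.
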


\begin{proof}
Let $A(s)$ be the auxiliary operator introduced in \cite[\S 3]{Borthwick:2008}%
, defined so that $S_{g_{z}}(s) - A(s)$ is smoothing. Note that the
construction of $A(s)$ depends only on the metric in a neighborhood of
${\partial\bar X}$ and so the same $A(s)$ works for any of the ``metrics''
$g_{z}$. We set
\[
\vartheta_{z}(s) := \det S_{g_{z}}(n-s) A(s).
\]
The arguments in \cite[\S 6]{Borthwick:2008} apply immediately to show that
$\vartheta_{z}(s)$ is a ratio of entire functions of bounded order.
Furthermore
\[
\det S_{g_{z}}(s) S_{g_{0}}(s)^{-1} = \frac{\vartheta_{0}(s)}{\vartheta
_{z}(s)}.
\]

In computing the divisor of $\vartheta_{0}(s)/\vartheta_{z}(s)$, the terms
coming from $A(s)$ cancel, and we find, by the definition of $\nu_{g_{z}%
}(\zeta)$,
\[
\operatorname{Res}_{\zeta}\frac{\vartheta_{z}^{\prime}}{\vartheta_{z}}(s) -
\operatorname{Res}_{\zeta}\frac{\vartheta_{0}^{\prime}}{\vartheta_{0}}(s) = -
\nu_{g_{z}}(\zeta) + \nu_{g_{0}}(\zeta).
\]
Hence the relation (\ref{nuz.muz}) shows that both sides of (\ref{detsrel.pp})
have the same divisor. We have thus proven (\ref{detsrel.pp}) with $q(s)$ some
polynomial of unknown degree.

To control the degree, we use Lemma~\ref{presolvent.est} to adapt the proof of
\cite[Lemma~5.2]{Borthwick:2008}, just as we did above, to prove for
$\Re(s)<n-a_{\varepsilon}$ that
\[
|\vartheta_{z}(s)|<e^{C_{\eta,\varepsilon}\langle s\rangle^{n+1}},
\]
provided $d(s,-\mathbb{N}_{0})>\eta$. Since we can write
\[
\vartheta_{z}(s)=e^{-q(s)}\frac{H_{0}(n-s)}{H_{0}(s)}\frac{H_{z}(s)}%
{H_{z}(n-s)}\>\vartheta_{0}(s),
\]
and the Hadamard products have order $n+1$, this shows that $|q(s)|\leq
C|s|^{n+1+\delta}$ in the half-plane $\Re(s)<n-a_{\varepsilon}$ for any
$\delta>0$. Hence the degree of $q(s)$ is at most $n+1$.
\end{proof}

Define the meromorphic function $\Upsilon_{z}(s)$ by
\[
\Upsilon_{z}(s)=(2s-n)%
\operatorname{0-Tr}%
[R_{g_{z}}(s)-R_{g_{z}}(1-s)],
\]
for $s\notin\mathbb{Z}/2$. The connection between $\Upsilon_{z}(s)$ and the
logarithmic derivative of the scattering determinant established by
Patterson-Perry \cite[Prop.~5.3 and Lemma~6.7]{PP:2001} depends only on the
structure of model neighborhoods near infinity, and so carries over to our
case without alteration. This yields the following Birman-Krein type formula:

\begin{proposition}
\label{birman.krein} For $s\notin\mathbb{Z}/2$ we have the meromorphic
identity,
\[
- \frac{d}{ds} \log\sigma_{g_{z},g_{0}}(s) = \Upsilon_{z}(s) - \Upsilon
_{0}(s).
\]

\end{proposition}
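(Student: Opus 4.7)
The strategy is to reduce to the case of real $z\in[0,1]$---where $P_{g_z}=\Delta_{g_z}$ is the Laplacian of a genuine Riemannian metric hyperbolic near infinity---and then extend to all of $\Omega_{\varepsilon}$ by analyticity.

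For $z\in[0,1]$, the argument of Patterson-Perry [Prop.~5.3 and Lemma~6.7] applies. The derivation there rests on a parametrix decomposition of $R_{g_z}(s)-R_{g_z}(1-s)$ that separates an interior piece from a boundary/model piece depending only on the metric near infinity; when one takes the $0$-trace, the interior piece yields $-\tfrac{d}{ds}\log\det\tilde{S}_{g_z}(s)$ via a Birman-Krein-type identity, while the model piece produces a term $M(s)$ determined solely by the conformal infinity. Since $g_z=g_0$ outside $K$, the same $M(s)$ appears in the analogous identity for $g_0$. Subtracting the two identities, and observing that the $\Gamma$-factors in $\tilde{S}_{g_z}\tilde{S}_{g_0}^{-1}$ cancel so that $\det[\tilde{S}_{g_z}(s)\tilde{S}_{g_0}(s)^{-1}]=\sigma_{g_z,g_0}(s)$, yields the stated formula on $[0,1]\times\C$.

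To extend to all of $\Omega_{\varepsilon}$, I would verify that both sides are meromorphic in $(z,s)$. The left side is meromorphic via Proposition \ref{detsrel.factor} together with the meromorphic dependence of each factor on $z$. For the right side, the resolvent identity
\[
R_{g_z}(s)-R_{g_0}(s)=-R_{g_z}(s)(P_{g_z}-\Delta_{g_0})R_{g_0}(s)
\]
together with the fact that $P_{g_z}-\Delta_{g_0}$ is supported in $K$ shows that this difference has compactly supported kernel, hence is trace class, and
\[
\Upsilon_z(s)-\Upsilon_0(s)=(2s-n)\,\Tr\bigl[R_{g_z}(s)-R_{g_0}(s)-R_{g_z}(1-s)+R_{g_0}(1-s)\bigr]
\]
is meromorphic in $(z,s)\in\Omega_{\varepsilon}\times\C$ by Proposition \ref{pcontinue}. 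The identity theorem for meromorphic functions in two variables then propagates the equality from $z\in[0,1]$ to all of $\Omega_{\varepsilon}$.

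The main obstacle is verifying that the model contribution $M(s)$ in the Patterson-Perry formula is genuinely insensitive to the interior metric. This is essentially a statement about the structure of the $0$-trace regularization: the finite-part prescription subtracts divergences that only see the polyhomogeneous expansion of the resolvent kernel at the conformal boundary, which is entirely determined by $g_z|_{X\setminus K}=g_0|_{X\setminus K}$. Once this locality is established, the complex extension is routine given the analytic framework already developed in Section \ref{sec:complexmetric1}; indeed, working directly at complex $z$ one could bypass the analyticity argument entirely, but the reduction to real $z$ has the advantage of leveraging Patterson-Perry's formula verbatim, without reverifying it in the non-self-adjoint setting.
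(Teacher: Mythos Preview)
Your approach aligns with the paper's: invoke Patterson--Perry, observing that their computation depends only on the model structure near infinity, which $g_z$ and $g_0$ share. The paper applies this directly for complex $z$ rather than first proving it for real $z$ and then extending by analyticity, but your two-step variant is a reasonable packaging of the same idea, and you correctly identify that the crux is the locality of the model contribution $M(s)$.

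One slip worth flagging: your claim that $R_{g_z}(s)-R_{g_0}(s)$ has ``compactly supported kernel, hence is trace class'' is false. The resolvent identity gives $R_{g_z}(s)-R_{g_0}(s)=-R_{g_z}(s)\,V\,R_{g_0}(s)$ with $V=P_{g_z}-\Delta_{g_0}$ a compactly supported second-order operator, but applying $R_{g_z}(s)$ on the left propagates support out to all of $X$; the resulting kernel decays like $x^s{x'}^s$ near the boundary but is neither compactly supported nor (for general $s$) integrable on the diagonal against $dg_0$. This does not wreck your argument---what you actually need is that $\Upsilon_z(s)-\Upsilon_0(s)$ is jointly meromorphic in $(z,s)$, and that follows from the meromorphicity of $R_{g_z}(s)$ established in Proposition~\ref{pcontinue} together with the observation that the $0$-trace finite-part depends analytically on the polyhomogeneous boundary expansion---but the specific justification you wrote should be replaced.
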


For $a$ real (so that $g_{a}$ is an actual metric), we define the relative
volume
\[
V_{\mathrm{rel}}(a)=\operatorname{Vol}(K,g_{a})-\operatorname{Vol}(K,g_{0}).
\]
We can derive asymptotics from Proposition 2.5 as in Borthwick
\cite[Thm.~10.1]{Borthwick:2008}. Furthermore, the restriction to metrics
strongly hyperbolic near infinity in \cite{Borthwick:2008} can be relaxed here
because we are only interested in the \emph{relative} scattering determinant.

\begin{corollary}
\label{weyl.asymp} For $a \in[-\varepsilon, 1+ \varepsilon]$, as $\xi
\to+\infty$,
\[
\log\sigma_{g_{a},g_{0}}(\tfrac{n}2 + i\xi) = c_{n} V_{\mathrm{rel}}(a)
\>\xi^{n+1} + \mathcal{O} (\xi^{n}).
\]
where
\[
c_{n} = -2\pi i \frac{(4\pi)^{-(n+1)/2}}{\Gamma(\frac{n+3}{2})}.
\]

\end{corollary}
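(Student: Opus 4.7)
The plan is to integrate the Birman--Krein identity of Proposition~\ref{birman.krein} along the critical line, exploiting the fact that $g_a$ and $g_0$ coincide on $X\setminus K$ so that the difference $\Upsilon_a-\Upsilon_0$ localizes to a compactly supported contribution admitting a standard Weyl-type parametrix expansion.

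For $a\in[-\varepsilon,1+\varepsilon]$ real, $P_{g_a}=\Delta_{g_a}$ is self-adjoint, and integrating the Birman--Krein identity along $s=\tfrac n2+i\xi'$ from $\xi'=0$ to $\xi'=\xi$ yields
$$\log\sigma_{g_a,g_0}\bigl(\tfrac{n}{2}+i\xi\bigr)=-i\int_{0}^{\xi}\bigl[\Upsilon_a\bigl(\tfrac{n}{2}+i\xi'\bigr)-\Upsilon_0\bigl(\tfrac{n}{2}+i\xi'\bigr)\bigr]\,d\xi'+\mathcal{O}(1),$$
where the additive constant at $\xi=0$ is harmless compared to the $\mathcal{O}(\xi^n)$ target error.

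The key observation is that $V:=P_{g_a}-P_{g_0}$ is a second-order differential operator compactly supported in $K$, so by the resolvent identity $R_{g_a}(s)-R_{g_0}(s)=-R_{g_a}(s)\,V\,R_{g_0}(s)$ the Schwartz kernel of the resolvent difference has rapid decay at the conformal infinity for $\Re s$ large. Hence the $0$-regularization in $\Upsilon_a(s)-\Upsilon_0(s)$ is redundant and the quantity reduces to an ordinary trace of a compactly-localized relative resolvent. A standard local parametrix construction on $K$---equivalently, the small-time expansion of $\operatorname{Tr}\bigl(e^{-t\Delta_{g_a}}-e^{-t\Delta_{g_0}}\bigr)$ transported to the resolvent via Mellin transform---produces an asymptotic expansion in decreasing powers of $\xi'$ on the critical line whose leading coefficient is a universal multiple of the top heat invariant, namely the relative volume $V_{\mathrm{rel}}(a)=\operatorname{Vol}(K,g_a)-\operatorname{Vol}(K,g_0)$. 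The $R(1-s)$ contribution is handled in the same way and combines with the $R(s)$ piece without altering the leading order.

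Substituting the expansion, multiplying by $(2s-n)=2i\xi'$ and integrating in $\xi'$ then produces $c_n V_{\mathrm{rel}}(a)\,\xi^{n+1}+\mathcal{O}(\xi^n)$; the explicit constant $c_n=-2\pi i(4\pi)^{-(n+1)/2}/\Gamma\bigl(\tfrac{n+3}{2}\bigr)$ is the standard Weyl normalization and is computed exactly as in \cite[Thm.~10.1]{Borthwick:2008}. Crucially, the strongly-hyperbolic-at-infinity hypothesis in \cite[Thm.~10.1]{Borthwick:2008} was needed only to control the \emph{absolute} $0$-trace of $R_g(s)$, where topological and infinity contributions had to be tracked; here every such contribution cancels in the difference because $g_a$ and $g_0$ agree on $X\setminus K$. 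The main technical obstacle is thus the uniform remainder control for the local parametrix expansion as $\xi\to\infty$ on the critical line, which is handled exactly as in \cite[\S10]{Borthwick:2008} once the computation has been reduced to a trace over the compact set $K$.
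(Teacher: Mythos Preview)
Your proposal is correct and follows the same route as the paper: integrate the Birman--Krein identity of Proposition~\ref{birman.krein} along the critical line and reduce to the Weyl-type expansion of \cite[Thm.~10.1]{Borthwick:2008}, with the key remark that the strongly-hyperbolic-near-infinity hypothesis there can be dropped because in the \emph{relative} quantity the boundary contributions cancel. The paper's own proof is in fact nothing more than a one-line reference to \cite[Thm.~10.1]{Borthwick:2008} together with exactly this cancellation observation, so you have supplied somewhat more detail than the paper itself.
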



\section{Lower bounds from the relative wave trace}

\label{sec:relwavetrace1}

If the dimension $n+1$ is even ($n$ odd), then we can deduce a lower bound on
the resolvent resonances by using a relative wave trace to cancel the
conformal Graham-Zworski scattering poles (the $d_{k}$ terms in Poisson
formula \cite[Thm.~1.2]{Borthwick:2008}).

Let $(X,g_{0})$ be conformally compact and hyperbolic near infinity, and
$g_{1}$ another metric that agrees with $g_{0}$ outside some compact set
$K\subset X$. By the functional calculus, $\Upsilon_{a}(\tfrac{n}{2}+i\xi)$ is
essentially the Fourier transform of the continuous part of the wave 0-trace
(see \cite[Lemma~8.1]{Borthwick:2008}). By Propositions~\ref{detsrel.factor}
and \ref{birman.krein} we can write
\[
\Upsilon_{1}(s)-\Upsilon_{0}(s)=\partial_{s}\log\left[  e^{q(s)}\frac
{H_{1}(s)}{H_{1}(n-s)}\frac{H_{0}(n-s)}{H_{0}(s)}\right]
\]
Taking the Fourier transform just as in the proof of \cite[Thm.~1.2]%
{Borthwick:2008} then gives

\begin{theorem}
\label{rel.poisson} For $(X,g_{0})$ conformally compact and hyperbolic near
infinity, and $g_{1}$ a compactly supported perturbation, we have
\[%
\begin{split}
&
\operatorname{0-Tr}%
\left[  \cos\left(  t\sqrt{\smash[b]{\Delta_{g_1} - n^2/4}}\,\right)  \right]
-%
\operatorname{0-Tr}%
\left[  \cos\left(  t\sqrt{\smash[b]{\Delta_{g_0} - n^2/4}}\,\right)  \right]
\\
&  \qquad=\frac{1}{2}\sum_{\zeta\in\mathcal{R}_{g_{1}}}e^{(\zeta
-n/2)|t|}-\frac{1}{2}\sum_{\zeta\in\mathcal{R}_{g_{0}}}e^{(\zeta-n/2)|t|},
\end{split}
\]
in the sense of distributions on $\mathbb{R}-\{0\}$.
\end{theorem}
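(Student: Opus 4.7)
The plan is to Fourier transform the Birman--Krein identity of Proposition~\ref{birman.krein} along the critical line $s = \tfrac{n}{2} + i\xi$, substituting the factorization of the relative scattering determinant from Proposition~\ref{detsrel.factor} on one side and the wave-trace interpretation of $\Upsilon_a$ from Lemma~8.1 of \cite{Borthwick:2008} on the other. This is essentially a relative-setting transcription of the proof of \cite[Thm.~1.2]{Borthwick:2008}, where a conveniently large number of boundary contributions cancel for free.

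For the wave-trace side, Lemma~8.1 of \cite{Borthwick:2008} identifies $\Upsilon_a(\tfrac{n}{2} + i\xi)$ with (essentially) the Fourier transform in $\xi$ of the regular part of $\operatorname{0-Tr}[\cos(t\sqrt{\Delta_{g_a} - n^2/4})]$. Taking the difference between $a=1$ and $a=0$, the topological singularity $A(X)\cosh(|t|/2)/[2(\sinh(|t|/2))^{n+1}]$ appearing in the Poisson formula depends only on $\chi(\overline{X})$, an invariant of the boundary that is unchanged by a compactly supported metric perturbation. This term therefore cancels, so $\Upsilon_1 - \Upsilon_0$ is precisely the Fourier transform of the left-hand side of the claimed identity.

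For the resonance side, combining Propositions~\ref{detsrel.factor} and~\ref{birman.krein} yields
\[
\Upsilon_1(s) - \Upsilon_0(s) = -q'(s) + \partial_s\log\frac{H_1(s)}{H_1(n-s)} - \partial_s\log\frac{H_0(s)}{H_0(n-s)},
\]
with $q$ polynomial of degree at most $n+1$. The logarithmic derivative of each elementary Hadamard factor $E(s/\zeta, n+1)$ contributes a simple pole $-(\zeta-s)^{-1}$ at each resonance $\zeta$ plus a polynomial in $s$ of degree at most $n$. Restricting to $s = \tfrac{n}{2} + i\xi$ and inverse Fourier transforming in $\xi$, every polynomial contribution (including $q'$) becomes a derivative of the Dirac mass at $t=0$ and so is invisible on $\mathbb{R}\setminus\{0\}$. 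Each surviving pole $(\zeta-s)^{-1}$, paired with its partner $(\zeta - n + s)^{-1}$ coming from the $H_z(n-s)$ factor, Fourier transforms to a multiple of $e^{(\zeta-n/2)|t|}$; an elementary residue computation pins down the constant $\tfrac{1}{2}$ in the theorem, and resonances with $\operatorname{Re}\zeta > \tfrac{n}{2}$ (from the finitely many $L^2$ eigenvalues) are absorbed correctly by the $\zeta \leftrightarrow n-\zeta$ pairing.

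The main obstacle is justifying the termwise Fourier transformation, i.e.\ interchanging the infinite resonance sum with the inverse Fourier transform. This requires uniform Hadamard-product control along vertical lines, which follows from the order-$(n+1)$ counting bound of Proposition~\ref{upper.bound} combined with the tempered-distribution estimates of \cite[\S 8]{Borthwick:2008}. Once this interchange is secured, the argument is formal, and the key geometric point is that, in the relative setting, both the topological term $A(X)$ in the wave trace and the Graham--Zworski $d_k$ contributions to the scattering multiplicities cancel identically, because both depend only on the conformal boundary, which is unchanged by the compactly supported perturbation.
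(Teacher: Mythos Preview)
Your proposal is correct and follows essentially the same approach as the paper: combine Propositions~\ref{detsrel.factor} and~\ref{birman.krein} to express $\Upsilon_1 - \Upsilon_0$ as the logarithmic derivative of a Hadamard quotient, identify $\Upsilon_a$ with the Fourier transform of the wave $0$-trace via \cite[Lemma~8.1]{Borthwick:2008}, and then invert the Fourier transform termwise exactly as in the proof of \cite[Thm.~1.2]{Borthwick:2008}. Your remarks on the cancellation of the $A(X)$ and $d_k$ contributions are correct conceptual glosses on why the relative formula is cleaner than the absolute one, though in the actual argument these terms simply never appear: the Hadamard products in Proposition~\ref{detsrel.factor} are already over resolvent resonances, and Lemma~8.1 relates $\Upsilon_a$ directly to the wave $0$-trace without passing through the absolute Poisson formula~(\ref{eq:wave0trace1}).
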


(Note that \cite[Thm.~1.2]{Borthwick:2008} required a metric strongly
hyperbolic near infinity; we may drop that restriction here because we are
dealing with the difference of two wave traces.)

Theorem~\ref{rel.poisson} applies in any dimension, but it only gives a lower
bound on resonances when the singularity on the wave trace side spreads out
beyond $t=0$. The following Corollary requires $n+1$ even and a nonzero
relative volume between the two metrics.

\begin{corollary}
\label{cor:hominis}Assume that $n+1$ is even and $g_{0}$, $g_{1}$ are metrics
as above. There is a constant $c>0$ such that
\[
N_{g_{0}}(r)+N_{g_{1}}(r)\geq c\>\bigl|\operatorname{Vol}(K,g_{1}%
)-\operatorname{Vol}(K,g_{0})\bigr|\>r^{n+1}.
\]

\end{corollary}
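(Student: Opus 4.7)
The plan is to combine the relative Poisson formula of Theorem~\ref{rel.poisson} with the scattering-phase asymptotic of Corollary~\ref{weyl.asymp}, interpreting the latter as identifying the Fourier transform of the $t=0$ singularity of the relative wave $0$-trace. The structural point is that when $n+1$ is even the continuous part of the wave $0$-trace carries a homogeneous singularity of order $|t|^{-(n+1)}$ at $t=0$ whose coefficient is proportional to the $0$-volume, so the \emph{relative} trace inherits a $t=0$ singularity proportional to $V_{\mathrm{rel}}$.

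First I would use Proposition~\ref{birman.krein} together with Corollary~\ref{weyl.asymp}, differentiating the log-determinant asymptotic along the critical line $s = n/2 + i\xi$, to obtain
\[
\Upsilon_{1}(\tfrac{n}{2}+i\xi) - \Upsilon_{0}(\tfrac{n}{2}+i\xi) = \beta_{n}\,V_{\mathrm{rel}}\,\xi^{n} + O(\xi^{n-1})
\]
as $\xi \to \infty$, for a nonzero explicit constant $\beta_{n}$ (a multiple of $(n+1)c_{n}$). Via the identification of $\Upsilon_{z}$ with the Fourier transform of the continuous part of the wave $0$-trace established in \cite[Lemma~8.1]{Borthwick:2008}, this pins down the leading singular behavior of the left side of Theorem~\ref{rel.poisson} as a distribution of the form $\mathrm{const}\cdot V_{\mathrm{rel}}\cdot|t|^{-(n+1)}$ near $t=0$, which is a well-defined homogeneous distribution because $n+1$ is even.

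Next I would test the identity of Theorem~\ref{rel.poisson} against a rescaled cutoff $\varphi_{r}(t) = r^{n+1}\varphi(rt)$ for a fixed $\varphi \in \mathcal{C}_{c}^{\infty}(\mathbb{R}\setminus\{0\})$ chosen so that its pairing with the prescribed singularity is nonzero; the distributional identity on $\mathbb{R}\setminus\{0\}$ is then legitimately applied. The LHS pairing is bounded below by $c|V_{\mathrm{rel}}|\,r^{n+1}$ for large $r$. For the RHS, each exponential $e^{(\zeta-n/2)|t|}$ with $\Re\zeta\le n/2$ is bounded on $\mathrm{supp}\,\varphi_{r}$ (the finitely many eigenvalue contributions with $\Re\zeta>n/2$ produce a lower-order term), and the pairing is controlled by
\[
\sum_{\zeta\in\mathcal{R}_{g_{0}}\cup\mathcal{R}_{g_{1}}}|\hat{\varphi}_{r}(\zeta-n/2)|.
\]
The rapid Paley--Wiener decay of $\hat{\varphi}_{r}$ off a window of size $O(r)$, combined with the upper bound of Proposition~\ref{upper.bound}, bounds this sum by $C[N_{g_{0}}(Cr)+N_{g_{1}}(Cr)]$, yielding the stated inequality after adjusting constants.

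The main obstacle will be that the right side of Theorem~\ref{rel.poisson} is a signed difference of exponential sums rather than a positive superposition, so a positive Tauberian theorem cannot be invoked to separately bound $N_{g_{0}}$ and $N_{g_{1}}$. This is precisely why the conclusion only involves the sum $N_{g_{0}}+N_{g_{1}}$: one passes to the triangle inequality in the pairing bound and works with the combined counting function. This maneuver mirrors the Tauberian step in the proof of \cite[Thm.~1.3]{Borthwick:2008} for the unperturbed counting function, and it applies here once the singularity coefficient has been identified with $V_{\mathrm{rel}}$ via the Birman--Krein identity.
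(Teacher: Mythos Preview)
Your approach is correct and is essentially the same as the paper's. Both arguments test the relative Poisson formula of Theorem~\ref{rel.poisson} against a rescaled bump function supported in $\mathbb{R}\setminus\{0\}$, identify the leading asymptotic of the left side with a constant times $V_{\mathrm{rel}}\,r^{n+1}$, apply the triangle inequality on the right side to pass to $\mathcal{R}_{g_0}\cup\mathcal{R}_{g_1}$, and then use the Paley--Wiener decay of the transformed test function together with the upper bound of Proposition~\ref{upper.bound} in a Stieltjes-integral Tauberian step. The only substantive difference is how the coefficient $V_{\mathrm{rel}}$ is identified: the paper invokes \cite[Lemma~9.2]{Borthwick:2008} directly on the wave-trace side, whereas you route through Proposition~\ref{birman.krein} and Corollary~\ref{weyl.asymp} on the scattering-determinant side; these are equivalent since $\Upsilon$ is the Fourier transform of the continuous wave $0$-trace. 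One cosmetic slip: with your normalization $\varphi_r(t)=r^{n+1}\varphi(rt)$, the pairing of $\varphi_r$ with the $|t|^{-(n+1)}$ singularity scales like $r^{2n+1}$, not $r^{n+1}$, and correspondingly $|\hat\varphi_r|$ carries an extra $r^n$; these factors cancel in the final inequality, so the conclusion is unaffected.
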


\begin{proof}
For $\phi\in C_{0}^{\infty}(\mathbb{R}_{+})$ and $\lambda>0$ we can apply
\cite[Lemma~9.2]{Borthwick:2008} to obtain from Theorem~\ref{rel.poisson} the
asymptotic
\begin{align*}
\left\vert \sum_{\zeta\in\mathcal{R}_{g_{1}}}\widehat{\phi}(i(\zeta-\tfrac
{n}{2})/\lambda)-\sum_{\zeta\in\mathcal{R}_{g_{0}}}\widehat{\phi}%
(i(\zeta-\tfrac{n}{2})/\lambda)\right\vert  &  =c_{n}%
\>\bigl|\operatorname{Vol}(K,g_{1})-\operatorname{Vol}(K,g_{0})\bigr|\>\lambda
^{n+1}\\
&  +\mathcal{O}(\lambda^{n-1}),
\end{align*}
as $\lambda\rightarrow\infty$. Since $\phi$ is compactly supported, its
Fourier transform satisfies analytic estimates,
\[
|\hat{\phi}(\xi)|\leq C_{m}(1+|\xi|)^{-m},
\]
for $m\in\mathbb{N}$. Thus for $\lambda$ sufficiently large and setting
$m=n+2$,
\[%
\begin{split}
c_{n}\>\bigl|\operatorname{Vol}(K,g_{1})-\operatorname{Vol}(K,g_{0}%
)\bigr|\>\lambda^{n+1}  &  \leq\sum_{\zeta\in\mathcal{R}_{g_{0}}%
\cup\mathcal{R}_{g_{1}}}|\widehat{\phi}(i(\zeta-\tfrac{n}{2})/\lambda)|\\
&  \leq C\sum_{\zeta\in\mathcal{R}_{g_{0}}\cup\mathcal{R}_{g_{1}}}%
(1+|\zeta|/\lambda)^{-n-2},
\end{split}
\]
Then, if we let $M(r)=N_{g_{0}}(r)+N_{g_{1}}(r)$, we have
\[%
\begin{split}
c_{n}\>\bigl|\operatorname{Vol}(K,g_{1})-\operatorname{Vol}(K,g_{0}%
)\bigr|\>\lambda^{n+1}  &  \leq C\int_{0}^{\infty}(1+r/\lambda)^{-n-2}%
\>dM(r)\\
&  \leq C\int_{0}^{\infty}(1+r)^{-n-3}\>M(\lambda r)\>dr.
\end{split}
\]
Splitting the integral at $b$ and using the upper bound from
Proposition~\ref{upper.bound} to control the $[b,\infty)$ piece then yields
\[
c_{n}\>\bigl|\operatorname{Vol}(K,g_{1})-\operatorname{Vol}(K,g_{0}%
)\bigr|\>\lambda^{n+1}\leq CM(\lambda b)+C\lambda^{n+1}b^{-1}.
\]
Taking $b$ sufficiently large completes the proof.
\end{proof}

We conclude this section with:

\begin{proof}
[Proof of part (i) of Theorem \ref{thm:main}:]Suppose that $\dim(X)$ is even.
If $\mathcal{G}(g_{0},K)$ contains resonance-deficient metrics, then we may
redefine $g_{0}$ to assume that this background metric is resonance-deficient.
Observe that for a fixed compact subset $K$ of $X$, the function%
\begin{align*}
\mathcal{G}(g_{0},K)  &  \mapsto\mathbb{R}\\
g  &  \rightarrow%
\operatorname{0-Vol}%
(X,g)
\end{align*}
is continuous. Moreover, if we fix $g\in\mathcal{G}(g_{0},K)$ and $\varphi
\in\mathcal{C}_{0}^{\infty}(K)$, and consider the family%
\[
g_{t}=e^{t\varphi}g,
\]
we have
\[
\left.  \frac{d}{dt}\right\vert _{t=0}\left(
\operatorname{0-Vol}%
(X,g_{t})\right)  =\int\varphi~dg
\]
which is nonzero for any nonzero, nonnegative $\varphi\in\mathcal{C}%
_{0}^{\infty}(K)$.

By continuity,
\[
\mathcal{S}=\left\{  g\in\mathcal{G}(g_{0},K):%
\operatorname{0-Vol}%
(X,g)\neq%
\operatorname{0-Vol}%
(X,g_{0})\right\}
\]
is open in $\mathcal{G}(g_{0},K)$. By the conformal perturbation argument
above, $\mathcal{S}$ is also dense in $\mathcal{G}(g_{0},K)$. It follows from
Corollary \ref{cor:hominis} that $\mathcal{S}\subset\mathcal{M}(g_{0},K)$,
proving Theorem \ref{thm:main}(i).
\end{proof}

\section{A metric perturbation with optimal order of growth}

\label{sec:tumor}

In this section, we prove:

\begin{theorem}
\label{thm:tumor}Suppose that $\left(  X,g_{0}\right)  $ is hyperbolic near
infinity and $\dim(X)=n+1$. Suppose that $N_{g_{0}}(r)=o(r^{n+1})$ as
$r\rightarrow\infty$, let $x_{0}\in X$. There is a Riemannian metric $g_{1}$
on $X$ with the following properties: $g_{1}=g_{0}$ outside $B(x_{0},3)$, and
$N_{g_{1}}(r)\geq Cr^{n+1}$ for a strictly positive constant $C$ and
sufficiently large $r$.
\end{theorem}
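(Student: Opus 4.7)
The plan is to carry over the Sj\"ostrand--Zworski \cite{SZ:1993} \textquotedblleft tumor\textquotedblright\ construction to the hyperbolic-near-infinity setting, using the relative Poisson formula of Theorem~\ref{rel.poisson} in place of the Euclidean wave trace. Inside $B(x_0,3)$ I would modify $g_0$ so that the geodesic flow of the resulting metric $g_1$ possesses a clean degenerate family of closed geodesics of a common primitive length $T>0$, all lying strictly inside the perturbation region; a convenient realization, as in \cite{SZ:1993}, is to insert a warped-product \textquotedblleft bubble\textquotedblright\ whose neck is smoothly interpolated back to $g_0$ inside $B(x_0,3)\setminus B(x_0,1)$. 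The resulting $g_1$ is Riemannian, compactly supported in its perturbation of $g_0$, and agrees with $g_0$ outside $B(x_0,3)$.

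Next I would identify a sufficiently strong singularity of $\zerotr\cos\!\bigl(t\sqrt{\Delta_{g_1}-n^{2}/4}\bigr)$ at $t=T$. The Duistermaat--Guillemin trace formula, applied to the clean submanifold of closed orbits inside the bubble, yields a conormal singularity at $t=T$ whose principal part has Sobolev order governed by the dimension of the family; any additional closed orbits of length $T$ introduced by the neck are generically isolated and contribute only strictly weaker conormal singularities, which cannot cancel the family's leading contribution. Because the $0$-regularization only modifies contributions polyhomogeneous in $x$ at infinity, it affects only the singular support at $t=0$, so the singularity at $t=T\ne 0$ is unaffected by passage to the $0$-trace.

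By Theorem~\ref{rel.poisson} the singularity at $t=T$ must match the corresponding singularity of $\tfrac{1}{2}\sum_{\zeta\in\mathcal{R}_{g_1}}e^{(\zeta-n/2)|t|}-\tfrac{1}{2}\sum_{\zeta\in\mathcal{R}_{g_0}}e^{(\zeta-n/2)|t|}$. The hypothesis $N_{g_0}(r)=o(r^{n+1})$, combined with the upper bound of Proposition~\ref{upper.bound} and the confinement of resonances to a polynomial strip about $\Re\zeta=n/2$ implicit in the resolvent construction, implies that when paired with a mollifier $\hat\varphi(i(\zeta-\tfrac{n}{2})/\lambda)$ whose $\varphi$ is supported near $T$ (and vanishes near $0$), the $g_0$-side is $o(\lambda^{n+1})$ as $\lambda\to\infty$. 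The full $\lambda^{n+1}$-strength singularity must therefore come from the $g_1$-side, and a Tauberian argument patterned on the proof of Corollary~\ref{cor:hominis}, but with the test function concentrated near $T$ rather than at the origin, converts this into the desired lower bound $N_{g_1}(r)\geq Cr^{n+1}$.

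The main obstacle is the degenerate trace-formula step: since the bubble is only a finite piece of a model and transitions to $g_0$ in the neck, one cannot invoke the classical closed-manifold formulae verbatim. I would address this with a microlocal calculation using a partition of unity subordinate to the bubble versus the neck, showing that the principal symbol of the wave trace at $t=T$ is determined entirely by the closed orbits interior to the bubble (where $g_1$ coincides with the exact warped product), while the neck contributions are of strictly lower Sobolev order and hence cannot cancel the leading term. Granting this, the passage to the resonance side is routine, given the relative Poisson formula established in Section~\ref{sec:relwavetrace1} and the uniform upper bound of Section~\ref{sec:complexmetric1}.
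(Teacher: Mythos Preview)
Your plan follows the paper's architecture: glue in a round sphere, use the relative Poisson formula (Theorem~\ref{rel.poisson}) together with the hypothesis on $N_{g_0}$ to reduce matters to a lower bound on $\widehat{\varphi u_1}(\lambda)$ for $\varphi$ supported near the sphere period $T$, and then conclude via a Tauberian argument. Two correctable slips: the relevant order is $\lambda^{n}$, not $\lambda^{n+1}$---the sphere's wave-trace singularity at $t=2\pi R$ is $c_nR^n\delta^{(n)}(t-2\pi R)$, and it is Theorem~\ref{thm:tauber} (not the argument of Corollary~\ref{cor:hominis}) that converts a $\lambda^{n}$ lower bound on $\widehat{\varphi u_1}$ into an $r^{n+1}$ lower bound on $N_{g_1}$. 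Second, resonances are \emph{not} confined to any polynomial strip, nor is this needed: $\widehat{\varphi u_0}(\lambda)=o(\lambda^{n})$ follows from $N_{g_0}(r)=o(r^{n+1})$ directly, since $e^{(\zeta-n/2)t}$ is exponentially damped for $\Re\zeta\ll n/2$ on $\operatorname{supp}\varphi\subset(0,\infty)$.

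The substantive gap is your treatment of the neck. You assert that the neck contributes at \emph{strictly lower Sobolev order}, so that no large parameter is needed. But after gluing, the closed $g_1$-orbits of period $T=2\pi R$ form only an \emph{open} subset of $S^*\mathbb S^{n+1}(R)$ (the great circles missing the cap), and the clean-intersection hypothesis of Duistermaat--Guillemin fails at its boundary; one cannot invoke the trace formula verbatim there. The paper does not attempt to show the neck error is $O(\lambda^{n-1})$. Instead it compares $\operatorname{Tr}(U(t)\psi)$ with the trace on the \emph{full} sphere $\mathbb S^{n+1}(R)$ (where the classical formula applies and gives $c_nR^n\lambda^{n}$), and bounds every discrepancy by the crude estimate of Lemma~\ref{lemma.2}: at best $O(V\lambda^{n})$, where $V$ is the phase-space volume of the microlocal cutoff. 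By Lemma~\ref{lemma:phasespace1} the ``bad'' set of sphere geodesics entering the cap has $V=O(R)$; the localization away from the background (Lemmas~\ref{lemma:wt1}--\ref{lemma:wt2}) likewise costs $O(T\lambda^{n})=O(R\lambda^{n})$. Thus all error terms are of the \emph{same} order $\lambda^{n}$ as the main term, and what makes the argument close is the large parameter: $c_nR^n$ dominates $O(R)$ only for $R\gg1$. Your proposal has no such parameter, and without it---or a genuinely new analysis of the open periodic family and its boundary---the leading singularity cannot be isolated from the errors.
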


The hypothesis of Theorem \ref{thm:tumor} implies that the distribution
$u_{0}(t)$ on $\mathbb{R}\backslash\left\{  0\right\}  $ defined by%
\begin{equation}
u_{0}(t)=\frac{1}{2}\sum_{\xi\in\mathcal{R}_{g_{0}}}e^{(\zeta-n/2)\left\vert
t\right\vert }, \label{eq:u0}%
\end{equation}
where $\mathcal{R}_{0}$ is the set of resolvent resonances for $g_{0}$,
satisfies%
\begin{equation}
\left\vert \widehat{\varphi u_{0}}(\lambda)\right\vert =o(\lambda^{n})
\label{eq:0small}%
\end{equation}
for any $\varphi\in\mathcal{C}_{0}^{\infty}(\mathbb{R}^{+})$. Let
\begin{equation}
u_{1}(t)=\sum_{\xi\in\mathcal{R}_{g_{1}}}e^{(\zeta-n/2)\left\vert t\right\vert
}. \label{eq:u1}%
\end{equation}
where $\mathcal{R}_{1}$ is is the set of resolvent resonances for $g_{1}$.
Following ideas of Sj\"{o}strand-Zworski \cite{SZ:1993}, we will construct a
perturbed metric which, geometrically, attaches a large sphere to $X$ at
$x_{0}$, and use wave trace estimates on $u_{1}-u_{0}$ and the following
Tauberian theorem \cite[p. 848]{SZ:1993} to prove a lower bound on the
counting function for the resonances of the perturbed metric.

\begin{theorem}
\label{thm:tauber} \cite{SZ:1993} Let $u_{1}\in\mathcal{D}^{\prime}%
(\mathbb{R})$ be the distribution associated with the resolvent resonance set
$\mathcal{R}_{g_{1}}$ as in (\ref{eq:u1}). Suppose that for some constants
$b,d>0$ and every $\varphi\in\mathcal{C}_{0}^{\infty}(\mathbb{R}^{+})$
supported in a sufficiently neighborhood of $d$ with $\varphi(d)=1$ and
$\widehat{\varphi}(\tau)\geq0,$ we have%
\[
\left\vert \widehat{\varphi u_{1}}(\lambda)\right\vert \geq(b-o(1))\lambda^{n}%
\]
as $\lambda\rightarrow+\infty$. Then, the resonance counting function
satisfies%
\[
N_{g_{1}}(r)\geq\left(  B-o(1)\right)  r^{n+1},~~B=b/(\pi(n+1)).
\]

\end{theorem}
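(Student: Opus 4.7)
The plan is to derive the lower bound on $N_{g_1}$ by integrating the hypothesized growth of $|\widehat{\varphi u_1}(\lambda)|$ from $0$ to $R$, and comparing the resulting integral against an upper bound obtained by expanding $\varphi u_1$ as a sum over resonances.

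Since $\varphi\in\mathcal{C}_0^\infty(\mathbb{R}^+)$ is supported in $(0,\infty)$, the absolute value in \eqref{eq:u1} disappears on the support of $\varphi$ and term-by-term Fourier transformation yields
\[
\widehat{\varphi u_1}(\lambda)=\sum_{\zeta\in\mathcal{R}_{g_1}}\widehat\varphi\bigl(\lambda+i(\zeta-\tfrac{n}{2})\bigr),
\]
where $\widehat\varphi$ is entire of exponential type. Because $\mathrm{supp}\,\varphi\subset(0,\infty)$, iterated integration by parts gives the Paley--Wiener estimate $|\widehat\varphi(\mu+i\tau)|\leq C_N(1+|\mu|+|\tau|)^{-N}$ for all $\tau\leq 0$ and all $N$. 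Since $\Im[\lambda+i(\zeta-\tfrac{n}{2})]=\Re(\zeta)-\tfrac{n}{2}\leq 0$ for all but the finitely many resonances associated to $L^2$-eigenvalues, this decay applies to essentially every term.

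Next I integrate both sides. The hypothesis yields
\[
\int_{\lambda_0}^R |\widehat{\varphi u_1}(\lambda)|\,d\lambda \geq \tfrac{b-o(1)}{n+1}\,R^{n+1},
\]
while the resonance expansion, after Fubini, gives
\[
\int_{\lambda_0}^R |\widehat{\varphi u_1}(\lambda)|\,d\lambda \leq \sum_{\zeta}\int_{\lambda_0}^R |\widehat\varphi(\lambda+i(\zeta-\tfrac{n}{2}))|\,d\lambda.
\]
Writing $\zeta=\tfrac{n}{2}+\alpha+i\beta$ with $\alpha\leq 0$, the shift $\mu=\lambda-\beta$ converts the inner integral into $\int_{I}|\widehat\varphi(\mu+i\alpha)|\,d\mu$, uniformly bounded by a constant via Paley--Wiener. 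Resonances with $|\beta|\leq R+K$ contribute at most $C\cdot N_{g_1}(R+K)$ (the polynomial bound $N_{g_1}(r)\leq Cr^{n+1}$ from Proposition \ref{upper.bound} absorbs any with large $|\alpha|$), while resonances far from this strip contribute only a tail of order $O(R^{n+1-N})$ for $N$ large. Combining the two inequalities produces $N_{g_1}(r)\geq B'\,r^{n+1}(1-o(1))$ for some positive $B'$ and all large $r$.

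The main obstacle is pinning down the sharp constant $B=b/(\pi(n+1))$ rather than an unspecified $B'>0$. This requires replacing the lossy triangle inequality above by a positivity-preserving evaluation that exploits the hypothesis $\widehat\varphi\geq 0$ on the real axis: one pairs the resonance sum against $\widehat\varphi$ itself rather than its modulus, so that the Fourier-inversion normalization $\varphi(d)=(2\pi)^{-1}\int\widehat\varphi(\tau)e^{id\tau}\,d\tau=1$ can be used to fix the overall constant. The factor of $\pi$ in $B$ then emerges from this $2\pi$, while the factor $(n+1)^{-1}$ comes from $\int_0^R \lambda^n\,d\lambda$. This is the Tauberian argument of Sj\"ostrand--Zworski \cite{SZ:1993}.
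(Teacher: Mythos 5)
Your main line---expanding $\widehat{\varphi u_1}(\lambda)=\sum_{\zeta}\widehat\varphi\bigl(\lambda+i(\zeta-\tfrac n2)\bigr)$, using Paley--Wiener decay in the half-plane $\Re\zeta\le n/2$ together with the a priori bound $N_{g_1}(r)\le Cr^{n+1}$ to justify termwise transformation and control tails, then integrating the hypothesis over $[\lambda_0,R]$---is exactly the Sj\"ostrand--Zworski Tauberian argument that the paper itself does not prove but imports by citation from \cite[p.~848]{SZ:1993}. This crude version is sound and yields $N_{g_1}(r)\ge B'r^{n+1}$ for some unspecified $B'>0$, which is in fact all that the application in Theorem \ref{thm:tumor} uses.

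As a proof of the statement as written, however, with $B=b/(\pi(n+1))$, there is a genuine gap, and the sketch you give for the sharp constant does not close it. First, the mechanism is inverted: with $\operatorname{supp}\varphi\subset(0,\infty)$, Fourier inversion gives $\int\widehat\varphi(\tau)\,d\tau=2\pi\varphi(0)=0$, so taken literally the hypotheses $\widehat\varphi\ge0$ and $\varphi(d)=1$ are incompatible; the identity $2\pi\varphi(d)=\int\widehat\varphi(\tau)e^{id\tau}\,d\tau$ that you invoke only bounds $\int\widehat\varphi$ from \emph{below}, whereas the sharp constant requires the \emph{upper} bound $\int_{\mathbb{R}}\bigl|\widehat\varphi(\mu+i\alpha)\bigr|\,d\mu\le 2\pi+o(1)$, uniformly over the relevant $\alpha\le0$, per resonance. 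The intended reading (compare $\varphi_{\varepsilon,T}(t)=\varphi((t-T)/\varepsilon)$ in Section \ref{sec:tumor}) is that positivity is imposed on the Fourier transform of the \emph{centered} profile, so that $|\widehat\varphi(\mu)|$ is a nonnegative function integrating to $2\pi$ times the profile's central value; one then still needs a uniformity argument in $\alpha$ (shrinking the support of the profile) to retain the $2\pi(1+o(1))$ per-resonance bound. Second, even granting that bound, your computation produces only $b/(2\pi(n+1))$: the missing factor of $2$ comes from the symmetry $\zeta\in\mathcal{R}_{g_1}\Rightarrow\bar\zeta\in\mathcal{R}_{g_1}$ (equivalently, from running the argument also as $\lambda\to-\infty$, since $u_1$ and $\varphi$ are real), which your proposal never uses. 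So what you actually establish is the theorem with some positive constant; the stated constant requires these two further ingredients from \cite{SZ:1993}.
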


Thus, we need to choose $g_{1}$ so that $\left\vert \widehat{\varphi u_{1}%
}(\lambda)\right\vert \geq C\lambda^{n}$ as $\lambda\rightarrow+\infty$. By
(\ref{eq:0small}) it suffices to prove the same estimate for $u_{1}-u_{0}$. It
follows from the relative Poisson formula, Theorem \ref{rel.poisson}, that
$u_{1}(t)-u_{0}(t)$ is a difference of wave traces.

Sj\"{o}strand and Zworski used this idea in the Euclidean setting to construct
scattering metrics which are Euclidean near infinity and whose resonance
counting function has optimal order of growth. In our setting, the background
metric is more complicated, so we begin with some perturbative estimates on
the wave trace.

Let $x_{0}\in X$ and denote by $B(x_{0},3)$ the ball of radius $3$ in the
\emph{unperturbed} metric. We consider metrics $g_{0}$ and $g_{1}$ on a
manifold $X$ so that $g_{1}=g_{0}$ on $X\backslash B(x_{0},3)$ and both
metrics are hyperbolic near infinity. We will make a specific choice of
$g_{1}$ later. We denote by $\Delta_{0}$ and $\Delta_{1}$ the respective
positive Laplace-Beltrami operators and set%
\[
Q_{0}=\left(
\begin{array}
[c]{cc}%
0 & I\\
-(\Delta_{0}-n^{2}/4) & 0
\end{array}
\right)  ,~~Q_{1}=\left(
\begin{array}
[c]{cc}%
0 & I\\
-(\Delta_{1}-n^{2}/4) & 0
\end{array}
\right)  ,
\]
where $n^{2}/4$ is the bottom of the continuous spectrum. These operators are
the infinitesimal generators of wave groups $U_{0}(t)=\exp(tQ_{0})$ and
$U_{1}(t)=\exp(tQ_{1})$ acting on the Hilbert spaces of initial data
$(v_{0},v_{1})$ of finite energy, defined as follows. Let $\left(  Y,g\right)
$ denote either $(X,g_{0})$ or $\left(  X,g_{1}\right)  $. Let $\mathcal{H}$
denote the completion of $\mathcal{C}_{0}^{\infty}(Y)\oplus\mathcal{C}%
_{0}^{\infty}(Y)$ in the norm%
\[
\left\Vert (v_{0},v_{1})\right\Vert _{Y}=\left\Vert \nabla v_{0}\right\Vert
+\left\Vert v_{1}\right\Vert
\]
where $\left\Vert ~\cdot~\right\Vert $ denotes the $L^{2}(Y,g)$ norm. Letting
$\dot{H}^{1}(Y,g)$ denote the completion of $\mathcal{C}_{0}^{\infty}(Y)$ in
the norm $\left\Vert \nabla(~\cdot~)\right\Vert $ modulo constants, we have
$\mathcal{H}=\dot{H}^{1}(Y,g)\oplus L^{2}(Y,g)$. An important remark (see, for
example, \cite[Chapter IV, Lemma 1.1]{LP:1989}) is that $\dot{H}%
^{1}(Y,g)\subset L_{\mathrm{loc}}^{2}(Y,g)$ and that the Sobolev bound%
\[
\left(  \int\left\vert v\right\vert ^{2(n+1)/(n-1)}dg\right)  ^{(n-1)/2(n+1)}%
\leq c\left(  \int\left\vert \nabla v\right\vert ^{2}\right)  ^{1/2}%
\]
holds (recall $\dim Y=n+1$). The wave groups $U_{0}(t)$ and $U_{1}(t)$ act as
unitary groups on their respective Hilbert spaces.

To make perturbative estimates, it is convenient to use the natural unitary
map $J:L^{2}(X,dg_{0})\rightarrow L^{2}(X,dg_{1})$ and define $U(t)=J^{\ast
}U_{1}(t)J$. The operators $U(t)$ are a unitary group on $\mathcal{H}_{0}$
with infinitesimal generator%
\[
Q=\left(
\begin{array}
[c]{cc}%
0 & I\\
-(\Delta-n^{2}/4) & 0
\end{array}
\right)
\]
where $\Delta$ is a second-order elliptic differential operator with
$\Delta=\Delta_{0}$ on functions with support contained in $X\backslash
B(x_{0},3)$.

We will be interested in Fourier transforms of the wave trace of the form
(\ref{eq:0small}) where $\varphi$ is localized near the period $T$ of a closed
geodesic. Let $\varphi\in\mathcal{C}_{0}^{\infty}([-1,1])$ with $\varphi
(0)=1/(2\pi)$ and $\widehat{\varphi}(\tau)\geq0$, and define%
\begin{equation}
\varphi_{\varepsilon,T}(t)=\varphi\left(  \frac{t-T}{\varepsilon}\right)  .
\label{eq:locfnc1}%
\end{equation}
Let $D(t)$ be the distribution%
\[
D(t)=%
\operatorname{0-Tr}%
(U(t)-U_{0}(t))
\]
and consider the Fourier transform
\begin{equation}
\Phi(\lambda)=\int e^{-i\lambda t}\varphi_{\varepsilon,T}(t)~D(t)~dt.
\label{eq:PhiLambda}%
\end{equation}
which is the difference of $\widehat{\varphi_{\varepsilon,T}u_{1}}$ and
$\widehat{\varphi_{\varepsilon,T}u_{0}}$. We will first isolate the dominant
term in $\Phi(\lambda)$ for a arbitrary compactly supported perturbation, and
then make a specific choice of $g_{1}$ that produces the desired
$\mathcal{O}(\lambda^{n})$ growth.

In what follows, it will be important to microlocalize in the unit cosphere
bundle $S^{\ast}X$. We denote by $\Pi:S^{\ast}X\rightarrow X$ the canonical
projection. For $(x,\xi)\in S^{\ast}X$, we denote by $\gamma_{t}(x,\xi)$ the
unit speed geodesic passing through $(x,\xi)$ at time zero. Unless otherwise
stated, the geodesics will be defined with respect to the perturbed metric on
$X$. Note that, on $X\backslash B(x_{0},3)$, these geodesics coincide with
those of $g_{0}$.

The first lemma allows us to localize the wave trace near the perturbation up
to controlled errors. Let $\psi\in\mathcal{C}_{0}^{\infty}(X)$ with
\begin{equation}
\psi(x)=\left\{
\begin{array}
[c]{ccc}%
1 &  & d(x_{0},x)<4\\
&  & \\
0 &  & d(x_{0,}x)>6
\end{array}
\right.  \label{eq:cutoff-psi}%
\end{equation}
where $d(~\cdot~,~\cdot~)$ is the distance in the unperturbed metric $g_{0}$.

\begin{lemma}
\label{lemma:wt1} The asymptotic formula%
\[
\Phi(\lambda)=\int e^{-i\lambda t}\varphi_{\varepsilon,T}%
(t)~\operatorname*{Tr}\left[  \left(  U(t)-U_{0}(t)\right)  \psi\right]
~dt+\mathcal{O}\left(  T\lambda^{n}\right)
\]
holds as $\lambda\rightarrow\infty$.
\end{lemma}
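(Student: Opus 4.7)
The plan is to split $\Phi(\lambda)$ according to the cutoff $\psi$ as $\Phi = \Phi_{\text{main}} + \Phi_{\text{err}}$, where $\Phi_{\text{main}}$ uses $\psi$ and $\Phi_{\text{err}}$ uses $1 - \psi$; the main piece will produce exactly the main term of the lemma, and the error piece will give the $\mathcal{O}(T\lambda^{n})$ correction.

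Because $\psi \in \mathcal{C}_{0}^{\infty}(X)$, the diagonal of the Schwartz kernel of $(U(t) - U_{0}(t))\psi$ is supported in $\operatorname{supp}(\psi)$, a compact subset of $X$. The $0$-trace regularization is therefore vacuous on this operator, so $\operatorname{0-Tr}\bigl[(U(t) - U_{0}(t))\psi\bigr] = \operatorname{Tr}\bigl[(U(t) - U_{0}(t))\psi\bigr]$, and $\Phi_{\text{main}}(\lambda)$ is exactly the integral appearing on the right-hand side of the lemma. It remains to bound $\Phi_{\text{err}}$.

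For the error, I would apply Duhamel's formula,
\begin{equation*}
U(t) - U_{0}(t) = \int_{0}^{t} U(t-s)\,(Q - Q_{0})\,U_{0}(s)\,ds,
\end{equation*}
noting that $Q - Q_{0}$ is a first-order matrix differential operator whose coefficients are supported in $B(x_{0},3)$. The compact support of $Q - Q_{0}$ makes the composite $(Q - Q_{0})\,U_{0}(s)(1-\psi)\,U(t-s)$ trace class, since its diagonal kernel is supported in $\operatorname{supp}(Q - Q_{0})$, so cyclicity of the trace applies and yields
\begin{equation*}
\operatorname{0-Tr}\bigl[(U(t) - U_{0}(t))(1-\psi)\bigr] = \int_{0}^{t} \operatorname{Tr}\bigl[(Q - Q_{0})\,U_{0}(s)(1-\psi)\,U(t-s)\bigr]\,ds.
\end{equation*}
Inserting this into $\Phi_{\text{err}}$ and exchanging the $s$- and $t$-integrations, the inner $t$-Fourier transform of $U_{0}(s)(1-\psi)U(t-s)$ against $\varphi_{\varepsilon,T}$ produces a semiclassically-localized wave kernel whose restriction to the compact set $\operatorname{supp}(Q - Q_{0})$ is bounded pointwise by $C\lambda^{n}$, by the standard local Weyl law for smoothed spectral functions of $\sqrt{\Delta - n^{2}/4}$. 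The outer $s$-integration runs over an interval of length at most $T + \varepsilon$ and so contributes the single factor of $T$, yielding $\Phi_{\text{err}}(\lambda) = \mathcal{O}(T\lambda^{n})$.

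The main obstacle lies in the last step: establishing the uniform $\mathcal{O}(\lambda^{n})$ pointwise spectral-function bound on $\operatorname{supp}(Q - Q_{0})$ for both the unperturbed propagator $U_{0}$ and the perturbed propagator $U$, and controlling the possibly delocalized factor $(1-\psi)$ that sits between them. Finite propagation speed, together with the fact that $U$ agrees with $U_{0}$ outside $B(x_{0},3)$, confines the relevant intermediate points of $U_{0}(s)(1-\psi)U(t-s)$ to a compact region for $|t| \le T + \varepsilon$, which reduces the required bound to a standard parametrix construction for $\cos(t\sqrt{\Delta - n^{2}/4})$ on a compact set together with a H\"ormander-type $L^{\infty}$ estimate on the smoothed wave kernel.
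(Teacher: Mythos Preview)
Your decomposition $\Phi=\Phi_{\mathrm{main}}+\Phi_{\mathrm{err}}$ and the reduction of the $0$-trace to an ordinary trace on the $\psi$-piece are fine, and the finite propagation speed argument that makes the diagonal of $(U(t)-U_0(t))(1-\psi)$ compactly supported is also correct (this is exactly why the paper is allowed to insert the cutoff $\chi_T$). The gap is in your last step, and it is a genuine one.

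After Duhamel and cyclicity, the object whose Fourier transform you must control is
\[
\operatorname{Tr}\!\bigl[(Q-Q_0)\,U_0(s)\,(1-\psi)\,U(t-s)\bigr].
\]
The factor $(Q-Q_0)U_0(s)(1-\psi)$ is \emph{not} a pseudodifferential operator: it is a Fourier integral operator whose canonical relation is the time-$s$ geodesic flow of $g_0$, restricted on the right to $\operatorname{supp}(1-\psi)$ and on the left to $B(x_0,3)$. The ``local Weyl law'' or H\"ormander-type bound you invoke (which in this paper is Lemma~\ref{lemma.2}) gives $\mathcal{O}(\lambda^n)$ for quantities of the form $\int e^{-i\lambda t}\chi(t-T)\operatorname{Tr}[V(t)B]\,dt$ with $B$ a zeroth-order \emph{pseudodifferential} operator; it does not apply when $B$ is replaced by an FIO. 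Indeed, for an FIO the trace $\operatorname{Tr}[V(u)\cdot\text{FIO}]$ picks up periodic-orbit-type contributions from the fixed points of the composed canonical relation, and there is no a priori reason these are $\mathcal{O}(\lambda^n)$ uniformly in $s\in[0,T+\varepsilon]$. (Incidentally, $Q-Q_0$ is second order, not first, which makes the naive trace-norm estimate even worse.) Your closing paragraph essentially acknowledges that this step is unfinished; the difficulty is not merely technical bookkeeping but the absence of the right structural input.

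The paper handles $\Phi_{\mathrm{err}}$ by a different mechanism that stays within the pseudodifferential calculus. It introduces a zeroth-order $\Psi$DO $C$ that microlocalizes to cotangent directions whose $g_0$-geodesics enter $B(x_0,5)$ at some time in $[-1,4T]$, and writes
\[
(U(t)-U_0(t))(1-\psi)\chi_T=\underbrace{(U(t)-U_0(t))(I-C)(1-\psi)\chi_T}_{G_1(t)}+\underbrace{(U(t)-U_0(t))C(1-\psi)\chi_T}_{G_2(t)}.
\]
For $G_1$, propagation of singularities shows $\theta\,U_0(s)(I-C)$ is smoothing whenever $\theta$ is a cutoff to $B(x_0,15/4)$; plugging this into Duhamel makes $G_1(t)$ smoothing, hence $\mathcal{O}(\lambda^{-\infty})$. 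For $G_2$, the operator $C(1-\psi)\chi_T$ is a genuine zeroth-order $\Psi$DO whose essential support has $S^*X$-volume $\mathcal{O}(T)$ (same flow-volume argument as Lemma~\ref{lemma:phasespace1}), so Lemma~\ref{lemma.2} applied separately to the $U(t)$ and $U_0(t)$ terms gives $\mathcal{O}(T\lambda^n)$. The point is that the factor $T$ enters through the phase-space volume of the microlocalizer $C$, not through the length of a Duhamel time-integral; this is what allows the estimate to be reduced to the standard $\Psi$DO trace bound.
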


\begin{proof}
First, by finite propagation speed, it follows that $U(t)f=U_{0}(t)f$ for any
$t\in\operatorname*{supp}\varphi_{\varepsilon,T}$ and $f$ with support a
distance at least $2T$ from $B(x_{0},3)$. Hence, if
\[
\chi_{T}(x)=\left\{
\begin{array}
[c]{ccc}%
1 &  & d(x_{0},x)<2T\\
&  & \\
0 &  & d(x_{0,}x)>3T
\end{array}
\right.
\]
(where $d(~\cdot~,~\cdot~)$ is the distance in the unperturbed metric, and
$T>2$ say), we have%
\[
\Phi(\lambda)=\int e^{-i\lambda t}\varphi_{\varepsilon,T}%
(t)~\operatorname*{Tr}\left[  \left(  U(t)-U_{0}(t)\right)  \chi_{T}\right]
~dt.
\]
It suffices to show that
\begin{equation}
\int e^{-i\lambda t}\varphi_{\varepsilon,T}(t)~\operatorname*{Tr}\left[
\left(  U(t)-U_{0}(t)\right)  (1-\psi)\chi_{T}\right]  ~dt=\mathcal{O}%
(T\lambda^{n}) \label{eq:wt1}%
\end{equation}
since $\psi\chi_{T}=\psi$. Let $C\in\Psi_{\mathrm{phg}}^{0}(X)$ be a
pseudodifferential operator with the following properties:\footnote{See
Appendix \ref{app:wavetrace1} for the definition of the essential support $%
\operatorname{S-ES}%
$ of a pseudodifferential operator.}%
\begin{align}%
\operatorname{S-ES}%
(C)  &  \subset\left\{  (x,\xi)\in S^{\ast}X:\Pi\gamma_{t}(x,\xi)\in
B(x_{0},5),~\exists t\in\lbrack-1,4T]\right\}  ,\label{eq:ESC}\\
& \nonumber\\%
\operatorname{S-ES}%
(I-C)  &  \subset\left\{  (x,\xi)\in S^{\ast}X:\Pi\gamma_{t}(x,\xi)\notin
B(x_{0},4),~\forall t\in\lbrack-1,4T]\right\}  \label{eq:ESI-C}%
\end{align}
where $I$ denotes the identity operator, and the geodesics and balls are
understood to be defined with respect to $g_{0}$. We split%
\[
\left(  U(t)-U_{0}(t)\right)  (1-\psi)\chi_{T}=G_{1}(t)+G_{2}(t)
\]
where%
\begin{align*}
G_{1}(t)  &  =\left(  U(t)-U_{0}(t)\right)  \left(  I-C\right)  \left(
1-\psi\right)  \chi_{T},\\
G_{2}(t)  &  =\left(  U(t)-U_{0}(t)\right)  C\left(  1-\psi\right)  \chi_{T}.
\end{align*}

First, we claim that $G_{1}(t)$ is a smoothing operator for $t\in
\operatorname*{supp}\left(  \varphi_{\varepsilon,T}\right)  $. To see this,
note that $G_{1}(0)=0$ so by the Fundamental Theorem of Calculus%
\[
G_{1}(t)=\int_{0}^{t}U(t-s)(Q-Q_{0})U_{0}(s)\left(  I-C\right)  \chi
_{T}(1-\psi)~ds.
\]
Note that $Q-Q_{0}=0$ outside $B(x_{0},3)$, and let $\theta\in C^{\infty}(X)$
with
\[
\theta(x)=\left\{
\begin{array}
[c]{cl}%
1 & x\in B(x_{0},7/2),\\
& \\
0 & x\notin B(x_{0},15/4).
\end{array}
\right.
\]
(where again the balls are defined with respect to $g_{0}$). By the
propagation of singularities and (\ref{eq:ESI-C}), the operator $\theta
U_{0}(s)(I-C)$ has a smooth kernel for all $t\in\lbrack0,2T]$. Combining these
observations we see that%
\[
G_{1}(t)=\int_{0}^{t}U(t-s)(Q-Q_{0})\theta U_{0}(s)(I-C)\chi_{T}(1-\psi)~ds
\]
is a smoothing operator for $t\in\lbrack0,2T]$. It follows that
\begin{equation}
\int e^{-i\lambda t}\varphi_{\varepsilon,T}(t)~\operatorname*{Tr}%
G_{1}(t)~dt=\mathcal{O}\left(  \lambda^{-\infty}\right)  . \label{eq:wt11}%
\end{equation}

Next, we consider $G_{2}(t)$. The operator $C_{T}=C\left(  1-\psi\right)
\chi_{T}$ has $%
\operatorname{S-ES}%
(C_{T})$ contained in a subset of $S^{\ast}X$ having volume $\mathcal{O}(T)$
(compare Lemma \ref{lemma:phasespace1} below; here volume is unambiguously
given by $g_{0}$ since $\pi\left(
\operatorname{S-ES}%
(C_{T})\right)  $ lies away from the metric perturbation). We can then deduce
that
\begin{equation}
\int e^{-i\lambda t}\varphi_{\varepsilon,T}(t)~\operatorname*{Tr}%
G_{2}(t)~dt=\mathcal{O}\left(  T\lambda^{n}\right)  \label{eq:wt12}%
\end{equation}
by applying Lemma \ref{lemma.2} to the two respective terms involving $U(t)$
and $U_{0}(t)$. The estimate (\ref{eq:wt1}) follows from (\ref{eq:wt11}) and
(\ref{eq:wt12}).
\end{proof}

Next, we note:

\begin{lemma}
\label{lemma:wt2}The estimate%
\[
\int e^{-i\lambda t}\varphi_{\varepsilon,T}(t)~\operatorname*{Tr}\left[
U_{0}(t)\psi\right]  ~dt=\mathcal{O}_{\varepsilon,\psi}(\lambda^{n})
\]
holds.
\end{lemma}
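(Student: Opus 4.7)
The plan is to recognize this estimate as a direct specialization of the phase-space volume bound already invoked in the proof of Lemma~\ref{lemma:wt1}. There, for a zero-order pseudodifferential operator $C_T$ with essential support of phase-space volume $\mathcal{O}(T)$, Lemma~\ref{lemma.2} from the appendix was applied to obtain
\[
\int e^{-i\lambda t}\varphi_{\varepsilon,T}(t)\operatorname{Tr}[U_0(t) C_T]\, dt = \mathcal{O}(T\lambda^n),
\]
and analogously for $U(t)$. In effect, the lemma controls such Fourier transforms by $\operatorname{Vol}(\Pi(\operatorname{S-ES}(B)))\,\lambda^n$ for suitable cutoffs $B$.

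Here I would simply apply the same lemma with $B = \psi$. Since multiplication by $\psi \in \mathcal{C}_0^\infty(X)$ is a zero-order pseudodifferential operator whose essential support projects to $\operatorname{supp}\psi \subset B(x_0,6)$, its phase-space volume is bounded by a constant $C_\psi$ depending only on $\psi$, and in particular independent of $T$ and $\lambda$. Feeding this into the same estimate yields exactly
\[
\int e^{-i\lambda t}\varphi_{\varepsilon,T}(t)\operatorname{Tr}[U_0(t)\psi]\, dt = \mathcal{O}_{\varepsilon,\psi}(\lambda^n),
\]
as $\lambda \to \infty$. Equivalently, via the spectral representation $U_0(t) = \cos(tA_0)\oplus\cdots$ with $A_0 = \sqrt{\Delta_{g_0}-n^2/4}$, the integral computes a smooth spectral window of width $\mathcal{O}(1/\varepsilon)$ around $\pm\lambda$ for the local density of states $\operatorname{Tr}[\psi\,\mathbf{1}_{A_0 \le \mu}]$, and the local Weyl law on $\operatorname{supp}\psi$ supplies the same bound.

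There is no real obstacle beyond substituting $\psi$ for $C_T$ in the existing argument: the microlocal parametrix for $U_0(t)$ on the time interval $\operatorname{supp}\varphi_{\varepsilon,T}$ and the ensuing stationary-phase estimate carry over unchanged, and the $\varepsilon$-dependence is absorbed into the implicit constant through the natural scaling of $\widehat{\varphi}_{\varepsilon,T}$. The point of the lemma is precisely that, replacing a cutoff of growing phase-space volume $\mathcal{O}(T)$ by one of bounded volume $C_\psi$, the bound $\mathcal{O}(T\lambda^n)$ improves to $\mathcal{O}_{\varepsilon,\psi}(\lambda^n)$, exactly as required.
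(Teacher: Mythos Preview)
Your proposal is correct and takes essentially the same approach as the paper: the paper's entire proof is the one-line statement ``An immediate consequence of Lemma~\ref{lemma.2} with $B=\psi$.'' Your elaboration on the phase-space volume of $\operatorname{S-ES}(\psi)$ being bounded (hence the constant in front of $\lambda^n$ depends only on $\psi$ and $\varepsilon$, not on $T$) is exactly the point, and the supplementary remark about the local Weyl law is a valid alternative perspective.
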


\begin{proof}
An immediate consequence of Lemma \ref{lemma.2} with $B=\psi$.
\end{proof}

Combining Lemmas \ref{lemma:wt1} and \ref{lemma:wt2}, we have shown that
\begin{equation}
\Phi(\lambda)=\Phi_{1}(\lambda)+\mathcal{O}_{\varepsilon,\psi}(T\lambda^{n}).
\label{eq:wt-loc3}%
\end{equation}
where%
\[
\Phi_{1}(\lambda)=\int e^{-i\lambda t}\varphi_{\varepsilon,T}%
(t)~\operatorname*{Tr}\left[  U(t)\psi\right]  ~dt.
\]

We now make a choice of $g_{1}$ so that $(X,g_{1})$ is isometric to a manifold
$(X_{R},g_{R})$ defined as follows. Roughly, $X_{R}$ is $X$ with a ball
excised, and a large Euclidean sphere glued in analogy to the construction in
\cite{SZ:1993}. More precisely, denote by $\mathbb{S}^{m}(R)$ the Euclidean
sphere of radius $R$ and dimension $m$ with the usual metric. Pick a point
$x_{0}\in X$ and $x_{1}\in\mathbb{S}^{n+1}(R)$. The manifold $X_{R}$ consists
of $X\backslash B_{X}(x_{0},1)$ together with a cylindrical neck
$N=\mathbb{S}^{n}(1)\times\left[  0,1\right]  $ that connects $X\backslash
B(x_{0},1)$ to $\mathbb{S}^{n+1}(R)\backslash B_{\mathbb{S}^{n+1}(R)}%
(x_{1},1)$ (we make the natural identification between $\mathbb{S}^{n}(1)$ and
$\partial B(x_{0},1)\subset X$ on the one hand, and $\mathbb{S}^{n}(1)$ and
$\partial B(x_{1,}1)$ $\subset\mathbb{S}^{n+1}(R)$ on the other). Thus%
\[
X_{R}=\left(  X\backslash B_{X}(x_{0},1)\right)  \sqcup N\sqcup(\mathbb{S}%
^{n+1}(R)\backslash B_{\mathbb{S}^{n+1}(R)}(x_{1},1).
\]
We put a smooth metric $g_{R}$ on $X_{R}$ which coincides with the standard
metric on the sphere on $\mathbb{S}^{n+1}(R)\backslash B_{\mathbb{S}^{n+1}%
(R)}(x_{1},2)$, and the original metric $g_{0}$ on $X\backslash B(x_{0},3)$.
There is a natural diffeomorphism $f:X\rightarrow X_{R}$ and we take
$g_{1}=f^{\ast}g_{R}$.

\begin{figure}[ptb]
\psfrag{x0}{$x_0$} \psfrag{x1}{$x_1$} \psfrag{SR}{$\mathbb{S}^{n+1}(R)$}
\psfrag{X}{$X$} \psfrag{N}{$N$} \psfrag{BX}{$B_X(x_0, 1)$}
\psfrag{BS}{$B_{\mathbb{S}^{n+1}(R)}(x_1, 1)$} \psfrag{2}{$2$} \psfrag{3}{$3$}
\par
\begin{center}
\includegraphics{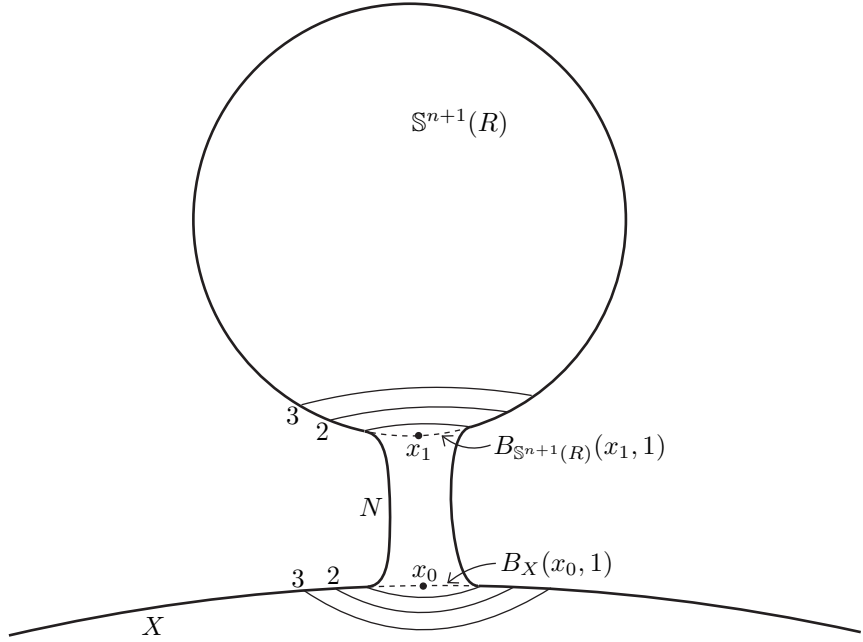}
\end{center}
\caption{$X_{R}$ is constructed by gluing a sphere of radius $R$ to
$X\setminus B_{X}(x_{0},1)$.}%
\label{Xsphere}%
\end{figure}

With this choice of perturbation, we wish to show that $\Phi(\lambda)$ has
essentially the same behavior as the wave trace on the sphere. We now make the
choice $T=2\pi R$ to localize near the periods of geodesics on the sphere. Let
$U_{S}(t)$ denote the wave group on $\mathbb{S}^{n+1}(R)$, and define%
\begin{equation}
\Phi_{0}(\lambda)=\int\varphi_{\varepsilon,2\pi R}(t)\operatorname*{Tr}\left[
U_{S}(t)\right]  ~dt \label{eq:PhiLambda0}%
\end{equation}
Recall (see for example \cite{DG:1975}, section 3):

\begin{lemma}
\label{lemma:wt.sphere}There is a strictly positive constant $c_{n}$ depending
only on $n$ so that%
\[
\Phi_{0}(\lambda)=c_{n}R^{n}\lambda^{n}+\mathcal{O}(\lambda^{n-1}).
\]

\end{lemma}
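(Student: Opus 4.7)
The plan is to read off $\operatorname{Tr} U_S(t)$ from the explicit spectrum of $\mathbb{S}^{n+1}(R)$ and then invoke the standard Duistermaat--Guillemin analysis of the wave trace on a Zoll manifold. Writing $A := \sqrt{\Delta_S - n^2/4}$ and matrix-exponentiating the generator of $U_S$, one finds $\operatorname{Tr} U_S(t) = 2\operatorname{Tr}\cos(tA) = 2\sum_k m_k \cos(t\mu_k)$ as a distribution on $\mathbb{R}\setminus\{0\}$, where $\mu_k := \sqrt{k(k+n)/R^2 - n^2/4}$ and $m_k$ is the dimension of the space of degree-$k$ spherical harmonics on $\mathbb{S}^{n+1}(R)$ (so that $m_k$ is a polynomial in $k$ of degree $n$ with positive leading coefficient). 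The finitely many indices $k$ for which $\mu_k^2<0$ contribute $\cosh$-type real-analytic functions of $t$, and therefore vanish to all orders in any Fourier asymptotic away from $t=0$.

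For large $k$, Taylor expansion gives $\mu_k = k/R + n/(2R) + O(k^{-1})$, so the spectrum of $A$ clusters at the arithmetic progression $(k+n/2)/R$ characteristic of a Zoll flow whose common geodesic period is $2\pi R$. It then follows from the Duistermaat--Guillemin trace formula \cite{DG:1975}, or equivalently from Poisson summation applied to the cluster structure above, that the singular support of $\operatorname{Tr} U_S(t)$ is contained in $2\pi R\,\mathbb{Z}$. Choosing $\varepsilon$ small enough that $\operatorname{supp}\varphi_{\varepsilon,2\pi R}$ meets this set only at the single point $t=2\pi R$, all other singularities contribute $O(\lambda^{-\infty})$ to $\Phi_0(\lambda)$, and it suffices to analyze the Fourier transform of the singular piece at $t=2\pi R$.

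The final step is to extract the leading term from that singularity. Either the clean intersection version of the Duistermaat--Guillemin formula, or a direct Hadamard parametrix / stationary-phase analysis of the wave kernel on $\mathbb{S}^{n+1}(R)$, yields an expansion whose leading term is determined by the Weyl asymptotic $m_k \sim c'_n k^n$ combined with the scaling $\mu_k \sim k/R$. Changing spectral variable from $k$ to $R\mu_k$ produces the overall factor $R^n$, while the normalization $\varphi(0)=1/(2\pi)$ together with $\widehat{\varphi}\geq 0$ makes the leading constant $c_n$ manifestly strictly positive, exactly as in the analogous Euclidean construction of \cite{SZ:1993}. The remainder is of the standard next-order Duistermaat--Guillemin type, $O(\lambda^{n-1})$. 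The one step that demands genuine care (as opposed to pure citation) is tracking the $R$-dependence through the change of variables and confirming the positivity of $c_n$; everything else is a direct application of \cite{DG:1975}.
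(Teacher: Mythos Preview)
Your proposal is correct and follows essentially the same approach as the paper: both reduce the computation to the Duistermaat--Guillemin description of the wave-trace singularity on $\mathbb{S}^{n+1}(R)$ at the Zoll period $t=2\pi R$, and the paper's proof is in fact just the one-line statement that the leading singularity of $\operatorname{Tr} U_S(t)$ there is $c_n R^n\,\delta^{(n)}(t-2\pi R)$. Your version is more detailed (the explicit spectral clustering, the handling of the finitely many negative $\mu_k^2$, and the tracking of the $R^n$ factor and the sign of $c_n$), but the underlying argument is the same citation of \cite{DG:1975}.
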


\begin{proof}
This follows from the fact that the leading singularity of $U_{S}(t)$ at
$t=2\pi R$ is $c_{n}R^{n}\delta^{(n)}(t-2\pi R)$
\end{proof}

We would like to show that $\Phi(\lambda)$ behaves like $\Phi_{0}(\lambda)$ up
to terms of order $R\lambda^{n}$ or lower. Microlocally, $U(t)$ and $U_{S}(t)$
behave similarly except on geodesics that enter the neck region that connects
the sphere to the rest of $X$. To isolate these errors we first define
pseudodifferential operators on the sphere that microlocalize along such
geodesics, and then move them to $(X,g_{1})$. This will allow us to estimate
$\Phi_{1}(\lambda)-\Phi_{0}(\lambda)$.

Let $\widetilde{B}\in\Psi_{\mathrm{phg}}^{0}(\mathbb{S}^{n+1}(R))$ be chosen
so that%
\[%
\operatorname{S-ES}%
(\widetilde{B})\subset\left\{  (x,\xi)\in S^{\ast}\mathbb{S}^{n+1}%
(R):\Pi\gamma_{t}(x,\xi)\in B_{\mathbb{S}^{n+1}(R)}(x_{1},3)~\exists
t\in\mathbb{R}\right\}  ,
\]
and if $\widetilde{A}=I-\widetilde{B}$,%
\[%
\operatorname{S-ES}%
\left(  \widetilde{A}\right)  \subset\left\{  (x,\xi)\in S^{\ast}%
\mathbb{S}^{n+1}(R):\Pi\gamma_{t}(x,\xi)\notin B_{\mathbb{S}^{n+1}(R)}%
(x_{1},11/4)~\forall t\in\mathbb{R}\right\}  .
\]
Note that, here, $\gamma_{t}(x,\xi)$ is a geodesic on the sphere. By adding
smoothing operators if needed, we further require that:

\begin{itemize}
\item $\widetilde{A}f=0$ for all $f\in L^{2}(\mathbb{S}^{n+1}(R))$ with
support in $B_{\mathbb{S}^{n+1}(R)}(x_{1},5/2)$, and

\item $\operatorname*{supp}(\widetilde{A}g)$ is contained $\mathbb{S}%
^{n+1}(R)\backslash B_{\mathbb{S}^{n+1}(R)}(x_{1},5/2)$ for all $g\in
L^{2}(\mathbb{S}^{n+1}(R))$.
\end{itemize}

Next, we define pseudodifferential operators on $X_{R}$ as follows. Let
$\psi_{1}\in\mathcal{C}^{\infty}(\mathbb{S}^{n+1})$ with
\begin{equation}
\psi_{1}(x)=\left\{
\begin{array}
[c]{ccc}%
1 &  & \operatorname*{dist}(x,x_{1})>5/2,\\
&  & \\
0 &  & \operatorname*{dist}(x,x_{1})<9/4,
\end{array}
\right.  \label{eq:cutoff-psi1}%
\end{equation}
and extend by zero to a smooth, compactly supported function on $X_{R}$ which
we continue to denote by $\psi_{1}$. We then define%
\begin{align*}
A  &  =\widetilde{A}\psi_{1},\\
B  &  =I-A.
\end{align*}
Thus $A$ microlocalizes in $S^{\ast}X_{R}$ to trajectories that enter the
gluing region at some time, and $B$ microlocalizes to those that do not.

We now write%
\begin{align*}
\operatorname*{Tr}\left(  U(t)\psi\right)   &  =\operatorname*{Tr}(U_{S}(t))\\
&  +\left[  \operatorname*{Tr}\left(  U(t)A\psi\right)  -\operatorname*{Tr}%
\left(  U_{S}(t)\widetilde{A}\right)  \right] \\
&  -\operatorname*{Tr}\left(  U_{S}(t)\widetilde{B}\right) \\
&  +\operatorname*{Tr}(U(t)B\psi)\\
&  =T_{0}(t)+T_{1}(t)+T_{2}(t)+T_{3}(t)
\end{align*}
and we will set%
\[
\Phi_{i}(\lambda)=\int\varphi_{\varepsilon,2\pi R}(t)\left[  T_{i}(t)\right]
~dt
\]
for $i=0,1,2,3$. \ Note that traces involving $U(t)$ are taken in
$\mathcal{H}(X_{R})$ while those involving $U_{S}(t)$ are taken in
$\mathcal{H}(\mathbb{S}^{n+1}(R))$.

To see that $\Phi_{2}(\lambda)$ and $\Phi_{3}(\lambda)$ give $\mathcal{O}%
(R\lambda^{n})$ contributions we need a phase space estimate.

\begin{lemma}
\label{lemma:phasespace1}The estimate
\begin{equation}
\operatorname*{vol}\nolimits_{S^{\ast}\mathbb{S}^{n+1}(R)}\left(
\operatorname{S-ES}%
(\widetilde{B})\right)  =\mathcal{O}(R) \label{eq:phasevol1}%
\end{equation}
holds.
\end{lemma}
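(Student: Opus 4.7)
The plan is to exploit the fact that the geodesic flow $\phi_{t}$ on $\mathbb{S}^{n+1}(R)$ is periodic with period $2\pi R$ and preserves the Liouville measure $\mu$ on $S^{\ast}\mathbb{S}^{n+1}(R)$. The essential support of $\widetilde{B}$ sits inside the flow-invariant tube
\[
S_{R}:=\{(x,\xi):\Pi\gamma_{t}(x,\xi)\in B_{\mathbb{S}^{n+1}(R)}(x_{1},3)\text{ for some }t\in\mathbb{R}\},
\]
so it suffices to show $\operatorname{vol}(S_{R})=\mathcal{O}(R)$.

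First I would pass to a slightly enlarged ball. Set $B_{2}:=B_{\mathbb{S}^{n+1}(R)}(x_{1},4)$ and $S':=\{(x,\xi)\in S^{\ast}\mathbb{S}^{n+1}(R):x\in B_{2}\}$. Since $B_{2}$ has radius fixed independent of $R$, its volume converges as $R\to\infty$ to that of a Euclidean $4$-ball, so $\operatorname{vol}(S')\leq C_{n}$ uniformly for large $R$. Flow invariance of $\mu$ together with Fubini then gives
\[
2\pi R\cdot\operatorname{vol}(S')=\int_{0}^{2\pi R}\operatorname{vol}(\phi_{-t}S')\,dt=\int_{S^{\ast}\mathbb{S}^{n+1}(R)}\tau(x,\xi)\,d\mu(x,\xi),
\]
where $\tau(x,\xi):=\int_{0}^{2\pi R}\mathbf{1}_{S'}(\phi_{t}(x,\xi))\,dt$ is the time spent by the closed geodesic through $(x,\xi)$ inside $B_{2}$ during one period.

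The key step is a uniform lower bound $\tau(x,\xi)\geq 2$ on $S_{R}$, which I would prove by a direct triangle-inequality argument: if $p$ is a point of closest approach of the geodesic to $x_{1}$, then $d(p,x_{1})\leq 3$, and unit-speed propagation gives $d(\gamma(s),x_{1})\leq d(p,x_{1})+|s|\leq 4$ for all $|s|\leq 1$, so the geodesic sits inside $\overline{B_{2}}$ on an interval of length $2$. Combining,
\[
2\cdot\operatorname{vol}(\operatorname{S-ES}(\widetilde{B}))\leq\int_{S_{R}}\tau\,d\mu\leq 2\pi R\cdot C_{n},
\]
which yields the desired bound. The conceptually subtle point, which I expect to be the main obstacle if one tries a naive flow-out bound, is that working with the sharp ball $B_{1}$ fails because near-tangent geodesics can have arbitrarily small occupation time in $B_{1}$. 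Passing to the slightly enlarged ball $B_{2}$ converts this would-be boundary degeneracy into a uniform positive lower bound, after which everything reduces to a one-line Liouville-measure manipulation.
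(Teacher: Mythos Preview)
Your argument is correct and follows essentially the same route as the paper: both exploit that the geodesic flow on $\mathbb{S}^{n+1}(R)$ is periodic of period $2\pi R$, preserves Liouville measure, and that the phase-space volume over the cap $B_{\mathbb{S}^{n+1}(R)}(x_{1},3)$ is $O(1)$ independently of $R$. The paper states the conclusion in two lines without the occupation-time formalism; your passage to the enlarged ball $B_{2}$ to secure the uniform lower bound $\tau\geq 2$ makes explicit a detail the paper leaves tacit.
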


\begin{proof}
Suppose that $\gamma_{t}(x,\xi)$ enters the cap $B_{\mathbb{S}^{n+1}(R)}%
(x_{1},3)$ at some time $t\in\mathbb{R}$. Since the geodesic flow has unit
speed and the closed geodesics have length $2\pi R$, it will enter first at a
time $t\in\lbrack0,2\pi R]$. The volume of the cap $B_{\mathbb{S}^{n+1}%
(R)}(x_{1},3)$ is of order one. Since phase space volume is preserved by
geodesic flow, the phase space volume of points entering the cap, and hence of
$%
\operatorname{S-ES}%
(\widetilde{B})$, is of order $\mathcal{O}(R)$.
\end{proof}

\begin{remark}
\label{rem:phasespace1}The same estimate holds true for $\operatorname*{vol}%
\nolimits_{S^{\ast}X_{R}}\left(
\operatorname{S-ES}%
(B\psi)\right)  $ by construction.
\end{remark}

Combining Lemma \ref{lemma:phasespace1}, Remark \ref{rem:phasespace1}, and
Lemma \ref{lemma.2}, we immediately obtain:

\begin{lemma}
\label{lemma:wt3}The estimate%
\[
\Phi_{2}(\lambda)+\Phi_{3}(\lambda)=\mathcal{O}(R\lambda^{n})
\]
holds.
\end{lemma}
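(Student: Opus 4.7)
The plan is to read off the bound as a direct consequence of the three ingredients cited just before the lemma, namely the phase–space volume estimate of Lemma~\ref{lemma:phasespace1}, its $X_R$ analogue in Remark~\ref{rem:phasespace1}, and the general wave–trace bound supplied by Lemma~\ref{lemma.2}. The general principle is that if $B$ is a zeroth–order pseudodifferential operator whose essential support has bounded phase–space volume $V$, then
\[
\int e^{-i\lambda t}\varphi_{\varepsilon,T}(t)\operatorname{Tr}[U(t)B]\,dt = \mathcal{O}\bigl(V\lambda^{n}\bigr),
\]
where the implicit constant depends only on $\varphi$ and $\varepsilon$ but not on $T$ or on the microlocalizer $B$ beyond its essential support. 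This is exactly what Lemma~\ref{lemma.2} furnishes.

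For $\Phi_{2}(\lambda)$ I would apply Lemma~\ref{lemma.2} on $\mathbb{S}^{n+1}(R)$ with the wave group $U_{S}(t)$ and the pseudodifferential operator $\widetilde{B}\in\Psi_{\mathrm{phg}}^{0}(\mathbb{S}^{n+1}(R))$. By Lemma~\ref{lemma:phasespace1}, $\operatorname{vol}_{S^{\ast}\mathbb{S}^{n+1}(R)}(\operatorname{S-ES}(\widetilde{B}))=\mathcal{O}(R)$, so the cited wave–trace estimate yields $\Phi_{2}(\lambda)=\mathcal{O}(R\lambda^{n})$. For $\Phi_{3}(\lambda)$ I would apply the same lemma on $(X_{R},g_{R})$ with the wave group $U(t)$ and the operator $B\psi$; the support properties of $\psi$ together with the construction of $B=I-A=I-\widetilde{A}\psi_{1}$ ensure that $B\psi\in\Psi_{\mathrm{phg}}^{0}(X_{R})$ has compactly supported, well–defined essential support, and Remark~\ref{rem:phasespace1} gives $\operatorname{vol}_{S^{\ast}X_{R}}(\operatorname{S-ES}(B\psi))=\mathcal{O}(R)$. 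Hence $\Phi_{3}(\lambda)=\mathcal{O}(R\lambda^{n})$ as well, and adding the two contributions completes the proof.

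The only point that requires a moment of care, and which I would flag as the mild obstacle, is verifying that Lemma~\ref{lemma.2} is applicable in the perturbed geometry $(X_{R},g_{R})$ for $\Phi_{3}$: the wave group $U(t)$ on $(X_{R},g_{R})$ is not a model operator, and one must check that the dependence on $T=2\pi R$ in the wave–trace estimate enters only through the phase–space volume of the essential support of $B\psi$ and not through any additional $T$–dependent constants coming from the propagation of singularities for $U(t)$. Because $B\psi$ is supported away from the support of the metric perturbation cutoff (the perturbation lives in $B(x_{0},3)$ while $B\psi$ microlocalizes to trajectories avoiding the gluing neck), Egorov's theorem applied to $U(t)B\psi U(-t)$ keeps the essential support inside a set of volume $\mathcal{O}(R)$ for all relevant times, so the hypotheses of Lemma~\ref{lemma.2} are met uniformly and the claimed bound follows.
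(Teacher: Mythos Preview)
Your proposal is correct and follows exactly the paper's approach: the paper states the lemma as an immediate consequence of Lemma~\ref{lemma:phasespace1}, Remark~\ref{rem:phasespace1}, and Lemma~\ref{lemma.2}, which is precisely what your first two paragraphs do.

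One small correction to your third paragraph, which is in any case unnecessary. You have the microlocalization backwards: $A=\widetilde{A}\psi_1$ microlocalizes to sphere trajectories that \emph{avoid} the neck, so $B=I-A$ and hence $B\psi$ microlocalizes to trajectories that \emph{do} enter the gluing region. This mischaracterization is harmless because your worry about hidden $T$-dependence is moot: in the proof of Lemma~\ref{lemma.2} the substitution $t=T+s$ together with the unitarity of $e^{iTQ}$ removes all $T$-dependence before any microlocal analysis begins, leaving error terms depending only on $\chi$, $B$, and $C$ for $s$ near $0$. So the third paragraph can simply be dropped.
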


Finally, we prove:

\begin{lemma}
\label{lemma:wt4}The estimate $\Phi_{1}(\lambda)=\mathcal{O}(\lambda^{-\infty
})$ holds.
\end{lemma}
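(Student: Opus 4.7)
The plan is to show that $T_1(t) = \operatorname{Tr}_{X_R}[U(t) A\psi] - \operatorname{Tr}_{\mathbb{S}^{n+1}(R)}[U_S(t)\widetilde{A}]$ is a smooth function of $t$ in a neighborhood of $2\pi R$. Once this is established, $\varphi_{\varepsilon,2\pi R}(t)\,T_1(t)$ lies in $C_0^\infty(\mathbb{R})$ and its Fourier transform $\Phi_1(\lambda)$ decays faster than any power of $\lambda$.

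The first step is a straightforward support simplification. By construction, $\psi \equiv 1$ on the support of $\psi_1$: indeed $\psi_1$ is supported in the sphere portion of $X_R$, which under the diffeomorphism $f$ corresponds to a small neighborhood of $x_0$ in $X$ lying well inside $\{\psi = 1\} = B_{g_0}(x_0, 4)$. Hence $\psi_1\psi = \psi_1$ and $A\psi = \widetilde{A}\psi_1 = A$. Moreover, the support conditions imposed on $\widetilde{A}$ ensure that its Schwartz kernel is supported in $[\mathbb{S}^{n+1}(R) \setminus B(x_1, 5/2)]^2$, which lies entirely within the region where $X_R$ is isometric to a subset of the sphere. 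Via this natural identification, $A$ on $X_R$ and $\widetilde{A}$ on $\mathbb{S}^{n+1}(R)$ have the same Schwartz kernel.

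The main step is a microlocal comparison of $U(t)A$ on $X_R$ with $U_S(t)\widetilde{A}$ on $\mathbb{S}^{n+1}(R)$. The essential support of $\widetilde{A}$ consists of points $(y,\eta) \in S^*\mathbb{S}^{n+1}(R)$ whose sphere geodesics never enter $B_{\mathbb{S}^{n+1}(R)}(x_1, 11/4)$. Since $g_1$ and $g_R$ agree on the common region and $g_R$ is the sphere metric there, the sphere geodesic flow and the $X_R$ geodesic flow coincide on this set, which is invariant under either flow. By propagation of singularities for the wave equation, the wavefront sets of $U(t) A f$ and of $U_S(t) \widetilde{A} f$ stay in this common invariant set for all $t \in \mathbb{R}$, and in particular remain in the common region of the two manifolds, bounded away from the neck of $X_R$ and from $B(x_1, 11/4)$ on the sphere. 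Because the Hadamard parametrix for the wave equation is determined locally by the metric in a tubular neighborhood of the relevant geodesic tubes, and these tubes together with the metric along them coincide on the two manifolds, the Schwartz kernels of $U(t)A$ and of $U_S(t)\widetilde{A}$ agree on the common region modulo a jointly smooth remainder in $(t,x,y)$, uniformly for $t$ near $2\pi R$.

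Integrating along the diagonal, which lies entirely in the common region by the support property of $A$, shows that $T_1(t)$ equals the trace of a smoothing family and is hence smooth on a neighborhood of $2\pi R$, yielding the lemma. The principal technical obstacle is the uniform-in-$t$ comparison of the two wave parametrices: while propagation of singularities immediately identifies the singular supports, showing agreement modulo smoothing in a sense that is uniform for $t$ in a full neighborhood of $2\pi R$ requires a careful construction of Hadamard parametrices along the relevant geodesic tubes and checking that the remainders are smoothing uniformly in $t$. This is essentially the microlocal content of the Duistermaat--Guillemin wave trace analysis, here applied to two manifolds whose metrics coincide on a neighborhood of every closed geodesic in $\widetilde{A}$'s essential support, so that the length spectrum, Poincar\'{e} maps, and subprincipal data of the trace contributions agree and cancel exactly in $T_1(t)$.
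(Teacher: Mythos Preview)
Your approach is correct in spirit and reaches the right conclusion, but it differs from the paper's argument in a way worth noting. You reduce to the claim that the Schwartz kernels of $U(t)A$ and $U_S(t)\widetilde{A}$ agree modulo a jointly smooth remainder, and you justify this by invoking Hadamard parametrices and the Duistermaat--Guillemin analysis, while conceding that the uniform-in-$t$ comparison is the ``principal technical obstacle.'' The paper bypasses this obstacle entirely with a more elementary Duhamel argument: after the same simplification $A\psi=A$, it writes $T_1(t)=\operatorname{Tr} G_3(t)$ with $G_3(t)=\psi_1 U(t)A-\psi_1 U_S(t)A$, observes $G_3(0)=0$ and $(\partial_t-Q)G_3(t)=F_3(t)$ where $F_3(t)=[\psi_1,Q]U(t)A-[\psi_1,Q]U_S(t)A$, and then shows $F_3(t)$ is smoothing because the commutator $[\psi_1,Q]$ is supported in the annulus $\{9/4<\operatorname{dist}(x,x_1)<5/2\}$, which is disjoint from the propagated essential support of $A$ by the $11/4$ condition. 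The integral representation $G_3(t)=\int_0^t U(t-s)F_3(s)\,ds$ then gives smoothness jointly in $t$ without any parametrix construction. Your route is valid but imports more machinery than needed; the paper's Duhamel trick is what actually dispatches the technical obstacle you flag.
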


\begin{proof}
First, by the definitions (\ref{eq:cutoff-psi}) and (\ref{eq:cutoff-psi1}) of
$\psi_{1}$ and $\psi$, it follows that $U(t)A\psi=U(t)A$. Next, note that

\begin{itemize}
\item[(i)] if $\widetilde{f}\in L^{2}(X_{R})$ and $\operatorname*{supp}%
\widetilde{f}\subset X_{R}\backslash\left(  \mathbb{S}^{n+1}\backslash
B_{\mathbb{S}^{n+1}(R)}(x_{1},5/2)\right)  $, we have $\widetilde{A}%
\widetilde{f}=0$, and,

\item[(ii)] if $f\in L^{2}(\mathbb{S}^{n+1})$ and $\operatorname*{supp}%
f\subset\mathbb{S}^{n+1}(R)\backslash B_{\mathbb{S}^{n+1}(R)}(x_{1},5/2)$, $f$
has a natural identification with $\tilde{f}\in L^{2}(X_{R})$ and%
\[
Af=\widetilde{A}\widetilde{f}.
\]

\end{itemize}

It follows that $\operatorname*{Tr}(U_{S}(t)\widetilde{A})=\operatorname*{Tr}%
(\psi_{1}U_{S}(t)\widetilde{A})$ and similarly $\operatorname*{Tr}%
(U(t)A)=\operatorname*{Tr}(\psi_{1}U(t)A)$. Moreover,
\[
\operatorname{Tr}_{\mathcal{H}(\mathbb{S}^{n+1}(R))}(\psi U_{S}(t)\widetilde
{A})=\operatorname{Tr}_{\mathcal{H}(\mathbb{S}^{n+1}(R))}(\psi U_{S}(t)A)
\]
if we regard $U_{S}(t)$ as acting on the image of $L^{2}(X_{R})$ under $A$.
Hence $T_{1}(t)=\operatorname*{Tr}G_{3}(t)$ where%
\[
G_{3}(t)=\psi_{1}U(t)A-\psi_{1}U_{S}(t)A.
\]
It suffices to show that $G_{3}(t)$ is a smoothing operator for all $t$. We
have $G_{3}(0)=0$, while%
\[
\left(  \partial_{t}-Q\right)  G_{3}(t)=F_{3}(t)
\]
(recall $Q$ is the generator of $U(t)$) where%
\begin{equation}
F_{3}(t)=\left[  \psi_{1},Q\right]  U(t)A-\left[  \psi_{1},Q\right]  U_{S}(t)A
\label{eq:F3}%
\end{equation}
since the generators of $U(t)$ and $U_{S}(t)$ coincide in the support of
$\psi_{1}$. Since, then%
\begin{equation}
G_{3}(t)=\int_{0}^{t}U(t-s)F_{3}(s)~ds, \label{eq:G3-Duhamel}%
\end{equation}
it is enough to show that the two right-hand terms in (\ref{eq:F3}) are
smoothing operators. By propagation of singularities, the operators $\eta
U(t)A$ and $\eta U_{S}(t)A$ are smoothing for any $\eta\in\mathcal{C}%
_{0}^{\infty}(X_{R})$ vanishing for $x$ with $\operatorname*{dist}%
(x,x_{1})\geq11/4$. Since the commutators $\left[  Q,\psi_{1}\right]  $ and
$\left[  Q_{S},\psi_{1}\right]  $ are supported in $\left\{
x:9/4<\operatorname*{dist}(x,x_{1})<5/2\right\}  $, it follows that $F_{3}(t)$
is smoothing for each $t$, and hence, by (\ref{eq:G3-Duhamel}), $G_{3}(t)$ is
a smoothing operator.
\end{proof}

Collecting Lemmas \ref{lemma:wt3}, \ref{lemma:wt4}, and \ref{lemma:wt.sphere},
we conclude:

\begin{proposition}
The asymptotic formula%
\begin{equation}
\Phi(\lambda)=c_{n}R^{n}\lambda^{n}+\mathcal{O}_{\varepsilon,\psi}%
(R\lambda^{n}) \label{eq:PhiLambda.asy}%
\end{equation}
holds.
\end{proposition}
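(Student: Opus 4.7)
The plan is simply to assemble the lemmas already established in this section, chasing through the two-stage decomposition of $\Phi(\lambda)$. First, combining Lemmas \ref{lemma:wt1} and \ref{lemma:wt2} and using $T = 2\pi R$ in the definition (\ref{eq:PhiLambda}) gives the reduction
\[
\Phi(\lambda) = \Phi_{1}(\lambda) + \mathcal{O}_{\varepsilon,\psi}(R\lambda^{n}),
\]
where $\Phi_{1}(\lambda) = \int e^{-i\lambda t}\varphi_{\varepsilon, 2\pi R}(t)\operatorname{Tr}[U(t)\psi]\,dt$ (this is equation (\ref{eq:wt-loc3}) with $T = 2\pi R$). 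So it suffices to show $\Phi_{1}(\lambda) = c_{n} R^{n}\lambda^{n} + \mathcal{O}(R\lambda^{n})$.

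Next, by the additive decomposition $\operatorname{Tr}(U(t)\psi) = T_{0}(t) + T_{1}(t) + T_{2}(t) + T_{3}(t)$ introduced above, we have
\[
\Phi_{1}(\lambda) = \Phi_{0}(\lambda) + \Phi_{1}^{\mathrm{new}}(\lambda) + \Phi_{2}(\lambda) + \Phi_{3}(\lambda),
\]
where (with the second meaning of $\Phi_{i}$) $\Phi_{i}(\lambda) = \int \varphi_{\varepsilon, 2\pi R}(t)\, T_{i}(t)\,dt$. Apply Lemma \ref{lemma:wt.sphere} to the spherical piece $\Phi_{0}$, which extracts the main term $c_{n}R^{n}\lambda^{n}$ with a remainder of size $\mathcal{O}(\lambda^{n-1})$; apply Lemma \ref{lemma:wt4} to control $\Phi_{1}^{\mathrm{new}}$ by $\mathcal{O}(\lambda^{-\infty})$; and apply Lemma \ref{lemma:wt3} to bound $\Phi_{2} + \Phi_{3}$ by $\mathcal{O}(R\lambda^{n})$.

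Collecting these contributions,
\[
\Phi(\lambda) = c_{n}R^{n}\lambda^{n} + \mathcal{O}(\lambda^{n-1}) + \mathcal{O}(\lambda^{-\infty}) + \mathcal{O}(R\lambda^{n}) + \mathcal{O}_{\varepsilon,\psi}(R\lambda^{n}),
\]
and since both $\lambda^{n-1}$ and $\lambda^{-\infty}$ are absorbed into $\mathcal{O}_{\varepsilon,\psi}(R\lambda^{n})$ as $\lambda \to \infty$, the claimed formula (\ref{eq:PhiLambda.asy}) follows. There is no real obstacle at this stage since all the hard work has already been done in the preceding lemmas; the only thing to verify is that the bookkeeping on the two uses of the symbol $\Phi_{1}$ is consistent, and that the choice $T = 2\pi R$ does indeed make the $T\lambda^{n}$ error from Lemma \ref{lemma:wt1} compatible with the $R\lambda^{n}$ error asserted in the statement.
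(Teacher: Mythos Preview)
Your proposal is correct and follows exactly the paper's approach: the paper simply states ``Collecting Lemmas \ref{lemma:wt3}, \ref{lemma:wt4}, and \ref{lemma:wt.sphere}, we conclude,'' and you have spelled out precisely that collection, together with the earlier reduction (\ref{eq:wt-loc3}) at $T=2\pi R$. Your handling of the notational collision between the two meanings of $\Phi_{1}$ is also accurate.
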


\begin{proof}
[Proof of Theorem \ref{thm:tumor}]Let $\mathcal{R}_{1}$ be the set of
resolvent resonances for the metric $g_{1}$, and let $u_{1}(t)$ be the
distribution defined in (\ref{eq:u1}). The bound (\ref{eq:0small}) for the
distribution $u_{0}$ and the asymptotic formula (\ref{eq:PhiLambda.asy}) imply
that for $R$ sufficiently large and some strictly positive constant $b$,%
\[
\left\vert \widehat{\varphi_{\varepsilon,2\pi R}~u_{1}}(\lambda)\right\vert
\geq\left(  b-o(1)\right)  \lambda^{n}%
\]
as $\lambda\rightarrow+\infty$. We now apply Theorem \ref{thm:tauber} to
obtain the conclusion.
\end{proof}


\section{Generic lower bounds}

\label{sec:generic1}

We fix a compact region $K\subset X$ and we assume that the metric on
$X\backslash K^{\prime}$ is hyperbolic for some compact region $K^{\prime
}\subset X$ containing $K$. Our goal is to prove that there is a dense
$G_{\delta}$ set $\mathcal{M}(g_{0},K)\subset\mathcal{G}(g_{0},K)$
of metric perturbations for which $N_{g}(r)$, the resolvent resonance counting
function for the perturbed metric has maximal order of growth $n+1$. By the
explicit construction in section \ref{sec:tumor}, the set $\mathcal{M}%
(g_{0},K)$ is nonempty.
We follow the ideas of \cite{Christiansen:2007} and present the main lines of
the argument here. We refer to \cite{Christiansen:2005} and
\cite{Christiansen:2007} for the proofs of statements below that hold with
only minor modification in the present context.


\subsection{Nevanlinna characteristic functions}

\label{subsec:characteristic1}

We recall briefly the main ideas of \cite{Christiansen:2007}. Let $f$ be a
function meromorphic of $\mathbb{C}$. For $r \geq0$, let $n(r,f)$ be the
number of poles of $f$, including multiplicity, in the region $\{
s\in\mathbb{C}:\; |s-n/2|\leq r\}$. We define an integrated counting function
\begin{equation}
\label{eq:counting1}N(r,f) \equiv\int_{0}^{r} [ n(t,f) - n(0, f)] \frac{dt}{t}
+ n(0,f) \log r .
\end{equation}
We also need an average of $\log^{+} |f|$ along the contour $|s-n/2| = r$:
\begin{equation}
\label{eq:semicircle1}m(r,f) \equiv\frac{1}{2 \pi} \int_{0}^{2 \pi} \log^{+}
|f(n/2+ r e^{i \theta})| ~d \theta,
\end{equation}
where $\log^{+} (a) = \mbox{max} ~(0 , \log a)$, for $a > 0$. The
\textit{Nevanlinna characteristic function}\footnote{Strictly speaking, this
is the Nevanlinna characteristic function of $f(s+n/2)$, rather than that for
$f$. We have chosen to make this minor adaptation here to suit the importance
of $s=n/2$ in our parameterization of the spectrum.} of $f$ is defined by
\begin{equation}
\label{eq:nevanlinna1}T(r,f ) \equiv N(r,f) + m(r,f) .
\end{equation}
This is a nondecreasing function of $r$. The \textit{order} of a
nondecreasing, nonnegative function $h(r) > 0$ is given by
\begin{equation}
\label{eq:order1}\limsup_{r \rightarrow\infty} \frac{\log h(r)}{\log r} = \mu,
\end{equation}
provided it is finite. The order of a meromorphic function $f$ is the order of
its characteristic function $T(r,f)$.

The following proposition gives a connection between the order of the
characteristic function of $f$ and the order of the pole counting function
$n(r,f)$ for $f$ under certain conditions on the meromorphic function $f$.
We recall this result from \cite[Lemma 2.3]{Christiansen:2007} (see also
\cite[Lemma 4.2]{Christiansen:2005}) with minor changes to suit the convention
that the right half-plane $\Re(s) >n/2$ corresponds to the physical region.

\begin{proposition}
\label{prop:countingfunc1} Suppose that $f(s)$ is a meromorphic function on
$\mathbb{C}$ with the property that $s_{0}$ is a pole of $f$ if and only if
$n- s_{0}$ is a zero of $f$, and the multiplicities are the same. Furthermore,
suppose that no zeros of $f$ lie on the line $\Re(s)=n/2$ and that
\begin{equation}
\label{eq:line1}\int_{0}^{r} \frac{d}{dt} \log f(n/2+it) ~dt = \mathcal{O}
(r^{m}),
\end{equation}
for some $m > 1$. Then, $f$ is of order $p > m$ if and only if $n(r,f)$ is of
order $p$.
\end{proposition}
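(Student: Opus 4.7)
The plan is to reduce the equivalence to a single inequality of the form $T(r,f) = O(N(r,f) + r^m)$, which, combined with the obvious bound $T(r,f) \geq N(r,f)$, forces the order of $T(r,f)$ to equal $\max(m,\, \mathrm{ord}\, n(r,f))$; under the hypothesis that this quantity exceeds $m$, it equals $\mathrm{ord}\, n(r,f)$, delivering both implications at once. Note that the comparison $n(r,f) \leq (N(2r,f) - N(r,f))/\log 2$ ensures $n(r,f)$ and $N(r,f)$ always have the same order.

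The first reduction uses the pole--zero symmetry. Because $s_0$ is a pole of $f$ if and only if $n-s_0$ is a zero of matching multiplicity, one has $n(r,f) = n(r,1/f)$ when both are counted in discs centered at $n/2$, and hence $N(r,f) = N(r,1/f)$. Since no zeros of $f$ lie on the critical line $\Re s = n/2$, the value $f(n/2)$ is finite and nonzero, so Jensen's formula applied to $f$ on the disc $|s-n/2|<r$ yields
\[
\frac{1}{2\pi}\int_0^{2\pi} \log|f(n/2 + re^{i\theta})|\, d\theta = \log|f(n/2)| + N(r,1/f) - N(r,f) = \log|f(n/2)|.
\]
Using the identity $\log^+ a = \tfrac{1}{2}\bigl(|\log a| + \log a\bigr)$ then reduces the proximity function to
\[
m(r,f) = \frac{1}{4\pi}\int_0^{2\pi} \bigl|\log|f(n/2+re^{i\theta})|\bigr|\, d\theta + \tfrac{1}{2}\log|f(n/2)|.
\]

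The substantive step is to bound the remaining integral by $O(N(r,f) + r^m)$. The natural tool is a Carleman-type formula applied separately to the right and left semi-discs obtained by cutting $|s-n/2|<r$ along the critical line $\Re s = n/2$. For each semi-disc, Carleman's formula expresses the weighted arc integral of $\log|f|$ as a sum of three pieces: a finite sum over the zeros and poles of $f$ inside the semi-disc, bounded in modulus by a universal constant times $N(r,f)$; a uniformly bounded contribution coming from the behavior at $s=n/2$; and a line integral of $\log|f(n/2+iy)|$ over $y\in[-r,r]$ against an integrable Carleman kernel. The hypothesis \eqref{eq:line1} gives $\bigl|\log|f(n/2+iy)|\bigr| = O(|y|^m)$, and integration against the Carleman kernel yields an $O(r^m)$ contribution. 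Summing the two semi-disc estimates produces $\int_0^{2\pi} \bigl|\log|f(n/2+re^{i\theta})|\bigr|\, d\theta = O(N(r,f) + r^m)$, so $m(r,f) = O(N(r,f) + r^m)$ and therefore $T(r,f) = O(N(r,f) + r^m)$.

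The main obstacle is the careful execution of the Carleman formula: tracking branches of $\log f$ along the critical line (where $f$ is zero- and pole-free, so $\log|f|$ is locally harmonic) and uniformly controlling contributions from zeros or poles near the boundary of each semi-disc. These are precisely the issues addressed in \cite[Lemma~2.3]{Christiansen:2007} and \cite[Lemma~4.2]{Christiansen:2005}; the arguments carry over by merely shifting the center of the disc from $0$ to $n/2$.
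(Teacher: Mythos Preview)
The paper does not supply a proof of this proposition; it simply recalls the result from \cite[Lemma~2.3]{Christiansen:2007} and \cite[Lemma~4.2]{Christiansen:2005}, noting only the cosmetic shift of the center from $0$ to $n/2$. Your sketch---Jensen plus the $s\mapsto n-s$ symmetry to reduce $m(r,f)$ to an $L^1$ arc integral of $\log|f|$, then a half-disc Carleman/Froese-type identity to control that arc integral by $N(r,f)+O(r^m)$---is precisely the argument of those references, and you correctly point to them for the technical execution; one small imprecision is that the half-disc formula (as used, e.g., in \cite[Lemma~6.1]{froese} and in this paper's Lemma~\ref{l:ordercomp}) yields the \emph{unweighted} arc integral of $\log|f|$, not a weighted one, and the passage from $\int\log|f|$ on each semi-arc to a bound on $\int|\log|f||$ requires one more step than your summary indicates.
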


We next introduce the auxiliary parameter $z$ taking values in an open
connected set $\Omega\subset\mathbb{C}$. We consider functions $f(z,s)$ that
are meromorphic on $\Omega_{z}\times\mathbb{C}_{s}$. Considering $z\in\Omega$
as a parameter, we write $T(z,r,f)\equiv T(r,f(z,\cdot))$ for the Nevanlinna
characteristic function of $f(z,s)$.

For any $z_{0} \in\Omega$, let $\Omega_{0} \subset\Omega$ denote an open ball
centered at $z_{0}$. Given $z_{0} \in\Omega_{0}$, there are holomorphic,
relatively prime functions $g_{\Omega_{0}}$ and $h_{\Omega_{0}}$ defined on
$\Omega_{0} \times\mathbb{C}$, so that
\begin{equation}
\label{eq:fraction1}f(z, s) = \frac{ g_{\Omega_{0}} (z, s)}{ h_{\Omega_{0}} (z
, s)}, ~~\mbox{for} ~~(z , s) \in\Omega_{0} \times\mathbb{C}.
\end{equation}
We suppose that $h_{\Omega_{0}}(z , s )= (s-n/2)^{j} \tilde{h}_{\Omega_{0}}(z
, s)$ so that $\tilde{h}_{\Omega_{0}}$ is holomorphic on $\Omega_{0}
\times\mathbb{C}$ and $\tilde{h}_{0} (z , n/2)$ is not identically zero. We
define a set $K_{f, \Omega_{0}}$ relative to this decomposition by
\begin{equation}
\label{eq:fraction2}K_{f, \Omega_{0}} = \{ z_{1} \in\Omega_{0} ~|~ \tilde
{h}_{\Omega_{0}} ( z_{1} , n/2) = 0 ~\mbox{or} ~ h_{\Omega_{0}}(z_{1} , s)
~\mbox{vanishes identically}, s \in\mathbb{C} \}.
\end{equation}
The set $K_{f, \Omega_{0}}$ is independent of the decomposition described
above provided each pair $(g_{\Omega_{0}},h_{\Omega_{0}})$ satisfies the same
properties. We let $K_{f}$ be the union of all these sets over balls
$\Omega_{0}$ for each $z_{0} \in\Omega$. The intersection of $K_{f}$ with any
compact subset of $\Omega$ consists of a finite number of points.

The next result illustrates the utility of the additional parameter $z$. If
the order of the monotone nondecreasing function $r\mapsto T(z,r,f)$ is
bounded and the bound is obtained at some $z_{0}\in\Omega\backslash K_{f}$
then it is obtained at all points $z\in\Omega\backslash K_{F}$ except for a
pluripolar set. For the definition of pluripolar sets and additional facts
about them see, for example \cite{LG:1986} or \cite{Klimek:1991}. Pluripolar
sets are small. In particular, we shall use the fact that if $\Omega
\subset\mathbb{C}$ is open and $E\subset\Omega$ is pluripolar, then
$\Omega\cap\mathbb{R}$ has Lebesgue measure zero.

\begin{theorem}
\cite[Theorem 3.5]{Christiansen:2007}\label{th:psh1} Let $\Omega
\subset\mathbb{C}$ be an open connected set. Let $f(z,s)$ be meromorphic on
$\Omega_{z} \times\mathbb{C}_{s}$. Suppose that the order $\rho(z)$ of the
function $r \mapsto T(r,f(z, \cdot))$ is at most $\rho_{0}$ for $z \in
\Omega\backslash K_{f}$, and that there is a point $z_{0} \in\Omega\backslash
K_{f}$ such that $\rho(z_{0} ) = \rho_{0}$. Then, there exists a pluripolar
set $E \subset\Omega\backslash K_{f}$ such that $\rho(z) = \rho_{0}$ for all
$z \in\Omega\backslash( E \cup K_{f})$.
\end{theorem}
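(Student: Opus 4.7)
The plan is to realize $\rho(z)$ as the $\limsup$ of a family of plurisubharmonic functions of $z$ and then invoke the classical theorem of Lelong that such a $\limsup$ differs from its upper semicontinuous regularization only on a pluripolar set.

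The first and main technical step is to check that for each fixed $r>0$ the map $z\mapsto T(z,r,f)$ is plurisubharmonic on $\Omega\setminus K_f$. Around any $z_0\in\Omega\setminus K_f$, I would use the relatively prime factorization $f(z,s)=g_{\Omega_0}(z,s)/h_{\Omega_0}(z,s)$ from \eqref{eq:fraction1} and apply Jensen's formula to recast the two pieces of $T$ (the pole counting function $N(r,f)$ and the boundary mean $m(r,f)$) as combinations of
$$\frac{1}{2\pi}\int_0^{2\pi}\log|g_{\Omega_0}(z,n/2+re^{i\theta})|\,d\theta - \log|g_{\Omega_0}(z,n/2)|$$
and the analogous expression with $h_{\Omega_0}$ in place of $g_{\Omega_0}$. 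Each integrand is plurisubharmonic in $z$, integration over $\theta$ preserves plurisubharmonicity, and the subtracted reference values at $s=n/2$ are finite precisely when $z\notin K_{f,\Omega_0}$; that is exactly what the definition \eqref{eq:fraction2} of $K_f$ is designed to rule out. A local-to-global patching argument, using that the local data are unique up to nonvanishing holomorphic factors off $K_f$, then gives a well-defined plurisubharmonic function $z\mapsto T(z,r,f)$ on $\Omega\setminus K_f$.

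Given this, I would set $u_r(z):=\log T(z,r,f)/\log r$ for $r\geq 2$. Each $u_r$ is plurisubharmonic on $\Omega\setminus K_f$, and by hypothesis the family $\{u_r\}$ is locally uniformly bounded above by $\rho_0$ there. Form $\rho(z)=\limsup_{r\to\infty}u_r(z)$ and its upper semicontinuous regularization $\rho^*(z):=\limsup_{w\to z}\rho(w)$. Lelong's theorem then ensures that $\rho^*$ is plurisubharmonic on $\Omega\setminus K_f$ and that $\{\rho<\rho^*\}$ is pluripolar. Since $\rho(z_0)=\rho_0$ and $\rho^*\leq \rho_0$, we have $\rho^*(z_0)=\rho_0$. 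Because $K_f$ is locally finite in the connected open set $\Omega\subset\mathbb{C}$, the complement $\Omega\setminus K_f$ is still connected, so the maximum principle for plurisubharmonic functions yields $\rho^*\equiv \rho_0$. Consequently $E:=\{z\in\Omega\setminus K_f:\rho(z)<\rho_0\}=\{\rho<\rho^*\}$ is pluripolar, as desired.

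The principal obstacle is the plurisubharmonicity claim in the first step: $T(r,f)$ mixes a boundary integral of $\log^+|f|$ with a logarithmic counting function for poles, both of which depend on $z$ in potentially singular ways. Jensen's formula is what reorganizes the combination into something manifestly psh, and the careful cutting out of $K_f$ is what ensures that the local Weierstrass-type representations of $f(z,s)$ can be chosen so that the normalization terms at $s=n/2$ remain finite and pluriharmonic.
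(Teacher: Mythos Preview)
The paper does not actually give a proof of this theorem; it is quoted verbatim from \cite[Theorem~3.5]{Christiansen:2007} and the reader is referred there (and to \cite{LG:1986}) for the argument. Your overall strategy --- show that $z\mapsto T(z,r,f)$ is plurisubharmonic off $K_f$ and then invoke a Lelong-type result on the exceptional set of a $\limsup$ --- is indeed the strategy used in \cite{Christiansen:2007}. The Jensen/Cartan rewriting of $T$ you describe is also correct: one obtains
\[
T(z,r,f)=\frac{1}{2\pi}\int_0^{2\pi}\max\bigl(\log|g_{\Omega_0}|,\log|h_{\Omega_0}|\bigr)(z,\tfrac{n}{2}+re^{i\theta})\,d\theta-\log|h_{\Omega_0}(z,\tfrac{n}{2})|,
\]
and away from $K_f$ the subtracted term is \emph{pluriharmonic} (not merely finite), so $T(z,r,\cdot)$ is genuinely psh there.

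There is, however, a real gap in the passage from $T$ to the order. You set $u_r(z)=\log T(z,r,f)/\log r$ and assert that each $u_r$ is plurisubharmonic. This does not follow: the logarithm of a psh function is not psh in general. Concretely, your own formula exhibits $T(z,r,f)$ as a psh function of $z$ plus a pluriharmonic correction; for such a function $T=A-v$ one has $\partial\bar\partial\log T=(T\,\partial\bar\partial A-|\partial A-\partial v|^2)/T^2$, which has no reason to be nonnegative (take $A$ nearly constant and $v$ with large gradient). A second, related issue is that Lelong's theorem requires the family $\{u_r\}$ to be \emph{locally uniformly} bounded above, whereas the hypothesis $\rho(z)\le\rho_0$ is only the pointwise statement $\limsup_r u_r(z)\le\rho_0$; you have not explained how to upgrade this. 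The argument in \cite{Christiansen:2007} (following \cite{LG:1986}) avoids both problems by never taking $\log T$: it works directly with the increasing psh family $r\mapsto T(z,r,f)$ and compares it to an auxiliary function of order exactly $\rho_0$, using a result of Lelong--Gruman on the order of growth of monotone families of psh functions rather than the bare $\limsup$ theorem you quote. Your outline can be repaired along these lines, but as written the step ``each $u_r$ is psh'' fails.
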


It follows by Proposition \ref{prop:countingfunc1} that the order of the pole
counting function for $f$, $n(r,f(z, \cdot))$, is the same order $\rho_{0}$
for $z \in\Omega\backslash( E \cup K_{f})$ provided condition (\ref{eq:line1})
and the other hypotheses are satisfied.

\subsection{Density of $\mathcal{M} (g_{0}, K)$}

\label{subsec:density1}

In this subsection we prove

\begin{proposition}
\label{prop:density} The set $\mathcal{M} (g_{0}, K) \subset\mathcal{G}(
g_{0}, K)$ is dense in the $C^{\infty}$ topology.
\end{proposition}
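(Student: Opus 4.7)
Fix $g' \in \mathcal{G}(g_0, K)$; the plan is to find metrics in $\mathcal{M}(g_0, K)$ arbitrarily close to $g'$ in $C^\infty$. If $N_{g'}(r)$ already has maximal order of growth there is nothing to prove, so assume $N_{g'}(r) = o(r^{n+1})$. Enlarging $K$ if necessary so that some ball $B(x_0, 3)$ is contained in it, Theorem \ref{thm:tumor} applied with $g'$ as the background produces $g_1 \in \mathcal{G}(g_0, K)$ agreeing with $g'$ outside $B(x_0, 3)$ and satisfying $N_{g_1}(r) \geq C r^{n+1}$. Form the complex family $g_z = (1-z)g' + z g_1$ for $z \in \Omega_\varepsilon$ and set
\[
f(z, s) := \sigma_{g_z, g'}(s) = \det\bigl[ S_{g_z}(s) S_{g'}(s)^{-1}\bigr],
\]
which is meromorphic in $(z,s) \in \Omega_\varepsilon \times \mathbb{C}$ by Section \ref{sec:complexmetric1}.

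The factorization in Proposition \ref{detsrel.factor} shows that the divisor of $f(z, \cdot)$ is antisymmetric under $s \mapsto n - s$, so $f$ enjoys the symmetry required by Proposition \ref{prop:countingfunc1}, while the pole counting function obeys $n(r, f(z, \cdot)) \leq N_{g_z}(r) + N_{g'}(r)$ with the matching lower bound $n(r, f(1, \cdot)) \geq (C - o(1)) r^{n+1}$ at $z_0 = 1$. To secure the line-growth hypothesis (\ref{eq:line1}) with exponent strictly less than $n+1$, replace $f$ by $\tilde f(z, s) := e^{-\Psi(z, s)} f(z, s)$, where $\Psi(z, s)$ is a polynomial of degree $n+1$ in $s$ whose coefficients are holomorphic in $z \in \Omega_\varepsilon$, chosen so as to cancel the leading Weyl asymptotic $c_n V_{\mathrm{rel}}(z)(s - n/2)^{n+1}$ from Corollary \ref{weyl.asymp}; here $V_{\mathrm{rel}}$ is extended to complex $z$ via $\int_K (\det g_z)^{1/2}\,dx$. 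Since $e^{-\Psi}$ is entire and nonvanishing, $\tilde f$ has the same divisor as $f$. Combined with the uniform upper bound $N_{g_z}(r) \leq C_\varepsilon r^{n+1}$ of Proposition \ref{upper.bound}, this gives that the Nevanlinna order $\rho(z)$ of $\tilde f(z, \cdot)$ is at most $n+1$ throughout $\Omega_\varepsilon \setminus K_f$ and is exactly $n+1$ at $z_0 = 1$.

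Theorem \ref{th:psh1} then yields a pluripolar set $E \subset \Omega_\varepsilon$ such that $\rho(z) = n+1$ for every $z \in \Omega_\varepsilon \setminus (E \cup K_f)$, and Proposition \ref{prop:countingfunc1} converts this into the statement that $n(r, \tilde f(z, \cdot)) = n(r, f(z, \cdot))$ has order $n+1$. Together with $N_{g'}(r) = o(r^{n+1})$, this forces $N_{g_z}(r)$ to have maximal order of growth. Since $(E \cup K_f) \cap \mathbb{R}$ has Lebesgue measure zero and $g_z \to g'$ in $C^\infty$ as $z \to 0$, real values of $z$ arbitrarily close to $0$ satisfy $g_z \in \mathcal{M}(g_0, K)$, which gives the claimed density. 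The main technical hurdle is the construction of the subtraction factor $\Psi(z, s)$: absorbing the leading Weyl term uniformly in complex $z$ requires extending Corollary \ref{weyl.asymp} to hold uniformly on $\Omega_\varepsilon$, relying on the joint holomorphic dependence of $R_{g_z}(s)$ on $(z,s)$ established in Section \ref{subsec:analycont1}, and the generic absence of zeros of $\tilde f(z, \cdot)$ on the line $\Re s = n/2$ (which may be arranged by excluding an additional measure-zero set in $z$).
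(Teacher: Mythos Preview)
Your plan is essentially the paper's argument: relabel so the base metric is resonance-deficient, produce $g_1$ via Theorem~\ref{thm:tumor}, interpolate, subtract off the leading Weyl term from the relative scattering determinant, and invoke Theorem~\ref{th:psh1} together with Proposition~\ref{prop:countingfunc1}. Two of the ``technical hurdles'' you flag are not actually needed, and you miss one small point the paper does handle.

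First, the subtraction factor $\Psi(z,s)$ must be \emph{holomorphic} in $z$ (so that $\tilde f$ stays jointly meromorphic and Theorem~\ref{th:psh1} applies), but the Weyl asymptotic itself is only ever used at \emph{real} $z$: hypothesis~(\ref{eq:line1}) of Proposition~\ref{prop:countingfunc1} is checked pointwise after Theorem~\ref{th:psh1} has produced the good set of $z$'s, and that good set is intersected with $\mathbb{R}$ at the end. The paper simply takes $\Psi(z,s)=c_nV_{\mathrm{rel}}(z)(-is)^{n+1}$ with $V_{\mathrm{rel}}(z)=\int_K(\sqrt{\det g_z}-\sqrt{\det g_0})$ analytically continued in the obvious way; no extension of Corollary~\ref{weyl.asymp} to complex $z$ is required. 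For the same reason, the ``no zeros on $\Re s=n/2$'' hypothesis of Proposition~\ref{prop:countingfunc1} is only needed for real $z$, where it is automatic: the scattering matrices are unitary on the critical line, so $|\sigma_{g_a,g'}|=1$ there, and the exponential prefactor never vanishes. No additional measure-zero exclusion is needed.

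Second, you do not check that $z_0=1\notin K_f$, which Theorem~\ref{th:psh1} requires. This can fail. The paper's fix is to pass to $f_1(z,s):=f(z,s+i)$; since $n/2+i$ is not a pole of $R_{g_1}(s)$ one has $1\notin K_{f_1}$, while $f_1(z,\cdot)$ and $f(z,\cdot)$ have the same order, so one applies Theorem~\ref{th:psh1} to $f_1$ and then Proposition~\ref{prop:countingfunc1} to $f$.

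Finally, a small imprecision: the dichotomy ``$g'$ has maximal order or $N_{g'}(r)=o(r^{n+1})$'' should be ``maximal order or order strictly less than $n+1$''. The weaker $o(r^{n+1})$ suffices for Theorem~\ref{thm:tumor}, but in your last step---deducing that $N_{g_z}$ has order $n+1$ from the fact that $n(r,f(z,\cdot))$ does---you need the stronger statement, since $n(r,f(z,\cdot))\le N_{g_z}(r)+N_{g'}(r)+O(1)$ and an $N_{g'}$ of order exactly $n+1$ (yet still $o(r^{n+1})$) could by itself account for the growth of the pole count.
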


To do this, we need to show that given a metric $\tilde{g}\in\mathcal{G}(
g_{0}, K)$ there is a sequence of metrics in $\mathcal{M} (g_{0}, K)$
approaching $\tilde{g}$ in the $C^{\infty}$ topology. If $\tilde{g}%
\in\mathcal{M} (g_{0}, K)$, we are, of course, done. If not, noting that
$\mathcal{M} (g_{0}, K)= \mathcal{M} (\tilde{g}, K)$ and $\mathcal{G}( g_{0},
K)= \mathcal{G}( \tilde{g}, K)$, we may (by relabeling) reduce the problem to
assuming that $g_{0}$ itself is resonance-deficient, and finding a sequence of
metrics in $\mathcal{M} (g_{0}, K)$ approaching $g_{0}$. In what follows let
\[
\sigma_{g,g_{0}}(s)=\det[ S_{g}(s) S_{g_{0}}^{-1} (s)].
\]

As in section \ref{sec:complexmetric1} we consider a complex interpolation
between a smooth metric $g_{0}$ that is hyperbolic outside a compact
$K^{\prime}\subset X$, and a metric $g_{1}\in\mathcal{M}(g_{0},K)$. The
existence of such a metric $g_{1}$ is precisely the result of section
\ref{sec:tumor}. As in (\ref{eq:complex1}), this interpolated
\textquotedblleft metric\textquotedblright\ is given by $g_{z}=(1-z)g_{0}%
+zg_{1}$, where $z\in\Omega_{\epsilon}$ with $\Omega_{\epsilon}$ as in
(\ref{eq:compldomain1}).

The scattering matrix $S_{g_{z}}(s)$ is defined in section
\ref{sec:complexmetric1} along with the corresponding relative scattering
phase. We define a relative volume factor (see Corollary \ref{weyl.asymp}) by
\begin{align}
V_{\mathrm{rel}}(z)  &  \equiv\Delta\operatorname{Vol}(g_{z},g_{0}%
)\label{eq:relvol1}\\
&  =\int_{K}(\sqrt{\det(g_{z})}-\sqrt{\det(g_{0})})\nonumber\\
&  =\operatorname{Vol}(K,g_{z})-\operatorname{Vol}(K,g_{0}),\nonumber
\end{align}
and note that this is analytic in $z$ in a possibly smaller region that we
still call $\Omega_{\epsilon}$. With $c_{n}$ the constant from Corollary
\ref{weyl.asymp}, we shall use the function
\begin{equation}
f(z,s)=e^{-c_{n}V_{\mathrm{rel}}(z)(-is)^{n+1}}\sigma_{g_{z},g_{0}}(s),
\label{eq:basicfnc1}%
\end{equation}
meromorphic in $(z,s)\in\Omega_{\epsilon}\times\mathbb{C}$.

First, we note from Proposition \ref{detsrel.factor} that if $s_{0}$ is a pole
of $f$ then $n-s_{0}$ is a zero of $f$ and the multiplicities coincide.
Second, using Corollary \ref{weyl.asymp}, we find that for $a\in\mathbb{R}$
and $t\rightarrow\infty$,
\begin{align}
\log f(a,n/2+it)  &  =\log\sigma_{g_{a},g_{0}}(n/2+it)-c_{n}V_{\mathrm{rel}%
}(a)t^{n+1}+O(t^{n})\label{eq:weyl.asymp2}\\
&  =\mathcal{O}(t^{n}).\nonumber
\end{align}
Consequently, hypothesis (\ref{eq:line1}) is
\begin{equation}
\int_{0}^{r}\frac{d}{dt}\log f(n/2+it)~dt=\mathcal{O}(r^{n}).
\label{eq:scatph1}%
\end{equation}
Hence, from Proposition \ref{prop:countingfunc1}, if can can prove that
$f(z,s)$ is order $n+1$ for a large set of $z\in\Omega_{z}$, it will follow
that the corresponding resonance counting function is order $n+1$ for the same
set of $z$.

To this end, we appeal to Theorem \ref{th:psh1}. We know from section
\ref{sec:tumor} that $f(1,s)$ has the correct order of growth $n+1$.
Furthermore, we note the following bound, which follows directly from
Proposition \ref{detsrel.factor}.

\begin{lemma}
\label{l:phase-order1} The order of the function $s \mapsto f(z,s)$ is at most
$n+1$ for $z \in\Omega_{\epsilon}\backslash K_{f}$.
\end{lemma}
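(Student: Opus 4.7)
The plan is to read off the order bound directly from the factorization supplied by Proposition~\ref{detsrel.factor}, combined with the uniform upper bound on resonances from Proposition~\ref{upper.bound}; the Nevanlinna order of a meromorphic function is controlled by the orders of the entire numerator and denominator in any Hadamard-type factorization.

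First, I would use Proposition~\ref{detsrel.factor} to write
\[
f(z,s) = e^{-c_n V_{\mathrm{rel}}(z)(-is)^{n+1} + q(s)}\,\frac{H_z(n-s)\,H_0(s)}{H_z(s)\,H_0(n-s)},
\]
where $q(s)$ is a polynomial of degree at most $n+1$. In particular $f(z,s) = P(z,s)/Q(z,s)$ with
\[
P(z,s) = e^{-c_n V_{\mathrm{rel}}(z)(-is)^{n+1} + q(s)}\,H_z(n-s)\,H_0(s), \qquad Q(z,s) = H_z(s)\,H_0(n-s),
\]
both entire in $s$ for every $z \in \Omega_\varepsilon$.

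Next I would argue that $s \mapsto P(z,s)$ and $s \mapsto Q(z,s)$ are entire of order at most $n+1$. The Hadamard products $H_z(s)$ and $H_0(s)$ are defined in \eqref{pz.def} using canonical factors $E(\,\cdot\,,n+1)$; by Proposition~\ref{upper.bound} the resonance sets $\mathcal{R}_{g_z}$ and $\mathcal{R}_{g_0}$ have convergence exponent at most $n+1$, so standard Hadamard/Weierstrass theory produces entire functions of order at most $n+1$. The exponential prefactor $e^{-c_n V_{\mathrm{rel}}(z)(-is)^{n+1} + q(s)}$ is the exponential of a polynomial in $s$ of degree at most $n+1$, and is therefore also of order at most $n+1$. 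Finite products of entire functions of order $\le n+1$ preserve this bound, so both $P$ and $Q$ have order at most $n+1$.

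Finally, the Nevanlinna characteristic of a ratio satisfies $T(r, P/Q) \le T(r,P) + T(r,Q) + O(1)$, and for an entire function $g$ of order $\rho$ one has $T(r,g) \le \log^+ M(r,g) = O(r^{\rho+\delta})$ for every $\delta>0$. Thus $T(r, f(z,\cdot)) = O(r^{n+1+\delta})$ for every $\delta>0$, and the order of $s \mapsto f(z,s)$ is at most $n+1$. The only mildly delicate point (and hence the closest thing to an obstacle) is to verify that the ingredients $H_z(s)$ are entire of order $\le n+1$ uniformly in $z \in \Omega_\varepsilon\setminus K_f$, which is precisely what Proposition~\ref{upper.bound} provides since the constant $C_\varepsilon$ there is independent of $z$; the exclusion of $K_f$ plays no role in the order estimate itself, only in ensuring the meromorphic decomposition $f = P/Q$ is defined.
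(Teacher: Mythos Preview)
Your proposal is correct and matches the paper's approach exactly: the paper merely states that the lemma ``follows directly from Proposition~\ref{detsrel.factor}'' without further argument, and what you have written is precisely the standard unpacking of that sentence---the Hadamard products $H_z$, $H_0$ have order $\le n+1$ by Proposition~\ref{upper.bound}, the exponential prefactor has polynomial exponent of degree $\le n+1$, and the Nevanlinna order of the ratio is bounded by the orders of numerator and denominator. Your closing remark is also accurate: the restriction $z\notin K_f$ is irrelevant to the order bound itself and is imposed only so that Theorem~\ref{th:psh1} can later be applied.
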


To apply Theorem \ref{th:psh1} we need, in addition, that $z=1$ is not in
$K_{f}$. This may, in fact, fail. But if $1\in K_{f}$, we may consider instead
the function $f_{1}(z,s)=f(z,s+i)$. Then $z=1$ is not in $K_{f_{1}}$, because
$n/2+i$ is not a pole of $R_{g_{1}}(s)$. Thus we may first apply Theorem
\ref{th:psh1} to $f_{1}$, and then apply Proposition \ref{prop:countingfunc1}
to $f$, noting that $s\mapsto f_{1}(z,s)$ and $s\mapsto f(z,s)$ have the same
order. From Theorem \ref{th:psh1}, there exists a pluripolar set
$E\subset\Omega$ so that for all $z\in\Omega_{\epsilon}\backslash(K_{f}\cup
E)$, the resonance counting function has optimal order of growth. Since
$(K_{f}\cup E)\cap\mathbb{R}$ has Lebesgue measure $0$, there is a sequence of
real $\lambda_{j}\downarrow0$ so that $N_{g_{\lambda_{j}}}(r)$ has maximal
order of growth. Then, for any $\epsilon>0$ there is a $J(\epsilon)$ so that
the metric $g_{\lambda_{j}}$ satisfies $d_{\infty}(g_{\lambda_{j}}%
,g_{0})<\epsilon$ whenever $j>J(\epsilon)$. This finishes the proof of
Proposition \ref{prop:density}.

\subsection{The $G_{\delta}$-Property of $\mathcal{M}( g_{0},K)$}

\label{subsec:generic1}

The main result of this subsection is:

\begin{proposition}
\label{prop:Gdelta} The set $\mathcal{M} (g_{0}, K) \subset\mathcal{G}( g_{0},
K)$ is a $G_{\delta}$ set.
\end{proposition}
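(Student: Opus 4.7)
The plan is to express $\mathcal{M}(g_0,K)$ directly as a countable intersection of open sets in $\mathcal{G}(g_0,K)$. Since Proposition~\ref{upper.bound} gives the uniform upper bound $\rho(g)\leq n+1$, the maximal-growth condition $\rho(g)=n+1$ is equivalent to requiring $\rho(g)\geq n+1-1/j$ for every $j\in\mathbb{N}$, which in turn unwinds to: for every $j,R\in\mathbb{N}$ there exists some $r>R$ with $N_g(r)>r^{n+1-1/j}$. Hence
\[
\mathcal{M}(g_0,K)=\bigcap_{j,R\in\mathbb{N}} A_{j,R},\qquad A_{j,R}:=\bigl\{g\in\mathcal{G}(g_0,K):\exists\, r>R,\; N_g(r)>r^{n+1-1/j}\bigr\},
\]
so it suffices to prove each $A_{j,R}$ is open in the $C^\infty$ topology.

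To work with a lower semicontinuous counting function, I would replace $N_g$ by the open-disk count $N_g^\circ(r):=\#\{\zeta\in\mathcal{R}_g:|\zeta-n/2|<r\}$. Given any $g$ with $N_g(r)>r^{n+1-1/j}$, one may slightly increase $r$ to $r'>r$ with $N_g^\circ(r')=N_g(r)$ and, by continuity of $r\mapsto r^{n+1-1/j}$, still $r'^{n+1-1/j}<N_g(r)$; so substituting $N_g^\circ$ for $N_g$ in the definition gives the same set $A_{j,R}$. Then $A_{j,R}$ rewrites as a union over $r>R$ of sets of the form $\{g:N_g^\circ(r)\geq k\}$ with $k=k(j,r)\in\mathbb{N}$ the least integer exceeding $r^{n+1-1/j}$, and it suffices to show that for each fixed $r>0$ and $k\in\mathbb{N}$ the set $\{g\in\mathcal{G}(g_0,K):N_g^\circ(r)\geq k\}$ is open.

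The core step is therefore the continuity of resonances under $C^\infty$-small metric perturbations: if $g\in\mathcal{G}(g_0,K)$ has resonances $\zeta_1,\ldots,\zeta_L\in D(n/2,r)$ with multiplicities $m_1,\ldots,m_L$, then for every sufficiently small $\delta>0$ there is a $C^\infty$-neighborhood $V$ of $g$ in $\mathcal{G}(g_0,K)$ such that for every $\tilde g\in V$ the operator $\Delta_{\tilde g}$ has exactly $m_i$ resonances in $D(\zeta_i,\delta)$ for each $i$, and no other resonances in $\bigcup_i\overline{D(\zeta_i,\delta)}$. I would deduce this from the analytic Fredholm parametrix \eqref{Rparametrix} built in Section~\ref{sec:complexmetric1}: the meromorphic pencil $I-K_1(s;g)-K_2(s;g)$ depends continuously on $g$ in the operator norm on the relevant weighted $L^2$ spaces, because both the interior resolvent $R_{g_a}(s)$ and the commutators of the cutoffs with $\Delta_{g_0}$ do. Resonances are precisely the values of $s$ at which this pencil fails to be invertible, with multiplicity read off from the Laurent expansion of its inverse, and multiplicity-preserving continuity of such poles under norm-continuous perturbations is the standard Gohberg--Sigal / Rouch\'e principle for meromorphic families of Fredholm operators.

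Given this continuity, openness is immediate: if $N_g^\circ(r)\geq k$, choose $\delta$ with $0<\delta<r-\max_i|\zeta_i-n/2|$, so that $\bigcup_i D(\zeta_i,\delta)\subset D(n/2,r)$; then for every $\tilde g\in V$ one has $N_{\tilde g}^\circ(r)\geq\sum_i m_i\geq k$. Therefore each $A_{j,R}$ is open and $\mathcal{M}(g_0,K)$ is a countable intersection of open sets, i.e.\ $G_\delta$. The principal obstacle is the continuity-of-resonances statement; while standard in this setting, it does require a careful pass through the parametrix of Section~\ref{sec:complexmetric1} to verify that $K_1(s;g)$ and $K_2(s;g)$ vary continuously with $g$ in the operator norm, uniformly on compact subsets of the spectral parameter.
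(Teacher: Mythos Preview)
Your approach is correct and takes a genuinely different, more direct route than the paper. The paper shows the complement $\mathcal{G}(g_0,K)\setminus\mathcal{M}(g_0,K)$ is $F_\sigma$: after reducing to a resonance-deficient $g_0$, it introduces a scalar function $h_g(r)$ built from the relative scattering determinant $\sigma_{g,g_0}(s)=\det[S_g(s)S_{g_0}^{-1}(s)]$, proves (Lemma~\ref{l:ordercomp}) that the order of $h_g$ matches that of $N_g$ whenever the latter exceeds the order of $N_{g_0}$, and then shows the sublevel sets $A(M,q,j,\alpha)$ are closed via trace-class convergence $S_{g_m}(s)S_{g_0}^{-1}(s)\to S_g(s)S_{g_0}^{-1}(s)$. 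You instead argue lower semicontinuity of $g\mapsto N_g^\circ(r)$ directly from continuity of resonances. This buys a cleaner statement---no reduction to a resonance-deficient background, no auxiliary $h_g$---at the cost of one point that deserves more care.

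That point is the matching of multiplicities. Gohberg--Sigal gives stability of the \emph{null multiplicity} of the pencil $s\mapsto I-K_1(s;g)-K_2(s;g)$, whereas the paper's resonance multiplicity is $m_g(\zeta)=\operatorname{rank}\operatorname{Res}_{s=\zeta}R_g(s)$, the rank of the $(s-\zeta)^{-1}$ coefficient of $R_g$. For general meromorphic operator families these two notions can differ (a pure second-order pole has residue zero), so lower semicontinuity of the rank count is not automatic from operator Rouch\'e alone. In the present setting they do coincide---this is implicit in the divisor computation behind Proposition~\ref{detsrel.factor} and goes back to \cite{GZ:1995}---but you should cite or verify it explicitly. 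An alternative that sidesteps the issue is to detect resonances as poles of the scalar function $\sigma_{g,g_0}$, whose orders are exactly the $m_g$ by Proposition~\ref{detsrel.factor}, and to which the ordinary Hurwitz theorem applies once trace-norm continuity of $S_g(s)S_{g_0}^{-1}(s)$ in $g$ is established; this is in effect the paper's route, repackaged.
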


If $\mathcal{M}(g_{0},K)=\mathcal{G}(g_{0},K)$, meaning there are no
resonance-deficient metrics in $\mathcal{G}(g_{0},K)$, then there is nothing
to prove. So suppose there is a resonance-deficient metric $g\in
\mathcal{G}(g_{0},K)$. Since $\mathcal{M}(g_{0},K)=\mathcal{M}(g,K)$, and
$\mathcal{G}(g_{0},K)=\mathcal{G}(g,K)$, we may, as before, assume $g_{0}$
itself is resonance- deficient.

Define, for any $g\in\mathcal{G}(g_{0},K)$, $r>0$,
\begin{multline*}
h_{g}(r)=\frac{1}{2\pi i}\int_{0}^{r}t^{-1}\int_{-t}^{t}\frac{\sigma_{g,g_{0}%
}^{\prime}(n/2+i\tau)}{\sigma_{g,g_{0}}(n/2+i\tau)}d\tau dt\\
+\frac{1}{2\pi}\int_{-\pi/2}^{\pi/2}\log\left\vert \sigma_{g,g_{0}%
}(n/2+re^{i\theta})\right\vert d\theta.
\end{multline*}
This function is useful because of the following

\begin{lemma}
\label{l:ordercomp} If $\lim\sup_{r\rightarrow\infty}\dfrac{\log N_{g_{0}}%
(r)}{\log r}=p<n+1$, and $p^{\prime}>p,$ then
\[
\lim\sup_{r\rightarrow\infty}\frac{\log[\max(h_{g}(r),1)]}{\log r}=p^{\prime}%
\]
if and only if $N_{g}(r)$ has order $p^{\prime}$.
\end{lemma}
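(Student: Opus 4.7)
My plan is to identify $h_g(r)$, up to controlled error, with a half-plane Nevanlinna characteristic of the relative scattering determinant $\sigma_{g,g_0}(s)$ on the right half-plane $\Re s \geq n/2$, and then to read off its order from the factorization established in Proposition \ref{detsrel.factor}. The first integral in the definition of $h_g$ encodes, via the argument principle applied to the closed contour consisting of a segment of the critical line and a right semicircle of radius $t$, the integrated counting of poles and zeros of $\sigma_{g,g_0}$ in the right half-disk; the second integral is the standard Nevanlinna boundary term $m(r,\sigma_{g,g_0})$ on that semicircle. This identifies $h_g(r)$ with a quantity in the spirit of the Nevanlinna characteristic $T(r,\sigma_{g,g_0})$ of Section \ref{subsec:characteristic1}, adapted to the relevant half-plane geometry.

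Using Proposition \ref{detsrel.factor} I would write
\[
\sigma_{g,g_0}(s) = e^{q(s)} \frac{H_g(n-s)}{H_g(s)} \frac{H_{g_0}(s)}{H_{g_0}(n-s)},
\]
and analyze the three factors separately. The functional equation $\sigma_{g,g_0}(s)\sigma_{g,g_0}(n-s)=1$, which follows from the unitarity identity $S_g(s)S_g(n-s)=I$, forces $q(s)+q(n-s)$ to be constant, so that $q$, viewed as a polynomial in $(s-n/2)$, consists only of a constant term and odd-degree terms of degree at most $n+1$. For each such odd power $(s-n/2)^{2k+1}$ the two integrals defining $h_g$ can be computed explicitly and produce a real contribution of order $r^{2k+1}$, with the leading polynomial contribution pinned down by $V_{\mathrm{rel}}$ through Corollary \ref{weyl.asymp}. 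For the Hadamard products, standard Nevanlinna theory for canonical products of order at most $n+1$, together with the upper bound of Proposition \ref{upper.bound} to ensure that the convergence exponent of the zero set equals the order, identifies the order of the contribution of $H_g$ (resp.\ $H_{g_0}$) to $h_g$ with the order of $N_g$ (resp.\ $N_{g_0}$).

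Assembling the three contributions, $\max(h_g(r),1)$ has order equal to the maximum of the orders of $N_g$, $N_{g_0}$, and the polynomial $q$. Under the hypothesis $p<n+1$ and $p'>p$, the $N_{g_0}$ contribution is $O(r^{p+\varepsilon})$ for every $\varepsilon>0$, and the odd-power structure of $q$ combined with Corollary \ref{weyl.asymp} ensures that the polynomial piece does not exceed the maximum of the Hadamard pieces in the regime of interest. Hence the order of $h_g$ equals the order of $N_g$ whenever the latter is $\geq p'>p$, establishing both directions of the equivalence. The main obstacle will be the bookkeeping in the polynomial piece: one must verify that the explicit odd-power contributions to the two integrals of $h_g$ produce no spurious growth at an order strictly exceeding that of $N_g$, exploiting precisely the functional-equation symmetry that constrained $q$ in the first place. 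This is the conformally compact analogue of the corresponding calculation in the Euclidean setting.
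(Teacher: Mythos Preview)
Your approach diverges from the paper's in a way that creates the very obstacle you worry about. You interpret $h_g(r)$ as a half-plane Nevanlinna characteristic and then try to analyze it factor by factor via the Hadamard factorization, which forces you to control the polynomial $q$. The paper takes a shorter route: the \emph{sum} of the two integrals defining $h_g(r)$ is recognized, via a Jensen-type identity on the right half-disk (essentially \cite[Lemma~6.1]{froese}, adapted from holomorphic to meromorphic functions), as exactly
\[
h_g(r)=Z_{\sigma_{g,g_0}}(r)-P_{\sigma_{g,g_0}}(r),
\]
the integrated counts of zeros and poles of $\sigma_{g,g_0}$ in $\{\Re s>n/2,\ |s-n/2|\le r\}$. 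Note in particular that the second integral uses $\log|\sigma|$, not $\log^{+}|\sigma|$, so it is \emph{not} the Nevanlinna proximity term $m(r,\sigma)$; it is the semicircle boundary contribution in the Jensen formula, and the two terms do not split as ``counting term plus $m$-term'' in the way you describe. Once $h_g=Z-P$ is established, only the divisor of $\sigma_{g,g_0}$ matters, and the polynomial $e^{q(s)}$ contributes nothing whatsoever. The relative multiplicity identity
\[
\mu_{\mathrm{rel}}(s)=m_g(n-s)-m_g(s)-m_{g_0}(n-s)+m_{g_0}(s),\qquad \Re s>n/2,
\]
which follows from Proposition~\ref{detsrel.factor}, then shows that $P_{\sigma_{g,g_0}}$ has order at most $p$ (the poles in $\Re s>n/2$ come from $m_{g_0}(n-s)$ together with the finitely many $m_g(s)$). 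Hence $h_g$ has order $p'>p$ if and only if $Z_{\sigma_{g,g_0}}$ does, if and only if $N_g$ does.

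Your factor-by-factor route could in principle be made to work, but the ``main obstacle'' you flag---ruling out spurious $r^{n+1}$ growth from $q$---is in fact a non-issue: the two integrals in $h_g$ are constructed so that any entire nonvanishing factor contributes identically zero, precisely because $h_g$ detects only the divisor. Recognizing this collapses your argument to the paper's, and spares you the functional-equation and parity bookkeeping on $q$ entirely.
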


\begin{proof}

Let $f$ be meromorphic in a neighborhood of the closed half plane $\{ s:
\Re(s)\geq n/2\}$, and such that $f$ has neither zeros nor poles on the line
$\Re(s)=n/2$. Let $Z_{f}(r) =\int_{0}^{r} t^{-1} n_{f,Z}(t) dt$ where
$n_{f,Z}(r)$ is the number of zeros of $f(s)$ in $\{ s: \Re(s)> n/2, \;
|s-n/2|\leq r\}$, and define $P_{f,r}$ analogously as counting the poles of
$f$ in the same region. Then
\begin{align}
\label{eq:zeros-poles}Z_{f}(r)-P_{f}(r)  &  = \frac{1}{2\pi} \Im\int_{0}^{r}
t^{-1} \int_{-t}^{t} \frac{ f^{\prime}(n/2+i\tau)}{f(n/2+i\tau)} d\tau d t\\
&  + \frac{1}{2 \pi} \int_{-\pi/2}^{\pi/2} \log|f(n/2 +r e^{i\theta}%
)|d\theta.\nonumber
\end{align}
This identity follows essentially exactly as the proof of \cite[Lemma
6.1]{froese}, the primary difference being the application of the argument
principle for meromorphic, rather than holomorphic, functions.

For $\Re(s_{0})>n/2$, if $s_{0}$ is a pole of order $k$ of $\sigma_{g,g_{0}%
}(s)$, set $\mu_{\text{rel}}(s_{0})=-k$; otherwise, set $\mu_{\text{rel}%
}(s_{0})$ to be the order of the zero of $\sigma_{g,g_{0}}(s)$ at $s_{0}$ (of
course, $\mu_{\text{rel}}(s_{0})=0$ if $s_{0}$ is neither a zero nor a pole).
Now we use again, as follows from Proposition \ref{detsrel.factor} that for
$\Re(s)>n/2$,
\begin{equation}
\label{eq:relativemult}\mu_{rel}(s)= m_{g}(n-s)-m_{g}(s)-m_{g_{0}%
}(n-s)+m_{g_{0}}(s)
\end{equation}
where $m_{g}$ (resp., $m_{g_{0}}$) is as defined in (\ref{eq:multiplicity1})
for the metric $g$ (resp. $g_{0}$).

In the notation of (\ref{eq:zeros-poles}), the order of $P_{\sigma_{g,g_{0}}%
}(r)$ is at most $p$, the order of the resonance counting function for
$\Delta_{g_{0}}$. Thus, using (\ref{eq:zeros-poles}),
\[
\lim\sup_{r\rightarrow\infty}\left(  \frac{\log[\max(h_{g}(r),1)]}{\log
r}\right)  =p^{\prime}>p
\]
if and only if the order of $Z_{\sigma_{g,g_{0}}}(r)$ is $p^{\prime}$. The
order of $Z_{\sigma_{g,g_{0}}}(r)$ is the same as the order of $n_{\sigma
_{g,g_{0}},Z}(r)$. Using (\ref{eq:relativemult}) and the fact that $N_{g_{0}%
}(r)$ has order $p$, the order of $n_{\sigma_{g,g_{0}},Z}(r)$ is $p^{\prime
}>p$ if and only if the order of $N_{g}(r)$ is $p^{\prime}$.
\end{proof}

Define, for $M,\;q,\;j,\;\alpha>0$, the set
\begin{multline*}
A(M,q,j,\alpha)=%
\bigl\{%
g\in\mathcal{G}(g_{0},K):\;\\
\sum_{i,l}g^{il}\xi_{i}\xi_{l}\geq\alpha|\xi|^{2}\text{ on }K,h_{g}(r)\leq
M(1+r^{q})\text{ for }0\leq r\leq j%
\bigr\}%
.
\end{multline*}

\begin{lemma}
For $M,\; q,\; j,\;\alpha>0$, the set $A(M, q,j,\alpha)$ is closed.
\end{lemma}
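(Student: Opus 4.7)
The plan is to verify closedness directly: take a sequence $g_k \in A(M,q,j,\alpha)$ converging to $g$ in the $C^\infty$ topology on $\mathcal{G}(g_0,K)$ and show $g \in A(M,q,j,\alpha)$. The uniform ellipticity condition $\sum g^{il}\xi_i\xi_l \geq \alpha|\xi|^2$ on $K$ passes to the limit immediately, since $C^\infty$ convergence on the compact set $K$ implies uniform convergence of the coefficients of the inverse metric. So the substantive work is to show $h_g(r) \leq M(1+r^q)$ for all $0 \leq r \leq j$.

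The key intermediate step is the continuity statement $\sigma_{g_k,g_0}(s) \to \sigma_{g,g_0}(s)$ locally uniformly on $\mathbb{C}$ away from zeros and poles of $\sigma_{g,g_0}$. To obtain this I would use that $C^\infty$ convergence $g_k \to g$ yields norm resolvent convergence $R_{g_k}(s) \to R_g(s)$ for $s$ off the resonance set of $g$. Plugging this into the identity (\ref{Srel.identity}), whose right-hand side isolates the $g$-dependence in a single factor of $R_g(s)$ sandwiched between fixed smoothing operators built from $[\Delta_{g_0},\chi_1]$ and the unperturbed Poisson operators $E_{g_0}(s)$, $E_{g_0}(n-s)$, one reads off trace-norm convergence of $S_{g_k}(s)S_{g_0}(s)^{-1} - I \to S_{g}(s)S_{g_0}(s)^{-1} - I$ locally uniformly in $s$, and hence Fredholm determinant convergence of $\sigma_{g_k,g_0}$ together with convergence of the logarithmic derivatives $\sigma'_{g_k,g_0}/\sigma_{g_k,g_0}$.

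With this in hand, fix $r \in [0,j]$ such that $\sigma_{g,g_0}$ has neither zero nor pole on the segment $\{n/2+i\tau:|\tau|\leq r\}$ or on the half-circle $\{n/2+re^{i\theta}:|\theta|\leq \pi/2\}$. Dominated convergence applies to both integrals defining $h_{g_k}(r)$: the first integrand converges uniformly on the compact segment, while for the second, singularities of $\log|\sigma_{g_k,g_0}|$ on the half-circle appear only at discretely many points that converge to the (finitely many) singular points of $\log|\sigma_{g,g_0}|$, yielding uniform integrability against $d\theta$. Thus $h_{g_k}(r) \to h_g(r) \leq M(1+r^q)$ on the complement of an at most countable exceptional set in $[0,j]$. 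Since $h_g(\cdot)$ is continuous in $r$—the integrated zero/pole counting functions are continuous and the $\theta$-averages of $\log|\sigma_{g,g_0}|$ vary continuously through isolated logarithmic singularities—the inequality extends to all of $[0,j]$, so $g\in A(M,q,j,\alpha)$.

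The main obstacle is the trace-norm convergence of the relative scattering matrix from the hypothesized $C^\infty$ convergence of metrics: one must convert abstract norm-resolvent convergence of the Laplacians into Fredholm determinant convergence uniformly on compact subsets of the resolvent set. The formula (\ref{Srel.identity}) is precisely tailored for this purpose, reducing the question to the continuity in $g$ of $R_g(s)$ sandwiched between the fixed compactly supported smoothing factors $([\Delta_{g_0},\chi_1]E_{g_0}(s))^t$ and $[\Delta_{g_0},\chi_1]E_{g_0}(n-s)$; still, verifying the required uniformity in $s$ is the technical heart of the argument.
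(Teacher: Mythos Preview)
Your proposal is correct and follows essentially the same approach as the paper: both argue that $C^\infty$ convergence of metrics gives (cut-off) resolvent convergence, feed this through the scattering-matrix identities to obtain trace-norm convergence of $S_{g_k}(s)S_{g_0}(s)^{-1}$ and hence determinant convergence, deduce $h_{g_k}(r)\to h_g(r)$ for all but a discrete set of $r\in[0,j]$, and then invoke continuity of $h_g$ to close the inequality on all of $[0,j]$. The only cosmetic difference is that you cite (\ref{Srel.identity}) directly while the paper cites the intermediate identities (\ref{Ez.E0}) and (\ref{Sz.S0}) from which it is derived.
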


\begin{proof}
Let $g_{m}\in A(M, q,j,\alpha)$ be a sequence of metrics converging in the
$C^{\infty}$ topology. Since $\sum_{i,j}g_{m}^{ij}\xi_{i} \xi_{j} \geq
\alpha|\xi|^{2} $, $\{g_{m}\}$ converges to a metric $g$ with the same property.

Since $g_{m}\rightarrow g$ in the $C^{\infty}$ topology, we also have
convergence of the cut-off resolvents: for $\chi\in C_{c}^{\infty}(X)$, $\chi
R_{g_{m}}(s)\chi\rightarrow\chi R_{g}(s)\chi$ for values of $s$ for which
$\chi R_{g}(s)\chi$ is a bounded operator. This includes the closed half plane
$\{\Re(s)\geq n/2\}$ with the possible exception of a finite number of points
corresponding to the discrete spectrum. Thus using the equations (\ref{Ez.E0})
and (\ref{Sz.S0}) for the scattering matrix, we see that if $\Re(s_{0})\geq
n/2$, $S_{g_{0}}$ has no null space at $s_{0}$, $\Re(s_{0})>n/2$, and
$s_{0}(n-s_{0})$ is not an eigenvalue of $\Delta_{g}$, then $S_{g_{m}%
}(s)S_{g_{0}}^{-1}(s_{0})\rightarrow S_{g}(s_{0})S_{g_{0}}^{-1}(s_{0})$ in the
trace class norm. This convergence is uniform on compact sets which include no
poles of either $S_{g_{0}}^{-1}$ or of $R_{g}$. Thus, if the set
$\{s:\Re(s)>n/2,\;|s-n/2|=r\}$ contains no zeros of $S_{g_{0}}(s)$ or of
$S_{g}(s)$, then $h_{g_{m}}(r)\rightarrow h_{g}(r)$. Thus $h_{g_{m}%
}(r)\rightarrow h_{g}(r)$ for all but a discrete set of values of $r$ in
$[0,j]$. Since $h_{g}(r)$ and $h_{g_{m}}(r)$ are continuous, we get the
desired upper bound on $h_{g}(r)$ for all $r\in\lbrack0,j]$.
\end{proof}

Now, for $M,\; q,\; \alpha>0$, set
\[
B(M,q,\alpha)= \cap_{j\in{\mathbb{N}}} A(M,q,j,\alpha).
\]
The set $B(M,q,\alpha)$ is closed since $A(M,q,j,\alpha)$ is closed. The proof
of Proposition \ref{prop:Gdelta} is completed by the following lemma.

\begin{lemma}
If $g_{0}$ is resonance-deficient, then
\[
\mathcal{G}(g_{0},K)\setminus\mathcal{M}(g_{0},K)=\cup_{(M,l,m)\in{\mathbb{N}%
}^{3}}B(M,n+1-1/l,1/m).
\]

\end{lemma}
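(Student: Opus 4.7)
The plan is to unpack the definition of $B(M, n+1-1/l, 1/m)$ and match it, via Lemma~\ref{l:ordercomp}, to the condition that $g$ is resonance-deficient. Because $B(M,q,\alpha)=\bigcap_{j}A(M,q,j,\alpha)$, a metric $g$ lies in $B(M,n+1-1/l,1/m)$ exactly when $g$ is uniformly elliptic on $K$ with ellipticity constant at least $1/m$ and $h_{g}(r)\le M(1+r^{n+1-1/l})$ for \emph{every} $r\ge 0$. Taking the union over $(M,l,m)\in\mathbb{N}^{3}$ thus sweeps out those $g\in\mathcal{G}(g_{0},K)$ for which $h_{g}$ admits some global polynomial bound of order strictly less than $n+1$; the ellipticity clause is automatic for any Riemannian metric on the compact set $K$, so it only matters to verify that the $h_{g}$-growth condition is equivalent to $g$ being resonance-deficient.

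For the inclusion $\mathcal{G}(g_{0},K)\setminus\mathcal{M}(g_{0},K)\subseteq\bigcup_{(M,l,m)}B(\cdots)$, I take a resonance-deficient $g$ so that $N_{g}(r)$ has order $q''<n+1$. Since $g_{0}$ itself has some order $p<n+1$, applying Lemma~\ref{l:ordercomp} as a family of equivalences over all $p'\in(p,n+1)$, together with the a priori bound that $\max(h_{g},1)$ has order at most $n+1$ (inherited from the factorization of $\sigma_{g,g_{0}}$ in Proposition~\ref{detsrel.factor} and the upper bound of Proposition~\ref{upper.bound}), forces $\max(h_{g},1)$ to have some order $q<n+1$. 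Choose $l\in\mathbb{N}$ with $n+1-1/l>q$; then by the definition of order together with continuity of $h_{g}$ on any bounded interval, there is an integer $M$ with $h_{g}(r)\le M(1+r^{n+1-1/l})$ for all $r\ge 0$. Picking $m$ so that $1/m$ does not exceed the ellipticity constant of $g$ on $K$ places $g$ in $B(M,n+1-1/l,1/m)$.

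For the reverse inclusion, if $g\in B(M,n+1-1/l,1/m)$ then the global bound $h_{g}(r)\le M(1+r^{n+1-1/l})$ forces $\max(h_{g},1)$ to have order at most $n+1-1/l<n+1$. Applying Lemma~\ref{l:ordercomp} contrapositively: if $N_{g}$ had maximal order $n+1$, then (combining with the upper bound) the order of $\max(h_{g},1)$ would equal $n+1$, contradicting the polynomial bound. Hence $g$ is resonance-deficient and lies in $\mathcal{G}(g_{0},K)\setminus\mathcal{M}(g_{0},K)$.

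The main obstacle is the small gap between the statement of Lemma~\ref{l:ordercomp}, which furnishes an equivalence only for trial orders $p'$ strictly above $p$, and the equivalence actually needed (growth order of $N_{g}$ is $<n+1$ if and only if growth order of $\max(h_{g},1)$ is $<n+1$). This is bridged by applying the lemma for every admissible $p'\in(p,n+1]$ and invoking the universal upper bound of order $n+1$ on both $N_{g}$ and $\sigma_{g,g_{0}}$ to rule out the ``order $>n+1$'' possibility. With that equivalence in hand, the remaining work---translating between ``order strictly less than $n+1$'' and ``polynomially bounded by $M(1+r^{n+1-1/l})$ for some $l,M\in\mathbb{N}$''---is a routine consequence of the definition of order.
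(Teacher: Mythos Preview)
Your proposal is correct and follows essentially the same approach as the paper's proof: both directions are handled by translating the polynomial bound on $h_g$ into a statement about the order of $\max(h_g,1)$ and then invoking Lemma~\ref{l:ordercomp}. You are more explicit than the paper about the potential gap arising from the hypothesis $p'>p$ in Lemma~\ref{l:ordercomp}, and your resolution (running the lemma over all admissible $p'$ and using the contrapositive) is exactly what the paper's terse phrase ``at most the maximum of $n+1-1/l$ and the order of growth of $N_{g_0}$'' is encoding.
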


\begin{proof}
If $g\in B(M,n+1-1/l,1/m)$ for some $M,\;l,\;m>0$, then by Lemma
\ref{l:ordercomp} the order of growth of $N_{g}(r)$ is at most the maximum of
$n+1-1/l$ and the order of growth of the resonance counting function of
$N_{g_{0}}$, so $g\not \in \mathcal{M}(g_{0},K).$

Suppose $g\in\mathcal{G}(g_{0},K)\setminus\mathcal{M}(g_{0},K)$. Then the
order of $N_{g}(r)$ is $p^{\prime}$ for some $p^{\prime}<n+1$. An application
of Lemma \ref{l:ordercomp} shows that there are integers $M$ and $l$ so that
$p^{\prime}<n+1-1/l<n+1$ and $g\in B(M,n+1-1/l,\alpha)$ for some $\alpha>0$
sufficiently small.
\end{proof}

\bigskip

\begin{proof}
[Proof of part (ii) of Theorem \ref{thm:main}:]This is immediate from
Propositions \ref{prop:density} and \ref{prop:Gdelta}.
\end{proof}

\appendix

\section{Estimates for the wave trace}

\label{app:wavetrace1}

In this appendix we prove a key lemma , essentially taken from
Sj\"{o}strand-Zworski \cite{SZ:1993}, which plays an important role in section
\ref{sec:tumor}. To formulate the statement, recall that $\Psi_{\mathrm{phg}%
}^{m}(M)$ denotes the polyhomogeneous pseudodifferential operators of order
$m$ on $M$. For $P\in\Psi_{\mathrm{phg}}^{m}(X)$, we recall that the
\emph{essential support} of $P$, denoted $\operatorname{ES} (P)$, as follows.
For a conic open subset $U$ of $T^{\ast}M$, we say that $P$ has order
$-\infty$ on $U$ if $\left\vert p(x,\xi)\right\vert \leq C_{N}\left(
1+\left\vert \xi\right\vert \right)  ^{-N}$ for every $N$ and $\left(
x,\xi\right)  \in U$. The essential support $\operatorname{ES}(P)$ is the
smallest conic subset of $T^{\ast}M$ on the complement of which $P$ has order
$-\infty$ (see for example Taylor \cite[Chapter VI, Definition 1.3]%
{Taylor:1991} for discussion). Note that $\operatorname{ES} (P_{1}%
P_{2})\subset\operatorname{ES} (P_{1})\cap\operatorname{ES} (P_{2})$ by the
usual symbol calculus (see for example Taylor \cite{Taylor:1991}, \S 0.10 for
further discussion). In particular, if $P_{1}$ and $P_{2}$ have disjoint
essential supports, then $P_{1}P_{2}$ is a smoothing operator.

For a pseudodifferential operator $A$, we set%
\[
\operatorname{S-ES}(A)=\operatorname{ES}(A)\cap S^{\ast}M.
\]
We denote by $\operatorname*{dist}_{S^{\ast}M}$ the distance on $S^{\ast}M$
induced by the Riemannian metric on $S^{\ast}M$. Since the essential support
is a conic set these two notions are equivalent. One should think of the
pseudodifferential operators $B$ and $C$ that occur in Lemma \ref{lemma.2} as
smoothed characteristic functions of a small region of $S^{\ast}X$ so that the
operator $C$ has a wave front set slightly bigger than that of $B$ and $B\sim
C^{2}$; compare \cite{SZ:1993}, pp. 854-855. In what follows, $Q$ is a
first-order, self-adjoint, scalar pseudodifferential operator (one should
think of $Q=\sqrt{\Delta-n^{2}/4}$ in the application) and $V(t)=\exp(itQ)$;
thus $Q$ here occurs in the diagonalization of the matrices $Q$ that occur in
section \ref{sec:tumor}).

\begin{lemma}
\label{lemma.2} Let $Q\in OPS_{1,0}^{1}(M)$ and let $B\in\Psi_{\mathrm{phg}%
}^{0}(M)$. Let $\chi\in\mathcal{C}_{0}^{\infty}(\mathbb{R})$ with support near
$t=0$, $\chi(0)\neq0$ and $\widehat{\chi}(t)\geq0$. Let $C$ be a self-adjoint
operator in $\Psi_{\mathrm{phg}}^{0}(X)$ with $\left(  x,\omega\right)
\notin\operatorname{S-ES}(I-C)$ if $\operatorname*{dist}_{S^{\ast}M}\left(
\left(  x,\omega\right)  ,\operatorname{S-ES}(B)\right)  \leq1$ and $\left(
x,\omega\right)  \notin\operatorname{S-ES}(C)$ if $\operatorname*{dist}%
_{S^{\ast}M}((x,\xi),\operatorname{S-ES}(B))\geq2$. Then
\begin{multline}
\left\vert \int e^{-i\lambda t}\chi(t-T)\operatorname*{Tr}\left(
V(t)B\right)  ~dt\right\vert \label{eq:szform}\\
\leq c_{n}~\chi(0)~\left\Vert B\right\Vert \left(  \int_{S^{\ast}X}\left\vert
c(x,\omega)\right\vert ^{2}~dx~d\omega\right)  \lambda^{n}+\mathcal{O}%
_{B,T,\chi}\left(  \lambda^{n-1}\right)  .\nonumber
\end{multline}

\end{lemma}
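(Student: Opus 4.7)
By the spectral theorem for $Q$ and Fourier inversion,
\[
\int e^{-i\lambda t}\chi(t-T)\,V(t)\,dt \;=\; e^{-i\lambda T}\,e^{iTQ}\,h(Q),
\qquad h(\mu):=\widehat\chi(\lambda-\mu)\ge 0,
\]
so the left-hand side of (\ref{eq:szform}) equals $\bigl|\operatorname{Tr}[B\,e^{iTQ}h(Q)]\bigr|$ in modulus. The essential-support conditions on $C$ force $(I-C^2)B$ and $B(I-C^2)$ to be smoothing, which gives
\[
B \;=\; CBC + R \pmod{\Psi^{-\infty}}, \qquad R := -C[B,C] \in \Psi^{-1}_{\mathrm{phg}}(M).
\]

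For the principal piece $\operatorname{Tr}[CBC\,e^{iTQ}h(Q)]$, exploit $h\ge 0$ to split $h(Q)=h^{1/2}(Q)\,h^{1/2}(Q)$, commute $h^{1/2}(Q)$ past $e^{iTQ}$, and cycle under the trace to obtain
\[
\operatorname{Tr}[CBC\,e^{iTQ}h(Q)] \;=\; \operatorname{Tr}[X^{*}BX\,e^{iTQ}],\qquad X := C\,h^{1/2}(Q).
\]
The operator $X$ is Hilbert--Schmidt with $\|X\|_{\mathrm{HS}}^2 = \operatorname{Tr}[C^{2}h(Q)]$ (using $C=C^{*}$ and cyclicity), and since $e^{iTQ}$ is unitary,
\[
\bigl|\operatorname{Tr}[X^{*}BX\,e^{iTQ}]\bigr| \;\le\; \|X^{*}BX\|_{1} \;\le\; \|B\|\,\|X\|_{\mathrm{HS}}^{2} \;=\; \|B\|\,\operatorname{Tr}[C^{2}h(Q)].
\]

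The sharp local Weyl law for the self-adjoint operator $Q$ (principal symbol $|\omega|_g$), applied to the order-zero operator $C^{2}$ (principal symbol $|c|^2$) against the spectral bump $h$ and using $\int\widehat\chi(s)\,ds = 2\pi\chi(0)$ together with fibre polar coordinates on $T^{*}X$, then yields
\[
\operatorname{Tr}[C^{2}h(Q)] \;=\; c_n\,\chi(0)\int_{S^{*}X}|c(x,\omega)|^{2}\,dx\,d\omega\;\lambda^{n} \;+\; \mathcal{O}(\lambda^{n-1}).
\]
The remainder $\operatorname{Tr}[R\,e^{iTQ}h(Q)]$ is handled by factoring $R=\tilde R\langle Q\rangle^{-1}$ with $\tilde R\in\Psi^{0}$ bounded, rewriting the trace as $\operatorname{Tr}[\tilde R\,e^{iTQ}h_1(Q)]$ with $h_1(\mu):=\langle\mu\rangle^{-1}h(\mu)$ (a spectral bump at $\lambda$ of size $\lambda^{-1}$), and reapplying the Hilbert--Schmidt argument of the previous paragraph; this gains one power of $\lambda^{-1}$ and produces the claimed $\mathcal{O}(\lambda^{n-1})$ error. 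Combining these pieces yields (\ref{eq:szform}).

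The main technical obstacle is the sharp Weyl-type trace asymptotic with controlled remainder in the (possibly noncompact) setting: the essential-support conditions on $C$ ensure that $C^{2}h(Q)$ and the analogous spectral traces are finite, so standard Duistermaat--Guillemin / H\"ormander short-time wave-kernel asymptotics apply and produce the precise leading coefficient. The $T$-independence of that coefficient reflects the fact that $e^{iTQ}$ is unitary and commutes with $h(Q)$, so all $T$-dependence collapses in the identity $\|X\|_{\mathrm{HS}}^{2}=\operatorname{Tr}[C^{2}h(Q)]$ and survives only in the $\mathcal{O}(\lambda^{n-1})$ error.
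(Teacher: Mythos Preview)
Your argument is correct and reaches the same endpoint as the paper: both rewrite the integral as $e^{-i\lambda T}\operatorname{Tr}\!\bigl[e^{iTQ}\widehat\chi(\lambda-Q)B\bigr]$, strip off $e^{iTQ}$ by unitarity and $B$ by an operator-norm bound, use positivity of $\widehat\chi$ to convert a trace norm into a trace, arrive at $\|B\|\operatorname{Tr}\!\bigl[C^{2}\widehat\chi(\lambda-Q)\bigr]$, and finish with the H\"ormander short-time parametrix (local Weyl law) for the leading coefficient.

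The only organizational difference is in how the cutoff $C$ is inserted. The paper splits $\widehat\chi(\lambda-Q)B$ as $(I-C)\widehat\chi(\lambda-Q)B + C\widehat\chi(\lambda-Q)(I-C)B + C\widehat\chi(\lambda-Q)CB$ and uses a small-time Egorov argument (the conjugate $e^{isQ}Be^{-isQ}$ keeps its essential support near $\operatorname{S-ES}(B)$ for $s\in\operatorname{supp}\chi$) to show the first two pieces have $\mathcal{I}_{1}$ norm bounded uniformly in $\lambda$; positivity of $C\widehat\chi(\lambda-Q)C$ then gives $\|C\widehat\chi(\lambda-Q)C\|_{\mathcal{I}_{1}}=\operatorname{Tr}\!\bigl[C^{2}\widehat\chi(\lambda-Q)\bigr]$. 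You instead replace $B$ by $CBC$ algebraically and invoke the Hilbert--Schmidt factorization $h(Q)=h^{1/2}(Q)^{2}$ --- the same positivity trick, repackaged --- which lets you bypass the Egorov step. One remark: under the stated hypotheses $[B,C]=(I-C)B-B(I-C)$ is already in $\Psi^{-\infty}$ (the essential supports are disjoint), so your remainder $R=-C[B,C]$ is smoothing, not merely of order $-1$; the separate $h_{1}(\mu)=\langle\mu\rangle^{-1}h(\mu)$ argument is therefore unnecessary, though harmless.
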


\begin{proof}
Following Sj\"{o}strand and Zworski \cite{SZ:1993} we set $t=T+s$ and write%
\begin{align*}
M(\lambda)  &  :=\int e^{-i\lambda t}\chi(t-T)\operatorname*{Tr}\left(
V(t)B\right)  ~dt\\
&  =\int e^{-i\lambda T}e^{-i\lambda s}\chi(s)\operatorname*{Tr}\left(
e^{iTQ}e^{isQ}B\right)  ~dt\\
&  =e^{-i\lambda T}\operatorname*{Tr}\left(  e^{iTQ}\widehat{\chi}%
(\lambda-Q)B\right)
\end{align*}
so that%
\[
\left\vert M(\lambda)\right\vert \leq\left\Vert \widehat{\chi}(\lambda
-Q)B\right\Vert _{\mathcal{I}_{1}}%
\]
where we have used the fact that $\left\Vert AB\right\Vert _{\mathcal{I}_{1}%
}\leq\left\Vert A\right\Vert \left\Vert B\right\Vert _{\mathcal{I}_{1}}$ to
eliminate the unitary group $e^{iTQ}$ and reduce to a \textquotedblleft
small-time\textquotedblright\ estimate. Here and in what follows, $\left\Vert
~\cdot~\right\Vert $ denotes the operator norm. For any fixed smoothing
operator $S$, $\left\Vert \widehat{\chi}(\lambda-Q)S\right\Vert =\mathcal{O}%
\left(  \lambda^{-\infty}\right)  $. From the essential support properties of
$B$ and $C$, it is clear that $B\left(  I-C\right)  $ and $(I-C)B$ are
smoothing. Moreover, the operator
\[
\left(  I-C\right)  \widehat{\chi}(\lambda-Q)B=\int\chi(s)e^{-i\lambda
s}\left(  I-C\right)  e^{isQ}B~ds
\]
obeys the estimate%
\[
\left\Vert \left(  I-C\right)  \widehat{\chi}(\lambda-Q)B\right\Vert
_{\mathcal{I}_{1}}\leq\int\left\vert \chi(s)\right\vert \left\Vert \left(
I-C\right)  B(s)\right\Vert _{\mathcal{I}_{1}}~ds
\]
where $B(s):=e^{isQ}Be^{-isQ}$ has wave front set disjoint from
$\operatorname{S-ES}(I-C)$ for small $s$ owing to the support properties of
$C$, so that the trace-norm under the integral is finite. By continuity
$\left\Vert \left(  I-C\right)  B(s)\right\Vert _{\mathcal{I}_{1}}$ is bounded
for small $s$ so that
\[
\left\Vert \left(  I-C\right)  \widehat{\chi}(\lambda-Q)B\right\Vert
_{\mathcal{I}_{1}}\leq C
\]
uniformly in $\lambda$. Hence, we may estimate%
\begin{align*}
\left\vert M(\lambda)\right\vert  &  \leq\left\Vert \left(  I-C\right)
\widehat{\chi}(\lambda-Q)B\right\Vert _{\mathcal{I}_{1}}+\left\Vert
C\widehat{\chi}(\lambda-Q)\left(  I-C\right)  B\right\Vert _{\mathcal{I}_{1}%
}+\left\Vert C\widehat{\chi}(\lambda-Q)CB\right\Vert _{\mathcal{I}_{1}}\\
&  \leq\left\Vert B\right\Vert \left\Vert C\widehat{\chi}(\lambda
-Q)C\right\Vert _{\mathcal{I}_{1}}+\mathcal{O}_{B,T,\chi}\left(  1\right)
\end{align*}
where $\mathcal{O}_{B,T}(1)$ denotes a constant depending on $B$, $T$, and
$\chi$ but independent of $\lambda$. Since $\widehat{\chi}$ is positive and
$C$ is self-adjoint, we have%
\begin{align*}
\left\Vert C\widehat{\chi}(\lambda-Q)C\right\Vert _{\mathcal{I}_{1}}  &
=\operatorname*{Tr}\left(  C\widehat{\chi}(\lambda-Q)C\right) \\
&  =\int e^{-i\lambda s}\chi(s)\operatorname*{Tr}\left(  C^{2}e^{isQ}\right)
~ds.
\end{align*}
We now use H\"{o}rmander's lemma, Lemma \ref{lemma:hormander1} below, to
complete the proof.
\end{proof}

Let $X$ be a compact connected manifold without boundary. H\"ormander's lemma
is the following result and appears as \cite[Proposition 29.1.2]%
{HormanderIV:1985}.

\begin{lemma}
\label{lemma:hormander1} Let $B\in\Psi_{\mathrm{phg}}^{0}(X,\Omega
^{1/2},\Omega^{1/2})$ with principal symbol $b$ and subprincipal symbol
$b^{s}$, and let $P$ have principal symbol $p$ and subprincipal symbol $p^{s}%
$. Let $E(t)$ solve $\left(  D_{t}+P\right)  E(t)=0$ with $E(0)=I$. Let $K$ be
the restriction to the diagonal $\Delta$ of the Schwarz kernel of $E(t)B$.
Then~$K$ is conormal with respect to $\Delta\times\left\{  0\right\}  $ for
$\left\vert t\right\vert $ small and
\begin{equation}
K(t,y)=\int\frac{\partial A(y,\lambda)}{\partial\lambda}e^{-i\lambda
t}~d\lambda, \label{eq:expansion1}%
\end{equation}
where%
\begin{align}
A(y,\lambda)  &  =(2\pi)^{-n}\int_{p(y,\eta)<\lambda}(b+b^{s})(y,\eta
)~d\eta\label{eq:expansion2}\\
&  +\frac{\partial}{\partial\lambda}\int_{p(x,\eta)<\lambda}\left(
p^{s}b+\frac{1}{2}\left\{  b,p\right\}  \right)  ~d\eta\nonumber\\
&  +(S^{n-2})\nonumber
\end{align}
where $S^{n-2}$ means a symbol of order $n-2$ in the $\lambda$ variable.
\end{lemma}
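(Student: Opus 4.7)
The plan is to build a short-time Fourier integral operator (FIO) parametrix for $E(t)$, compose it with $B$, restrict the Schwartz kernel to the diagonal, and then extract the expansion (\ref{eq:expansion2}) via a change of frequency variables. So first I would construct the Lax parametrix: for $|t|$ small, write
\[
E(t)u(x) = (2\pi)^{-n}\int e^{i\phi(t,x,y,\eta)}a(t,x,y,\eta)u(y)\,dy\,d\eta \mod C^{\infty},
\]
where $\phi$ solves the eikonal equation $\partial_t\phi + p(x,\partial_x\phi)=0$ with initial condition $\phi|_{t=0}=(x-y)\cdot\eta$, and the polyhomogeneous amplitude $a\sim\sum_{j\le 0} a_j$ is determined by the usual sequence of transport equations along the Hamiltonian flow of $p$. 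The leading amplitude satisfies $a_0|_{t=0}=1$, and the first subleading term $a_{-1}$ encodes the subprincipal symbol $p^s$ of $P$.

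Next I would compose with $B$. Using the standard composition formula together with the fact that $B$ is polyhomogeneous of order $0$, one obtains
\[
[E(t)B](x,y) = (2\pi)^{-n}\int e^{i\phi(t,x,y,\eta)}c(t,x,y,\eta)\,d\eta,
\]
where, on the diagonal $x=y$ and at $t=0$, the amplitude satisfies $c(0,y,y,\eta)=b(y,\eta)$, with subleading contributions coming from $b^s$ and from the first composition correction $\tfrac{1}{2i}\{b,p\}$ (i.e.\ the Weyl/half-density correction, which is precisely why the lemma is stated on half-densities). Restricting to $x=y$ gives the kernel $K(t,y)$ on the diagonal as an oscillatory integral in $\eta$ with phase $\phi(t,y,y,\eta)$ and amplitude $c(t,y,y,\eta)$.

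Then I would reduce the phase. By the eikonal equation,
\[
\phi(t,y,y,\eta) = -tp(y,\eta) + t^2 r(t,y,\eta),
\]
so for $|t|$ small we can write $\phi(t,y,y,\eta) = -t\tilde p(t,y,\eta)$ with $\tilde p$ smooth, homogeneous of degree one in $\eta$, and $\tilde p|_{t=0}=p$. Using $\tilde p$ as the new radial frequency coordinate and applying the coarea formula one writes
\[
K(t,y) = \int e^{-i\lambda t}\frac{\partial A(y,\lambda)}{\partial\lambda}\,d\lambda, \qquad
A(y,\lambda) = (2\pi)^{-n}\int_{\tilde p(t,y,\eta)<\lambda}c(t,y,y,\eta)\,d\eta.
\]
Expanding $\tilde p$ and $c$ in $t$ and collecting terms by homogeneity in $\lambda$ yields (\ref{eq:expansion2}): the first line is the leading $(b+b^s)$ contribution integrated over the $p<\lambda$ region, while the $\partial_\lambda$ line absorbs the first-order deformation of the integration region (giving $\tfrac12\{b,p\}$) together with the first transport correction (giving $p^s b$). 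Everything else is $S^{n-2}$ in $\lambda$.

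The main obstacle is the bookkeeping of the two independent sources of subprincipal corrections: the Weyl composition correction at the symbol level, which produces the $\tfrac12\{b,p\}$ term, and the next transport equation for the amplitude, which produces $p^s b$. Expanding phase and amplitude simultaneously to the right order, and tracking the Jacobian of the coarea change of variables, are needed to see that these corrections combine exactly into the form displayed in (\ref{eq:expansion2}) without spurious cross terms. Everything else is a routine oscillatory-integral computation, and the conormality of $K$ with respect to $\Delta\times\{0\}$ follows automatically from the FIO structure once the phase has been normalized to $-\lambda t$.
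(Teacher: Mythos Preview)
The paper does not give its own proof of this lemma: it simply records the statement and cites it as \cite[Proposition~29.1.2]{HormanderIV:1985}. So there is no ``paper's proof'' to compare against beyond the reference itself.

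Your sketch is the standard route and is essentially how H\"ormander argues in the cited proposition: construct the short-time FIO parametrix for $E(t)$ via the eikonal/transport hierarchy, compose with $B$, restrict to the diagonal, normalize the phase to $-\lambda t$ using $p$ as radial variable, and read off the symbol expansion of $A(y,\lambda)$ by homogeneity. Your identification of the two subprincipal sources---the transport correction contributing $p^{s}b$ and the composition/Jacobian correction contributing $\tfrac12\{b,p\}$---is correct, and the half-density formulation is exactly what makes the subprincipal bookkeeping come out cleanly. The only caveat is that your write-up leaves the actual verification of the coefficients at the level of ``collecting terms by homogeneity''; to turn this into a self-contained proof you would need to carry out that computation explicitly (or, as the paper does, simply invoke H\"ormander).
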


Note that the second integral has lower order so the dominant term gives the
leading singularity. Applying this to our case gives the expected leading behavior.

\end{document}